\newtheorem{property}[theorem]{Property}
\newtheorem{Example}[theorem]{Example}
\newenvironment{example}{\begin{Example}\rm}{~~$\square$\end{Example}}
\newenvironment{example2}[1]{\begin{Example}[#1]\rm}{~~$\square$\end{Example}}
\newcommand{\Real}{\mathbb{R}}
\newcommand{\Complex}{\mathbb{C}}
\newcommand{\Tra}{^T} % Transpose
\newcommand{\Cct}{^\dagger} % Complex conjugate transpose
\newcommand{\V}[1]{{\bm{\mathbf{\MakeLowercase{#1}}}}} % vector
\newcommand{\VE}[2]{\MakeLowercase{#1}_{#2}} % vector element
\newcommand{\M}[1]{{\bm{\mathbf{\MakeUppercase{#1}}}}} % matrix
\newcommand{\T}[1]{\boldsymbol{\mathscr{\MakeUppercase{#1}}}} %tensor
\newcommand{\TE}[2]{\MakeLowercase{#1}_{#2}} % tensor element
\newcommand{\qtext}[1]{\quad\text{#1}\quad} % text with quads around it
\newcommand{\TA}{\T{A}}
\newcommand{\Vx}{\V{x}}
\newcommand{\Vy}{\V{y}}
\newcommand{\UB}{\Gamma} % Unit Ball
\newcommand{\US}{\Sigma} % Unit Sphere
\newcommand{\Rn}{\Real^n}
\newcommand{\RT}[2]{\Real^{[#1,#2]}} % space of real-valued tensors
\newcommand{\Perm}[1]{\Pi_{#1}}
\newenvironment{inlinemath}{$}{$}
\newcommand{\Sec}[1]{\hyperref[sec:#1]{\S\ref*{sec:#1}}} %section
\newcommand{\App}[1]{\hyperref[sec:#1]{Appendix~\ref*{sec:#1}}} %appendix
\newcommand{\Eqn}[1]{\hyperref[eq:#1]{(\ref*{eq:#1})}} %equation
\newcommand{\Fig}[1]{\hyperref[fig:#1]{Figure~\ref*{fig:#1}}} %figure
\newcommand{\Tab}[1]{\hyperref[tab:#1]{Table~\ref*{tab:#1}}} %table
\newcommand{\Thm}[1]{\hyperref[thm:#1]{Theorem~\ref*{thm:#1}}} %theorem
\newcommand{\Lem}[1]{\hyperref[lem:#1]{Lemma~\ref*{lem:#1}}} %lemma
\newcommand{\Prop}[1]{\hyperref[prop:#1]{Property~\ref*{prop:#1}}} %property
\newcommand{\Cor}[1]{\hyperref[cor:#1]{Corollary~\ref*{cor:#1}}} %corollary
\newcommand{\Def}[1]{\hyperref[def:#1]{Definition~\ref*{def:#1}}} %definition
\newcommand{\Alg}[1]{\hyperref[alg:#1]{Algorithm~\ref*{alg:#1}}} %algorithm
\newcommand{\Ex}[1]{\hyperref[ex:#1]{Example~\ref*{ex:#1}}} %example
\begin{document}
\title{Shifted Power Method for Computing Tensor Eigenpairs%
  \thanks{This work was funded by the applied mathematics program at the U.S.
    Department of Energy and by an Excellence Award from the Laboratory
    Directed Research \& Development (LDRD) program at Sandia National
    Laboratories. Sandia National Laboratories is a multiprogram
    laboratory operated by Sandia Corporation, a wholly owned subsidiary of Lockheed Martin Corporation,
    for the United States Department of Energy's National Nuclear Security
    Administration under contract DE-AC04-94AL85000.}}
\author{Tamara G. Kolda\footnotemark[2] \and Jackson R. Mayo\footnotemark[2]} 
\maketitle

\opt{draft}{
\centerline {\sc \today}
}

\renewcommand{\thefootnote}{\fnsymbol{footnote}}
\footnotetext[2]{Sandia National Laboratories, Livermore, CA. Email: \{tgkolda,jmayo\}@sandia.gov.}
\renewcommand{\thefootnote}{\arabic{footnote}}

\begin{abstract}
  Recent work on eigenvalues and eigenvectors for tensors of order $m \ge 3$
  has been motivated by
  applications in blind source separation, magnetic resonance imaging,
  molecular conformation, and more. In this paper, we consider methods
  for computing real symmetric-tensor eigenpairs of the form $\TA\Vx^{m-1} =
  \lambda \Vx$ subject to $\|\Vx\|=1$, which is closely related to
  optimal rank-1 approximation of a symmetric
  tensor.  Our contribution is a shifted symmetric higher-order
  power method (SS-HOPM), which we show is guaranteed to converge to a
  tensor eigenpair. SS-HOPM can be viewed as a generalization of the
  power iteration method for matrices or of the symmetric
  higher-order power method.  Additionally, using fixed point
  analysis, we can characterize exactly which eigenpairs can and
  cannot be found by the method. Numerical examples are presented,
  including examples from an extension of the method to finding
  complex eigenpairs.
\end{abstract}

\begin{keywords}
  tensor eigenvalues, E-eigenpairs, Z-eigenpairs, $l^2$-eigenpairs, rank-1 approximation, symmetric higher-order power method (S-HOPM), shifted symmetric higher-order power method (SS-HOPM) 
\end{keywords}
\opt{siam}{
\begin{AMS}
  15A18, % Eigenvalues, singular values, and eigenvectors
  15A69  % Multilinear algebra, tensor products
\end{AMS}
}

%% --------------------------------------------------
%% Headers
%% --------------------------------------------------
\pagestyle{myheadings}
\thispagestyle{plain}
\opt{draft}{
\markboth{\sc Draft --- \today}{\sc Draft --- \today}
}
\opt{arXiv,siam}{
\markboth{\sc T.~G.~Kolda and J.~R.~Mayo}{\sc Shifted Power Method for Computing Tensor Eigenpairs}
}

%% ----------------------------------------------------------------------
%% Introduction
%% ----------------------------------------------------------------------
\section{Introduction}
\label{sec:introduction}

Tensor eigenvalues and eigenvectors have received much attention lately in the
literature \cite{Li05,Qi05,QiSuWa07,Qi07,ChPeZh08,NgQiZh09,WaQiZh09}. The tensor eigenproblem is
important because it has applications in blind source separation
\cite{KoRe02}, magnetic resonance imaging \cite{ScSe08,QiWaWu08},
molecular conformation \cite{Di88}, etc.  There is more than one possible definition for a
tensor eigenpair \cite{Qi05}; in this paper, we
specifically use the following definition.
\begin{definition}
  \label{def:EVP}
  Assume that $\TA$ is a symmetric $m^\text{th}$-order $n$-dimensional
  real-valued tensor. For any $n$-dimensional vector $\Vx$, define
  \begin{equation}
    \label{eq:Axm1}
    \left( \TA\Vx^{m-1} \right)_{i_1} \equiv 
    \sum_{i_2=1}^n \cdots \sum_{i_m=1}^n
    \TE{a}{i_1 i_2 \cdots i_m} \VE{x}{i_2} \cdots \VE{x}{i_m}
    \qtext{for}
    i_1 = 1,\dots,n.
  \end{equation}
  Then $\lambda \in \Real$ is an
  \emph{eigenvalue} of $\TA$ if there exists $\Vx \in \Real^n$ such that
  \begin{equation}\label{eq:EVP}
    \TA\Vx^{m-1} = \lambda \Vx  \qtext{and} \Vx\Tra\Vx=1.
  \end{equation}%
  The vector $\Vx$ is a corresponding \emph{eigenvector}, and
  $(\lambda,\Vx)$ is called an \emph{eigenpair}. 
\end{definition}

\Def{EVP} is equivalent to the Z-eigenpairs defined by Qi
\cite{Qi05,Qi07} and the $l^2$-eigenpairs defined by Lim
\cite{Li05}. In particular, Lim \cite{Li05} observes that any
eigenpair $(\lambda,\Vx)$ is a Karush-Kuhn-Tucker (KKT) point 
(i.e., a constrained stationary point) of the nonlinear optimization
problem 
\begin{equation}\label{eq:NLP}
  \max_{\Vx \in \Rn}
  \TA\Vx^m
    \qtext{subject to}
    \Vx\Tra\Vx = 1,
  \qtext{where}
  \TA\Vx^m
\equiv \sum_{i_1=1}^n \cdots \sum_{i_m=1}^n
    \TE{a}{i_1 \cdots i_m} \VE{x}{i_1} \cdots \VE{x}{i_m}.
\end{equation}
This is equivalent to the problem of finding the best \emph{symmetric} rank-1
approximation of a symmetric tensor \cite{DeDeVa00a}.
We present the more general definition that incorporates complex-valued eigenpairs in \Sec{complex}.

In this paper, we build upon foundational work by Kofidis and Regalia
\cite{KoRe02} for solving \Eqn{NLP}. Their paper is extremely important 
for computing tensor eigenvalues even though it predates
the definition of the eigenvalue problem by three years. 
Kofidis and Regalia consider the higher-order power method
(HOPM) \cite{DeDeVa00a}, a well-known technique for approximation of
higher-order tensors, and show that its
symmetric generalization (S-HOPM) is not guaranteed to
converge. They go on, however, to use convexity theory to
provide theoretical 
results (as well as practical examples) explaining conditions under
which the method is convergent for even-order tensors (i.e., $m$
even). Further, these 
conditions are shown to hold for many problems of practical interest.

In the context of independent component analysis (ICA), both Regalia and Kofidis \cite{ReKo03} and 
Erdogen \cite{Er09} have developed shifted variants of the power 
method and shown that they are monotonically convergent.
We present a similar method in the context of finding real-valued tensor eigenpairs,
called the shifted symmetric higher-order power method (SS-HOPM),
along with theory showing that it is guaranteed to converge to
a constrained stationary point of \Eqn{NLP}. The proof is general and works for both odd-
and even-order tensors (i.e., all $m \ge 3$).  The effectiveness
of SS-HOPM is demonstrated on several examples, including a problem
noted previously \cite{KoRe02} for which S-HOPM does not converge.
We also present a version of SS-HOPM for finding complex-valued tensor
eigenpairs and provide examples of its effectiveness.

As mentioned, there is more than one definition of a tensor eigenpair.
In the case of the 
\emph{$l^m$-eigenpair} (we use $m$ for the tensor order instead of $k$ as in some references)
or \emph{H-eigenpair}, the eigenvalue equation becomes $\TA\Vx^{m-1} = \lambda
\Vx^{[m-1]}$, where $\Vx^{[m-1]}$ denotes the vector $\Vx$ with each
element raised to the $(m-1)^{\text{st}}$ power \cite{Li05,Qi05}.  In this context, Qi,
Wang, and Wang \cite{QiWaWa07} propose some methods specific to
third-order tensors ($m = 3$).  Unlike the ($l^2$-)eigenvalues we consider
here, it is possible to guarantee convergence to the \emph{largest}
$l^m$-eigenvalue for certain classes of nonnegative tensors. For
example, see the power methods proposed by Ng, Qi, and Zhou
\cite{NgQiZh09} and Liu, Zhou, and Ibrahim \cite{LiZhIb10}, the latter
of which also uses a shift to guarantee convergence for any
irreducible nonnegative tensor.

%% ----------------------------------------------------------------------
%% Preliminaries
%% ----------------------------------------------------------------------
\section{Preliminaries}

Throughout, let $\UB$ and $\US$ denote the
unit ball and sphere on $\Rn$, i.e.,
\begin{displaymath}
  \UB = \{ \Vx \in \Rn : \| \Vx \| \leq 1 \}
  \qtext{and}
  \US = \{ \Vx \in \Rn : \| \Vx \| = 1 \}.
\end{displaymath}
Additionally, define
\begin{displaymath}
  \Perm{m} \equiv \text{the set of all permutations of } (1,\dots,m).
\end{displaymath}
Let $\Vx \bot \Vy$ denote $\Vx\Tra\Vy = 0$, and define
$\Vx^{\bot} \equiv \{ \Vy \in \Rn : \Vx \bot \Vy \}$. Let
$\rho(\M{A})$ denote the spectral radius of a square matrix $\M{A}$, i.e., the maximum of the magnitudes of its eigenvalues.

%% --------------------------------------------------
%% Tensor Preliminaries
%% --------------------------------------------------
\subsection{Tensors}
\label{sec:tensors}

A tensor is an $m$-way array.
We let $\RT{m}{n}$ denote the space of $m^\text{th}$-order real-valued tensors
with dimension $n$, e.g., $\RT{3}{2} =
\Real^{2 \times 2 \times 2}$. We adopt the convention that $\RT{0}{n}
= \Real$.

We formally introduce the notion of a symmetric tensor, sometimes also
called supersymmetric, which is invariant under any permutation of its
indices. Further, we define 
a generalization of the tensor-vector multiplication
in equations \Eqn{Axm1} and \Eqn{NLP}.

\begin{definition}[Symmetric tensor \cite{CoGoLiMo08}]
  A tensor $\TA \in \RT{m}{n}$ is \emph{symmetric} if
  \begin{displaymath}
    \TE{a}{i_{p(1)} \cdots i_{p(m)}} = \TE{a}{i_1 \cdots i_m}
    \qtext{for all} i_1, \dots, i_m \in \{1,\dots,n\}
    \qtext{and} p \in \Perm{m}.
  \end{displaymath}
\end{definition}

\begin{definition}[Symmetric tensor-vector multiply]\label{def:mult}
  Let $\TA \in \RT{m}{n}$ be symmetric and $\Vx \in \Rn$.
  Then for $0 \leq r \leq m - 1$, the \emph{$(m-r)$-times product} of the tensor
  $\TA$ with the vector $\Vx$ is denoted by $\TA \Vx^{m-r} \in
  \RT{r}{n}$ and defined by
  \begin{displaymath}
    (\TA \Vx^{m-r})_{i_1 \cdots i_r} \equiv \sum_{i_{r+1}, \dots, i_m} 
    \TE{A}{i_1 \cdots i_m} \VE{x}{i_{r+1}} \cdots \VE{x}{i_m}
    \qtext{for all} i_1, \dots, i_r \in \{1,\dots,n\}.
  \end{displaymath}
\end{definition}

% Inspired by talking with Grey Ballard
\begin{example}
  The identity matrix plays an important role in matrix analysis. This
  notion can be extended in a sense to the domain of tensors. We may
  define an identity tensor as a symmetric tensor $\T{E} \in \RT{m}{n}$
  such that 
  \begin{displaymath}
    \T{E}\Vx^{m-1} = \Vx \qtext{for all} \Vx \in \US.
  \end{displaymath}
  We restrict $\Vx \in \US$ since it is not possible to have a tensor with $m > 2$
  such that the above equation 
  holds for all $\Vx \in \Rn$.  For any $\Vx \notin \US$, the
  above equation implies
  \begin{displaymath}
    \T{E}\Vx^{m-1} 
    = \| \Vx \|^{m-1} \T{E}(\Vx/\|\Vx\|)^{m-1} 
    = \| \Vx \|^{m-1} (\Vx/\|\Vx\|)
    = \| \Vx \|^{m-2} \Vx.
  \end{displaymath}
  Consider the case of $m=4$ and $n=2$. The system of equations that
  must be satisfied for all $\Vx \in \US$ is
  \begin{align*}
    \TE{e}{1111} \VE{x}{1}^3 + 3\TE{e}{1112} \VE{x}{1}^2 \VE{x}{2}
    + 3\TE{e}{1122} \VE{x}{1} \VE{x}{2}^2 + \TE{e}{1222} \VE{x}{2}^3 
    &= \VE{x}{1}, \\
    \TE{e}{1112} \VE{x}{1}^3 + 3\TE{e}{1122} \VE{x}{1}^2 \VE{x}{2}
    + 3\TE{e}{1222} \VE{x}{1} \VE{x}{2}^2 + \TE{e}{2222} \VE{x}{2}^3 
    &= \VE{x}{2}. 
  \end{align*}
  Consider $\Vx = \begin{bmatrix} 1 & 0 \end{bmatrix}\Tra$. This
  yields $\TE{e}{1111}=1$ and $\TE{e}{1112} = 0$. Similarly, $\Vx =
  \begin{bmatrix} 0 & 1 \end{bmatrix}\Tra$ yields $\TE{e}{2222}=1$ and
  $\TE{e}{1222}=0$. The only remaining unknown is $\TE{e}{1122}$, and
  choosing, e.g., $\Vx = \begin{bmatrix} \sqrt{2}/2 & \sqrt{2}/2
  \end{bmatrix}\Tra$ yields $\TE{e}{1122} = 1/3$. In summary, the
  identity tensor for $m=4$ and $n=2$ is
  \begin{displaymath}
    \TE{e}{ijkl} =
    \begin{cases}
      1 & \text{if } i = j = k = l, \\
      1/3 & \text{if } i = j \ne k = l, \\
      1/3 & \text{if } i = k \ne j = l, \\
      1/3 & \text{if } i = l \ne j = k, \\
      0 & \text{otherwise}.
    \end{cases}
  \end{displaymath}
  We generalize this idea in the next property.
\end{example}

\begin{property}
  For $m$ even, the identity tensor $\T{E} \in \RT{m}{n}$ satisfying
  $\T{E}\Vx^{m-1} = \Vx$ for all $\Vx \in \US$ is given by
  \begin{equation}\label{eq:identity}
    \TE{e}{i_1 \cdots i_m} = 
    \frac{1}{m!}
    \sum_{p \in \Perm{m}} 
    \delta_{i_{p(1)} i_{p(2)}} 
    \delta_{i_{p(3)} i_{p(4)}} 
    \cdots
    \delta_{i_{p(m-1)} i_{p(m)}}
  \end{equation}
  for  $i_1,\dots,i_m \in \{1,\dots,n\}$,
  where $\delta$ is the standard Kronecker delta, i.e.,
  \begin{displaymath}
    \delta_{ij} \equiv
    \begin{cases}
      1 & \text{if } i = j, \\
      0 & \text{if } i \neq j.
    \end{cases}
  \end{displaymath}
\end{property}%

This identity tensor appears in a previous work \cite{Qi05}, where it is denoted by $I_E$ and used
to define a generalization of the characteristic polynomial for symmetric even-order tensors.
  
\begin{example}
  There is no identity tensor for $m$ odd.
  This is seen because if $\T{E}\Vx^{m-1} = \Vx$ for some odd $m$ and some $\Vx \in \US$, then
  we would have $-\Vx \in \US$ but $\T{E}(-\Vx)^{m-1} = \Vx \ne -\Vx$.
\end{example}

% For any even-order tensor (i.e., $m$ even), observe that choosing
% $\varphi = \pi$ yields $\Vx' = -\Vx$ and $\lambda' = \lambda$. Thus,
% even though it seems that $\lambda$ has 2 distinct real
% eigenvectors, they are both members of the same equivalence class.
% %
% Likewise, for any odd-order tensor (i.e., $m$ odd), choosing
% $\varphi = \pi$ yields $\Vx' = -\Vx$ and $\lambda' = -\lambda$, so
% both $(\lambda,\Vx)$ and $(-\lambda, -\Vx)$ are members of the same
% equivalence class.

For any even-order tensor (i.e., $m$ even), observe that if $(\lambda,\Vx)$ is an eigenpair, then $(\lambda,-\Vx)$ is also an eigenpair since
\begin{displaymath}
  \TA(-\Vx)^{m-1} = -\TA\Vx^{m-1} = \lambda(-\Vx).
\end{displaymath}
Likewise, for any odd-order tensor (i.e., $m$ odd), $(-\lambda,-\Vx)$ is also an eigenpair since
\begin{displaymath}
    \TA(-\Vx)^{m-1} = \TA\Vx^{m-1} = (-\lambda)(-\Vx).
\end{displaymath}
These are \emph{not} considered to be distinct eigenpairs.

We later present, as \Thm{neigs}, a recently derived result \cite{CaSt10} that bounds the number of real eigenpairs by $((m-1)^n-1)/(m-2)$. We defer discussion of this result until \Sec{complex}, where we discuss complex eigenpairs.

Because the tensor eigenvalue equation for $m > 2$ amounts to a system of nonlinear
equations in the components of $\Vx$, a direct solution is
challenging. 
% A further complication is that
% the normalization condition $\Vx\Cct\Vx = 1$ is nonpolynomial due to the complex conjugation.
% The system, however, becomes polynomial if the normalization condition
% $\Vx\Tra\Vx = 1$ is temporarily adopted.
% Any such $\Vx$ can be rescaled to satisfy $\Vx\Cct\Vx = 1$.
% On the other hand, any complex eigenvector with $\Vx\Tra\Vx = 0$ will not be found,
% but these do not occur generically.
%
Numerical algorithms exist for finding all solutions of a system
of polynomial equations, but become computationally expensive for systems with many
variables (here, large $n$) and with high-order polynomials (here, large $m$).
A polynomial system solver (\texttt{NSolve}) using a Gr\"obner basis method is
available in Mathematica \cite{Wo08} and has been employed to generate a complete list
of eigenpairs for some of the examples in this paper.
The solver is instructed to find
all solutions $(\lambda, \Vx)$ of the system \Eqn{EVP}.
  % \begin{displaymath}
  %   \TA\Vx^{m-1} = \lambda \Vx  \qtext{and} \Vx\Tra\Vx=1.
  % \end{displaymath}
Redundant solutions with the opposite sign of $\Vx$ (for even $m$) or the opposite
signs of $\Vx$ and $\lambda$ (for odd $m$) are then eliminated. 
% The remaining solutions
% are rescaled to satisfy $\Vx\Tra\Vx = 1$. 
% For real solutions, this condition is already
% satisfied; other complex solutions are transformed to a representative of the eigenring with
% positive real $\lambda$ by setting
% \begin{displaymath}
% (\lambda, \Vx) \leftarrow \left(\frac{|\lambda|}{(\Vx\Cct\Vx)^{m/2-1}},\, \left(\frac{|\lambda|}{\lambda}\right)^{1/(m-2)}\! \frac{\Vx}{(\Vx\Cct\Vx)^{1/2}}\right).
% \end{displaymath}

%% --------------------------------------------------
%% Convexity theory
%% --------------------------------------------------
\subsection{Convex functions}
Convexity theory plays an important role in our analysis. 
Here we recall two important properties of convex functions
\cite{BoVa04}.

\begin{property}[Gradient of convex function]
  \label{prop:convex_gradient}
  A differentiable function $f : \Omega \subseteq \Rn \rightarrow \Real$ is
  convex if and only if $\Omega$ is a convex set and
  $f(\Vy) \geq f(\Vx) + \nabla f(\Vx)\Tra (\Vy-\Vx)$
  for all $\Vx,\Vy \in \Omega$.
\end{property}%

\begin{property}[Hessian of convex function]
  \label{prop:convex_hessian}
  A twice differentiable function $f : \Omega \subseteq \Rn \rightarrow \Real$ is
  convex if and only if $\Omega$ is a convex set and the Hessian\footnote
  {By $\nabla^2$ we denote the Hessian matrix and not its trace, the Laplacian.}
  of $f$ is positive semidefinite on $\Omega$, i.e., 
  $\nabla^2 f(\Vx) \succeq 0$
  for all $\Vx \in \Omega$.
\end{property}

We prove an interesting fact about convex functions on vectors of
unit norm that will prove useful in our later analysis. This fact is
implicit in a proof given previously \cite[Theorem 4]{KoRe02} and
explicit in \cite[Theorem 1]{ReKo03}.
\begin{theorem}[Kofidis and Regalia \cite{KoRe02,ReKo03}]
  \label{thm:cvx}
  Let $f$ be a function that is convex and continuously differentiable
  on $\UB$.  %
  Let $\V{w} \in \US$ with $\nabla f(\V{w}) \neq \V{0}$. If
  $\V{v} = \nabla f(\V{w}) / \|\nabla f(\V{w}) \| $ $\ne \V{w}$, then
  \begin{inlinemath}
    f(\V{v}) - f(\V{w}) > 0.
  \end{inlinemath}
\end{theorem}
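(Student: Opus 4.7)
The plan is to apply the gradient characterization of convexity (\Prop{convex_gradient}) directly, and then reduce the resulting inner product to a Cauchy--Schwarz inequality on the unit sphere.

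First I would set $\alpha = \|\nabla f(\V{w})\| > 0$ so that by definition $\nabla f(\V{w}) = \alpha \V{v}$, noting in particular that $\V{v} \in \US$. Since $f$ is convex and continuously differentiable on the convex set $\UB$, \Prop{convex_gradient} applied at the points $\V{w}$ and $\V{v}$ (both of which lie in $\UB$) yields
\begin{equation*}
  f(\V{v}) - f(\V{w}) \;\geq\; \nabla f(\V{w})\Tra (\V{v} - \V{w}) \;=\; \alpha\, \V{v}\Tra \V{v} - \alpha\, \V{v}\Tra \V{w} \;=\; \alpha\,(1 - \V{v}\Tra \V{w}).
\end{equation*}

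Next I would argue that $\V{v}\Tra \V{w} < 1$. By the Cauchy--Schwarz inequality, $\V{v}\Tra \V{w} \leq \|\V{v}\|\,\|\V{w}\| = 1$, with equality holding only when $\V{v}$ and $\V{w}$ are parallel with the same sign; since both are unit vectors, equality would force $\V{v} = \V{w}$. The hypothesis $\V{v} \neq \V{w}$ therefore gives the strict inequality $\V{v}\Tra \V{w} < 1$, so $\alpha(1 - \V{v}\Tra \V{w}) > 0$, completing the proof.

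The argument is essentially two lines and I do not anticipate any real obstacle. The only subtlety worth flagging is ensuring that the gradient inequality is applied with both $\V{w}$ and $\V{v}$ inside the domain $\UB$ on which $f$ is assumed convex — this is automatic since $\US \subset \UB$ — and observing that the role of the normalization is precisely to replace $\nabla f(\V{w})$ by a positive multiple of a unit vector, which is what makes the Cauchy--Schwarz step tight only at $\V{v}=\V{w}$.
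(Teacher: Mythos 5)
Your proof is correct and follows essentially the same route as the paper: apply the gradient inequality from \Prop{convex_gradient} and then show that $\nabla f(\V{w})\Tra\V{v} > \nabla f(\V{w})\Tra\V{w}$, which the paper phrases as ``$\V{z}\Tra\Vx$ is strictly maximized on $\US$ by $\V{z}/\|\V{z}\|$'' and you phrase via the equality case of Cauchy--Schwarz. The two arguments are the same in substance, with yours merely making the strict-inequality step slightly more explicit.
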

\begin{proof}
  For arbitrary nonzero $\V{z} \in \Rn$, $\V{z}\Tra\V{x}$ is strictly maximized for
  $\Vx \in \US$ by $\V{x} = \V{z} / \| \V{z}\|$. 
  Substituting $\V{z} = \nabla f(\V{w})$, it follows that 
  \begin{inlinemath}
    \nabla f(\V{w})\Tra \V{v} > \nabla f(\V{w})\Tra \V{w}, 
  \end{inlinemath}
  since $\V{v} = \nabla f(\V{w}) / \|\nabla f(\V{w}) \| \ne \V{w}$ and $\V{w}
  \in \US$.
  By the convexity of $f$ on $\UB$ and \Prop{convex_gradient}, we have
  \begin{inlinemath}
    f(\V{v}) \geq f(\V{w}) + \nabla f(\V{w})\Tra(\V{v} - \V{w})
  \end{inlinemath}
  for all $\V{v},\V{w} \in \UB$. Consequently,
  \begin{inlinemath}
    f(\V{v}) - f(\V{w}) \geq \nabla f(\V{w})\Tra(\V{v} - \V{w}) > 0.
  \end{inlinemath}
\end{proof}

%% --------------------------------------------------
%% Constrained optimization
%% --------------------------------------------------
\subsection{Constrained optimization}

Here we extract relevant theory from constrained optimization
\cite{NoWr99}.

\begin{theorem}
  \label{thm:nlp}
  Let $f:\Rn \rightarrow \Real$ be continuously differentiable.
  A point $\Vx_* \in \US$ is a (constrained) stationary point of
  \begin{displaymath}
    \max f(\Vx) \qtext{subject to} \Vx \in \US
  \end{displaymath}
  if there exists $\mu_* \in \Real$ such that
  \begin{inlinemath}
    \nabla f(\Vx_*) + \mu_* \Vx_* = \V{0}.
  \end{inlinemath}
  The point $\Vx_*$ is a (constrained) isolated local maximum if,
  additionally,  
  \begin{displaymath}
    \V{w}\Tra (\nabla^2 f(\Vx_*) + \mu_* \M{I})\V{w} < 0
    \qtext{for all} \V{w} \in \US \cap \Vx_*^{\bot}.
  \end{displaymath}
\end{theorem}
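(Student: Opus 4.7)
The plan is to treat this statement as a direct specialization of the standard Lagrange multiplier theorems for equality-constrained optimization (see, e.g., Theorems~12.1 and 12.6 in \cite{NoWr99}). The feasible set $\US$ is defined by the single smooth equality constraint $g(\Vx) := \Vx\Tra\Vx - 1 = 0$. Since $\nabla g(\Vx) = 2\Vx$ is nonzero everywhere on $\US$, the linear independence constraint qualification is automatically satisfied, so no additional regularity hypothesis is needed and the general KKT machinery applies directly.

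For the first-order part, I would form the Lagrangian
\begin{displaymath}
  L(\Vx,\mu) = f(\Vx) + \tfrac{\mu}{2}(\Vx\Tra\Vx - 1)
\end{displaymath}
and invoke the standard KKT necessary condition: any constrained stationary point $\Vx_*$ must satisfy $\nabla_\Vx L(\Vx_*,\mu_*) = \V{0}$ for some $\mu_* \in \Real$, which is exactly $\nabla f(\Vx_*) + \mu_* \Vx_* = \V{0}$.

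For the isolated-local-maximum condition, the strategy is to compute the Lagrangian Hessian with respect to $\Vx$, namely $\nabla_\Vx^2 L(\Vx_*,\mu_*) = \nabla^2 f(\Vx_*) + \mu_* \M{I}$, and then apply the standard second-order sufficient condition that strict negative definiteness on the tangent space of the constraint manifold at $\Vx_*$ guarantees an isolated local maximum. That tangent space is the kernel of $\nabla g(\Vx_*)\Tra = 2\Vx_*\Tra$, i.e., $\Vx_*^\bot$; by homogeneity of the quadratic form in the tangent vector, one may restrict to unit-length directions $\V{w} \in \US \cap \Vx_*^\bot$, yielding the stated inequality.

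The main ``obstacle'' here is essentially bookkeeping rather than mathematics: one must identify the unit-sphere constraint with the general framework, confirm the sign convention on the multiplier, and verify that the tangent space to $\US$ at $\Vx_*$ coincides with $\Vx_*^\bot$. No genuinely new argument is required beyond what is standard in \cite{NoWr99}, so the proof can be presented either as a short verification of these three items or simply as a citation.
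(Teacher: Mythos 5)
Your proposal is correct and follows essentially the same route as the paper: both form the Lagrangian for the single equality constraint $\tfrac{1}{2}(\Vx\Tra\Vx-1)=0$, read off its gradient and Hessian, and cite the standard first-order KKT and second-order sufficient conditions of \cite{NoWr99} (Theorems~12.1 and~12.6). Your explicit check of the constraint qualification and the identification of the tangent space with $\Vx_*^{\bot}$ are minor elaborations the paper leaves implicit, not a different argument.
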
%
\begin{proof}
  The constraint $\Vx \in \US$ can be expressed as $c(\Vx) =
  \frac{1}{2} (\Vx\Tra\Vx - 1) = 0$. The Lagrangian for the
  constrained problem is then given by
  \begin{displaymath}
    \mathcal{L}(\Vx,\mu) = f(\Vx) + \mu c(\Vx).
  \end{displaymath}
  Its first and second derivatives with respect to $\Vx$ are
  \begin{displaymath}
    \nabla \mathcal{L}(\Vx,\mu) = \nabla f(\Vx) + \mu\Vx
    \qtext{and}
    \nabla^2 \mathcal{L}(\Vx,\mu) = \nabla^2 f(\Vx) + \mu \M{I}.
  \end{displaymath}
  By assumption, $\nabla \mathcal{L}(\Vx_*,\mu_*) = \V{0}$ and
  $c(\Vx_*) = 0$. Therefore, the pair $(\Vx_*,\mu_*)$
  satisfies the Karush-Kuhn-Tucker (KKT) conditions \cite[Theorem
  12.1]{NoWr99} and so is a constrained stationary point.  It is
  additionally a constrained isolated local maximum if it meets the second-%
  order sufficient condition \cite[Theorem 12.6]{NoWr99}.
\end{proof}

%% --------------------------------------------------
%% Fixed point theory
%% --------------------------------------------------
\subsection{Fixed point theory}

We consider the properties of iterations of the form
\begin{displaymath}
  \Vx_{k+1} = \phi(\Vx_k).
\end{displaymath}
Under certain conditions, the iterates are guaranteed to converge
to a fixed point. In particular, we are interested in ``attracting''
fixed points. 

\begin{definition}[Fixed point]
  A point $\Vx_* \in \Rn$ is a \emph{fixed point} of $\phi: \Rn
  \rightarrow \Rn$ if $\phi(\Vx_*) = \Vx_*$. 
  Further, $\Vx_*$ is an \emph{attracting} fixed point if there exists
  $\delta > 0$ such that the sequence $\{\Vx_k\}$ defined by
  $\Vx_{k+1} = \phi(\Vx_k)$ converges to $\Vx_*$ for any $\Vx_0$
  such that $\|\Vx_0 - \Vx_*\| \leq \delta$.
\end{definition}

\begin{theorem}[{\cite[Theorem 2.8]{Rh74}}]
  \label{thm:fixed_point}
  Let $\Vx_* \in \Rn$ be a fixed point of $\phi: \Rn
  \rightarrow \Rn$, and let $J:\Rn \rightarrow \Real^{n \times
    n}$ be the Jacobian of $\phi$. Then $\Vx_*$ is an attracting
  fixed point if $\sigma \equiv \rho(J(\Vx_*)) < 1$; further, if
  $\sigma > 0$, then the
  convergence of $\Vx_{k+1} = \phi(\Vx_k)$ to $\Vx_*$ is linear with rate $\sigma$.
\end{theorem}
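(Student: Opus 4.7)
The plan is to linearize the iteration near $\Vx_*$ via Taylor's theorem and then convert a statement about the spectral radius of $J(\Vx_*)$ into a statement about a suitable matrix norm. Write $\V{e}_k = \Vx_k - \Vx_*$ and $J_* = J(\Vx_*)$. Since $\phi$ is differentiable at $\Vx_*$ and $\phi(\Vx_*)=\Vx_*$, Taylor expansion yields
\begin{equation*}
\V{e}_{k+1} = \phi(\Vx_*+\V{e}_k) - \Vx_* = J_*\,\V{e}_k + \V{r}(\V{e}_k),
\qtext{where} \|\V{r}(\V{e})\|/\|\V{e}\| \to 0 \qtext{as} \V{e} \to \V{0}.
\end{equation*}
Thus everything reduces to controlling $J_*$ together with a negligible remainder.

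The key technical ingredient I would invoke is the classical result of Householder that for every matrix $\M{A}$ and every $\varepsilon>0$ there exists a vector norm $\|\cdot\|_*$ on $\Rn$ whose induced operator norm satisfies $\|\M{A}\|_* \le \rho(\M{A}) + \varepsilon$. Applying this to $J_*$, fix $\varepsilon>0$ with $\sigma+2\varepsilon<1$ and pick a norm $\|\cdot\|_*$ with $\|J_*\|_* \le \sigma+\varepsilon$. Using the smallness of the remainder, choose $\delta>0$ small enough that whenever $\|\V{e}\|_* \le \delta$ we have $\|\V{r}(\V{e})\|_* \le \varepsilon\|\V{e}\|_*$. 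Then, for any starting point with $\|\V{e}_0\|_* \le \delta$, induction gives
\begin{equation*}
\|\V{e}_{k+1}\|_* \;\le\; \|J_*\|_* \|\V{e}_k\|_* + \|\V{r}(\V{e}_k)\|_* \;\le\; (\sigma+2\varepsilon)\,\|\V{e}_k\|_*,
\end{equation*}
so $\|\V{e}_k\|_* \to 0$ geometrically and the iterates remain in the $\|\cdot\|_*$-ball of radius $\delta$. Since all norms on $\Rn$ are equivalent, $\Vx_k \to \Vx_*$ in the Euclidean norm as well, and by taking $\delta$ small enough in the Euclidean norm initially, $\Vx_*$ is attracting.

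For the linear-rate claim (when $\sigma>0$), I would repeat the construction with an arbitrary $\varepsilon>0$: the same argument shows $\|\V{e}_{k+1}\|_* \le (\sigma+2\varepsilon)\|\V{e}_k\|_*$ for all sufficiently large $k$, hence $\limsup_{k\to\infty}\|\V{e}_{k+1}\|_*/\|\V{e}_k\|_* \le \sigma+2\varepsilon$. Letting $\varepsilon\downarrow 0$ yields $\limsup_{k\to\infty}\|\V{e}_{k+1}\|_*/\|\V{e}_k\|_* \le \sigma$, and the equivalence of norms transfers this bound to the Euclidean norm, proving linear convergence with rate $\sigma$. The main obstacle is the Householder norm-construction step, since one cannot generally take $\|J_*\| \le \sigma$ in a fixed norm (e.g.\ the spectral radius of a nonsymmetric matrix is typically strictly less than $\|J_*\|_2$); I would either cite Householder's theorem directly or, as an alternative, argue through Jordan canonical form by conjugating $J_*$ into a block form in which the super-diagonal entries are rescaled to $\varepsilon$, which induces an equivalent norm with the desired bound.
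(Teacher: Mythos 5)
The paper does not prove this statement at all --- it is imported verbatim from Rheinboldt \cite{Rh74} as background --- so there is no internal proof to compare against. Your argument is the standard textbook proof of this Ostrowski-type attraction theorem: linearize at the fixed point, pass to an $\varepsilon$-adapted operator norm with $\|J_*\|_* \le \sigma + \varepsilon$ (Householder's lemma, or equivalently the rescaled Jordan form you mention), and run a contraction induction on a small ball, transferring back to the Euclidean norm at the end. The attraction part of this is sound and complete. It is worth noting that for the application actually made in this paper the adapted-norm detour is unnecessary, since the Jacobian $J(\Vx;\alpha)$ in \Eqn{J} is symmetric, so $\rho(J) = \|J\|_2$ and the ordinary Euclidean norm is already contractive.

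Two caveats on the rate claim. First, you only establish an upper bound: convergence at least as fast as linear with factor $\sigma$. The assertion that the convergence is linear \emph{with rate} $\sigma$ also requires that it be no faster, which needs $\V{e}_0$ to have a nonzero component in the dominant spectral subspace of $J_*$; a starting error confined to the subspace of smaller eigenvalues converges faster, so the sharp statement is about the worst case over admissible starting points (this is how the R-convergence factor of the process is defined in Rheinboldt's framework). Second, the closing step ``the equivalence of norms transfers this bound to the Euclidean norm'' is too quick for the per-step ratio: the equivalence constants of $\|\cdot\|_*$ degenerate as $\varepsilon \downarrow 0$ when $J_*$ is defective, so $\limsup_k \|\V{e}_{k+1}\|/\|\V{e}_k\| \le \sigma$ in the Euclidean norm does not follow. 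What does transfer is the geometric envelope $\|\V{e}_k\| \le M_\varepsilon (\sigma+2\varepsilon)^k$, hence $\limsup_k \|\V{e}_k\|^{1/k} \le \sigma$, which is the root-convergence (R-linear) sense of the theorem. Neither caveat affects the attraction claim, which is the part the paper relies on in \Thm{fp}.
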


This condition on the Jacobian for an attracting fixed point is sufficient but not necessary. In
particular, if $\sigma \equiv \rho(J(\Vx_*)) = 1$, then $\Vx_*$ may or may not be
attracting, but there is no neighborhood of \emph{linear} convergence to it.
For $\sigma < 1$, the rate of linear convergence depends on $\sigma$ and is
slower for $\sigma$ values closer to 1.
On the other hand, for $\sigma > 1$, an attractor is ruled out by the following.

\begin{theorem}[{\cite[Theorem 1.3.7]{StHu98}}]
  \label{thm:unstable_fixed_point}
  Let $\Vx_* \in \Rn$ be a fixed point of $\phi: \Rn
  \rightarrow \Rn$, and let $J:\Rn \rightarrow \Real^{n \times
    n}$ be the Jacobian of $\phi$. Then $\Vx_*$ is an unstable fixed point
  if $\sigma \equiv \rho(J(\Vx_*)) > 1$.
\end{theorem}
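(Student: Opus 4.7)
The plan is to linearize the iteration $\Vx_{k+1} = \phi(\Vx_k)$ about $\Vx_*$ and extract an unstable direction from the spectral decomposition of $L := J(\Vx_*)$. Writing $\V{h}_k = \Vx_k - \Vx_*$, differentiability of $\phi$ at $\Vx_*$ gives
\begin{displaymath}
  \V{h}_{k+1} = L\V{h}_k + R(\V{h}_k), \qtext{with} \|R(\V{h})\|/\|\V{h}\| \to 0 \text{ as } \|\V{h}\| \to 0.
\end{displaymath}
Fix $\tau$ with $1 < \tau < \sigma$. Because $\rho(L) = \sigma > 1$, the sum $E^u$ of the generalized real eigenspaces of $L$ for eigenvalues of modulus strictly greater than $1$ is nontrivial; let $E^{cs}$ be the complementary $L$-invariant subspace and let $P_u, P_{cs}$ denote the associated spectral projections. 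A standard Jordan rescaling produces a norm $\|\cdot\|_*$ on $\Rn$, equivalent to the Euclidean norm, such that $\|L\V{v}\|_* \ge \tau \|\V{v}\|_*$ for $\V{v} \in E^u$ and $\|L\V{w}\|_* \le (1 + \eta_0) \|\V{w}\|_*$ for $\V{w} \in E^{cs}$, where $\eta_0 > 0$ is any preassigned small number.

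Next I would establish a cone-invariance estimate. Define the cone $\mathcal{C} = \{\V{h} \in \Rn : \|P_{cs}\V{h}\|_* \le \|P_u\V{h}\|_*\}$ and, given $\eta > 0$, pick $\delta > 0$ so that $\|R(\V{h})\|_* \le \eta \|\V{h}\|_*$ whenever $\|\V{h}\|_* \le \delta$. Splitting $L\V{h} + R(\V{h})$ via $P_u$ and $P_{cs}$, and using that $\V{h} \in \mathcal{C}$ implies $\|\V{h}\|_* \le 2 \|P_u\V{h}\|_*$, a direct estimate shows that for $\eta_0$ and $\eta$ chosen small enough relative to $\tau - 1$ one has $L\V{h} + R(\V{h}) \in \mathcal{C}$ whenever $\V{h} \in \mathcal{C}$ with $\|\V{h}\|_* \le \delta$, together with $\|P_u(L\V{h} + R(\V{h}))\|_* \ge \tau' \|P_u\V{h}\|_*$ for some $\tau' > 1$. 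Choosing a nonzero $\V{h}_0 \in E^u$ with $\|\V{h}_0\|_*$ arbitrarily small then puts every iterate in $\mathcal{C}$ with $\|P_u \V{h}_k\|_* \ge (\tau')^k \|\V{h}_0\|_*$ as long as the orbit stays within the $\delta$-ball in $\|\cdot\|_*$. Since $\|\V{h}_k\|_* \ge \|P_u\V{h}_k\|_*$ grows without bound, the orbit must exit this ball after finitely many steps, which is exactly the instability of $\Vx_*$.

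The main obstacle is the cone-invariance step. Eigenvalues of $L$ of modulus exactly $1$ (especially with nontrivial Jordan blocks) prevent contracting $E^{cs}$ strictly, so the margin $\tau - 1 - \eta_0 - O(\eta)$ that drives expansion inside the cone is slim; the parameters $\eta_0$, $\eta$, and $\delta$ must be chosen in the right order, using only the positive slack $\tau - 1$ inherited from the hypothesis $\sigma > 1$. Once the cone is forward invariant under the map $\V{h} \mapsto L\V{h} + R(\V{h})$ and the $P_u$-expansion inside it exceeds $1$, escape from every fixed small neighborhood of $\Vx_*$ is automatic.
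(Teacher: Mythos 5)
The paper does not prove this statement at all: it is quoted verbatim as Theorem~1.3.7 of Stuart and Humphries \cite{StHu98} and used as a black box, so there is no internal proof to compare against. Your cone-invariance argument is the standard self-contained proof of the linearized instability criterion and is essentially sound: the spectral splitting into $E^u$ and $E^{cs}$, the $\epsilon$-Jordan rescaling giving expansion at least $\tau>1$ on $E^u$ and growth at most $1+\eta_0$ on $E^{cs}$ (which correctly handles modulus-one eigenvalues with nontrivial Jordan blocks), the forward invariance of the cone $\|P_{cs}\V{h}\|_*\le\|P_u\V{h}\|_*$ for the perturbed map once $\eta_0+O(\eta)<\tau-1$, and the resulting geometric growth of $\|P_u\V{h}_k\|_*$ forcing escape from a fixed ball, together establish Lyapunov instability from initial conditions arbitrarily close to $\Vx_*$. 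One small inaccuracy: you fix $\tau$ with $1<\tau<\sigma$, but if $E^u$ is the sum of \emph{all} generalized eigenspaces with modulus exceeding $1$, the guaranteed expansion rate is governed by $\mu=\min\{|\lambda|:|\lambda|>1\}$, which may be much smaller than $\sigma$; the claim $\|L\V{v}\|_*\ge\tau\|\V{v}\|_*$ on $E^u$ therefore requires $\tau<\mu$, not merely $\tau<\sigma$. Since the argument only needs the existence of \emph{some} $\tau>1$, this is a harmless misstatement rather than a gap, but it should be corrected. You also implicitly use $\|P_u\|_*\le1$ when writing $\|\V{h}_k\|_*\ge\|P_u\V{h}_k\|_*$; choosing the adapted norm as the maximum of norms on the two factors makes this explicit. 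What your approach buys over the paper's is a complete, elementary proof requiring only differentiability of $\phi$ at $\Vx_*$; what the citation buys the authors is brevity, since the result is classical.
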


%% ----------------------------------------------------------------------
%% SHOPM
%% ----------------------------------------------------------------------
\section{Symmetric higher-order power method (S-HOPM)}
\label{sec:shopm}

We review the symmetric higher-order power method (S-HOPM), introduced
by De~Lathauwer et al.\@~\cite{DeDeVa00a} and analyzed further by
Kofidis and Regalia \cite{KoRe02}.
The purpose of S-HOPM is to solve the optimization problem
\begin{equation}
  \max_{\Vx \in \Rn} |\TA\Vx^m| \qtext{subject to} 
  \Vx \in \US.
\end{equation}
The solution of this problem will be a solution of either the following
maximization problem (lacking the absolute value) or its opposite
minimization problem:
\begin{equation}
  \label{eq:f}
  \max_{\Vx \in \Rn} f(\Vx) \qtext{subject to} 
  \Vx \in \US, \qtext{where} f(\Vx) = \TA\Vx^m.
\end{equation}
Setting $\lambda = f(\Vx)$, these problems are equivalent to finding the best
symmetric rank-1 approximation of a symmetric tensor $\TA \in
\RT{m}{n}$, i.e., 
\begin{equation}
  \label{eq:rank_one}
  \min_{\lambda, \Vx}
  \| \TA - \T{B} \|
  \qtext{subject to} 
  \TE{b}{i_1 \dots i_m} = \lambda \VE{x}{i_1} \cdots \VE{x}{i_m}
  \qtext{and}
  \Vx \in \US.
\end{equation}
Details of the connection
between \Eqn{f} and \Eqn{rank_one} are available elsewhere \cite{DeDeVa00a}.
The S-HOPM algorithm is shown in \Alg{shopm}. We discuss its connection to
the eigenvalue problem in \Sec{f} and its convergence properties in
\Sec{shopm_analysis}. 

% -------------------------
\begin{algorithm}
  \caption{Symmetric higher-order power method (S-HOPM) \cite{DeDeVa00a,KoRe02}}
  \label{alg:shopm}
    Given a symmetric tensor $\TA \in \RT{m}{n}$.
  \begin{algorithmic}[1]
    \Require $\Vx_0 \in \Rn$ with $\| \Vx_0 \| = 1$. Let
    $\lambda_0 = \TA \Vx_0^{m}$.
    \For{$k=0,1,\dots$}
    \State $\hat \Vx_{k+1} \gets \TA \Vx_k^{m-1}$
    \State $\Vx_{k+1} \gets \hat \Vx_{k+1} / \| \hat \Vx_{k+1} \|$
    \State $\lambda_{k+1} \gets \TA \Vx_{k+1}^{m}$
    \EndFor
  \end{algorithmic}
\end{algorithm}
% -------------------------

%% --------------------------------------------------
%% f(x)
%% --------------------------------------------------
\subsection{Properties of $f(\Vx) = \TA\Vx^m$}
\label{sec:f}

The function $f(\Vx) = \TA\Vx^m$ plays an important role in the
analysis of eigenpairs of $\TA$ because all eigenpairs are constrained
stationary points of $f$, as we show below.

We first need to derive the gradient of $f$.
This result is perhaps generally well known \cite[Equation
4]{Li05}, but here we 
provide a proof.

\begin{lemma}\label{lem:g}
  Let $\TA \in \RT{m}{n}$ be symmetric. The gradient of
  $f(\Vx) = \TA\Vx^m$ is
  \begin{equation}
    \label{eq:g}
    g(\Vx) \equiv \nabla f(\Vx) = m \, \TA \Vx^{m-1} \in \Rn.
  \end{equation}
\end{lemma}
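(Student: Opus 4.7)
The plan is to compute the partial derivative $\partial f/\partial \VE{x}{j}$ directly from the explicit polynomial expression for $f$ and then use the symmetry of $\TA$ to collapse the resulting sum into $m$ copies of $(\TA\Vx^{m-1})_j$.

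First I would expand
\begin{displaymath}
  f(\Vx) = \TA\Vx^m = \sum_{i_1=1}^n \cdots \sum_{i_m=1}^n \TE{a}{i_1 \cdots i_m}\, \VE{x}{i_1} \cdots \VE{x}{i_m},
\end{displaymath}
which is a homogeneous polynomial of degree $m$ in $\VE{x}{1},\dots,\VE{x}{n}$. Differentiating term by term with the product rule, the derivative with respect to $\VE{x}{j}$ produces, for each ordered tuple $(i_1,\dots,i_m)$, a sum over the $m$ positions $k$ at which we ``peel off'' a factor of $\VE{x}{i_k}$ (with the Kronecker condition $i_k = j$):
\begin{displaymath}
  \frac{\partial f}{\partial \VE{x}{j}}(\Vx)
  = \sum_{k=1}^m \sum_{i_1,\dots,i_m} \TE{a}{i_1 \cdots i_m}\, \delta_{i_k j}
  \prod_{\ell \neq k} \VE{x}{i_\ell}.
\end{displaymath}

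Next I would invoke the symmetry of $\TA$ to argue that each of the $m$ inner sums is equal. For a fixed $k$, relabel the summation indices by the permutation that swaps positions $1$ and $k$: by the symmetry property $\TE{a}{i_{p(1)} \cdots i_{p(m)}} = \TE{a}{i_1 \cdots i_m}$, this sum equals the corresponding sum for $k=1$, namely
\begin{displaymath}
  \sum_{i_2,\dots,i_m} \TE{a}{j\, i_2 \cdots i_m}\, \VE{x}{i_2} \cdots \VE{x}{i_m}
  = (\TA\Vx^{m-1})_j,
\end{displaymath}
using \Def{mult} with $r = 1$. Summing over $k=1,\dots,m$ gives $\partial f/\partial \VE{x}{j} = m(\TA\Vx^{m-1})_j$, and stacking these partial derivatives into a vector yields $\nabla f(\Vx) = m \TA\Vx^{m-1}$ as claimed.

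There is no real obstacle here beyond careful bookkeeping of the index relabeling; the only point to state precisely is the symmetry-based identification of the $m$ sums, which is what makes the factor of $m$ appear (just as in the scalar identity $\frac{d}{dx} x^m = m x^{m-1}$).
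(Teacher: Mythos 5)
Your proposal is correct and is essentially the same argument as the paper's proof: expand $f$ as a degree-$m$ polynomial, apply the product rule to get a sum over the $m$ positions where a factor of $\Vx$ is differentiated, and use the symmetry of $\TA$ to relabel indices so that each of the $m$ terms equals $(\TA\Vx^{m-1})_j$. The only differences are notational (you index the component by $j$ and the position by $k$, where the paper uses $k$ and $q$).
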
%
\begin{proof}
We use the basic relation
\begin{inlinemath}
\nabla_k \VE{x}{j} = \delta_{jk}.
\end{inlinemath}
Applying the product rule to \Eqn{f}, we find
\begin{displaymath}
\nabla_k f(\Vx) = \sum_{i_1,\dots,i_m} \sum_{q=1}^m \TE{A}{i_1 i_2\cdots i_m} \VE{x}{i_1} \VE{x}{i_2}\cdots \VE{x}{i_{q-1}}\delta_{i_q k} \VE{x}{i_{q+1}}\cdots \VE{x}{i_m}.
\end{displaymath}
Upon bringing the sum over $q$ to the outside, we observe that for each $q$ the dummy indices $i_1$ and $i_q$ can be interchanged (without affecting the symmetric tensor $\TA$), and the result is independent of $q$:
\begin{displaymath}
\begin{split}
\nabla_k f(\Vx) &= \sum_{q=1}^m \sum_{i_1,\dots,i_m} \TE{A}{i_1 i_2\cdots i_m} \delta_{i_1 k} \VE{x}{i_2}\cdots \VE{x}{i_{q-1}} \VE{x}{i_q} \VE{x}{i_{q+1}}\cdots \VE{x}{i_m}\\
&= \sum_{q=1}^m \sum_{i_2,\dots,i_m} \TE{A}{k i_2\cdots i_m} \VE{x}{i_2}\cdots \VE{x}{i_m}\\
&= m (\TA \Vx^{m-1})_k.
\end{split}
\end{displaymath}
Hence, $\nabla f(\Vx) = m \, \TA \Vx^{m-1}$.
\end{proof}

\begin{theorem}\label{thm:equiv}
  Let $\TA \in \RT{m}{n}$ be symmetric. Then $(\lambda,\Vx)$ is an
  eigenpair of $\TA$ if and only if $\Vx$ is a constrained stationary
  point of \Eqn{f}.
\end{theorem}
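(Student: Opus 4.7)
The plan is to directly connect the eigenvalue equation $\TA\Vx^{m-1} = \lambda\Vx$ to the KKT-style stationarity condition $\nabla f(\Vx) + \mu_* \Vx = \V{0}$ from \Thm{nlp}, using the gradient formula $\nabla f(\Vx) = m\,\TA\Vx^{m-1}$ already proved in \Lem{g}. The identification $\mu_* = -m\lambda$ (equivalently $\lambda = -\mu_*/m$) converts one condition into the other. Neither direction presents a real obstacle: the entire proof is essentially a one-line substitution plus careful bookkeeping of the constraint $\Vx \in \US$.

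For the forward direction, I would assume $(\lambda, \Vx)$ is an eigenpair. By \Def{EVP}, this gives $\Vx \in \US$ and $\TA\Vx^{m-1} = \lambda\Vx$. Multiplying by $m$ and invoking \Lem{g} yields $\nabla f(\Vx) = m\lambda\Vx$. Setting $\mu_* = -m\lambda$ produces $\nabla f(\Vx) + \mu_* \Vx = \V{0}$, so by \Thm{nlp}, $\Vx$ is a constrained stationary point of \Eqn{f}.

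For the reverse direction, I would assume $\Vx$ is a constrained stationary point of \Eqn{f}. By \Thm{nlp}, $\Vx \in \US$ and there exists $\mu_* \in \Real$ with $\nabla f(\Vx) + \mu_* \Vx = \V{0}$. Applying \Lem{g} gives $m\,\TA\Vx^{m-1} + \mu_* \Vx = \V{0}$, which rearranges to $\TA\Vx^{m-1} = -(\mu_*/m)\Vx$. Defining $\lambda = -\mu_*/m$, we recover \Eqn{EVP}, so $(\lambda,\Vx)$ is an eigenpair.

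As a sanity check on consistency with the objective value, taking the inner product of the eigenvalue equation with $\Vx$ and using $\Vx\Tra\Vx = 1$ gives $\lambda = \Vx\Tra(\TA\Vx^{m-1}) = \TA\Vx^m = f(\Vx)$, which confirms that the eigenvalue equals the value of the objective at the stationary point (and is independent of which eigenvector representative is chosen). The only thing worth being careful about is that the theorem as stated is about $\Vx$ being a stationary point --- the eigenvalue $\lambda$ is uniquely determined from $\Vx$ by this relation, so there is no ambiguity in the correspondence.
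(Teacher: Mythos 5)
Your proof is correct and follows essentially the same route as the paper's: both directions reduce to the substitution $\mu_* = -m\lambda$ (equivalently $\lambda = -\mu_*/m$) in the stationarity condition of \Thm{nlp}, using the gradient formula from \Lem{g}. The added observation that $\lambda = f(\Vx)$ is a harmless bonus the paper states elsewhere.
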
%
\begin{proof}
  By \Thm{nlp}, 
  any constrained stationary point $\Vx_*$ of \Eqn{f} must satisfy
  $m \, \TA \Vx_*^{m-1} + \mu_* \Vx_* = 0$
  for some $\mu_* \in \Real$. Thus, $\lambda_* = -\mu_*/m$ is the
  eigenvalue corresponding to $\Vx_*$. Conversely, any eigenpair meets
  the condition for being a constrained stationary point with $\mu_* = -m\lambda_*$.
\end{proof}

This is is the connection between
\Eqn{f} and the eigenvalue problem.
It will also be useful to consider the Hessian of $f$, which we
present here.

\begin{lemma}\label{lem:H}
  Let $\TA \in \RT{m}{n}$ be symmetric. The Hessian of
  $f(\Vx) = \TA\Vx^m$ is
  \begin{equation}
    \label{eq:H}
    H(\Vx) \equiv \nabla^2 f(\Vx) = m(m-1) \TA \Vx^{m-2} \in
    \Real^{n \times n}.
  \end{equation}
\end{lemma}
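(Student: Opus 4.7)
The plan is to differentiate the gradient formula from Lemma \ref{lem:g} one more time, using exactly the same symmetry trick that was used there. Since $g(\Vx) = \nabla f(\Vx) = m\,\TA\Vx^{m-1}$, we have
\begin{displaymath}
  g_k(\Vx) = m \sum_{i_2,\dots,i_m} \TE{a}{k i_2 \cdots i_m} \VE{x}{i_2} \cdots \VE{x}{i_m},
\end{displaymath}
and the $(k,l)$ entry of the Hessian is $H_{kl}(\Vx) = \partial g_k/\partial \VE{x}{l}$. So the first step is to apply the product rule to this expression, producing a sum over $q \in \{2,\dots,m\}$ in which the $q$th factor of $\Vx$ is replaced by $\delta_{i_q l}$.

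The second step mirrors the argument used to prove Lemma \ref{lem:g}. For each fixed $q$, the symmetry of $\TA$ under permutation of its indices lets me interchange the dummy indices $i_2$ and $i_q$ in the summand, so every one of the $m-1$ terms is equal. Pulling out the factor of $m-1$ and evaluating the $\delta_{i_2 l}$, the sum collapses to
\begin{displaymath}
  H_{kl}(\Vx) = m(m-1) \sum_{i_3,\dots,i_m} \TE{a}{k l i_3 \cdots i_m} \VE{x}{i_3} \cdots \VE{x}{i_m},
\end{displaymath}
which is precisely $m(m-1)(\TA\Vx^{m-2})_{kl}$ by \Def{mult}. This gives the claimed identity.

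There is no real obstacle here; the calculation is a direct analogue of the gradient computation in Lemma \ref{lem:g}. The only thing worth being careful about is the bookkeeping of indices when applying the product rule (the derivative lands on any one of the $m-1$ remaining $\Vx$ factors in $g_k$, not on all $m$), and making sure the symmetry argument is invoked correctly so that the sum over $q$ contributes the factor $m-1$ rather than $m$.
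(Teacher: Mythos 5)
Your proof is correct and is essentially the paper's argument: both obtain the factor $m-1$ from the product rule applied to the $m-1$ remaining factors of $\Vx$ in $g_k$, with the full symmetry of $\TA$ collapsing those terms into one. The only cosmetic difference is that the paper packages the computation by viewing $g_j(\Vx)$ as $m\,\T{B}^{(j)}\Vx^{m-1}$ for the symmetric order-$(m-1)$ subtensor $\T{B}^{(j)}$ and then re-invoking \Lem{g}, whereas you carry out the same index manipulation inline.
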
%
\begin{proof}
  The $(j,k)$ entry of $H(\Vx)$ is given by the $k^\text{th}$ entry of
  $\nabla g_j(\Vx)$. The function $g_j(\Vx)$ can be rewritten as
  \begin{displaymath}
    g_j(\Vx) = m \sum_{i_2,\dots,i_m}  
    \TE{a}{j i_2 \cdots i_m} 
    \VE{x}{i_2} \cdots \VE{x}{i_m}
    = m \, \T{B}^{(j)} \Vx^{m-1}
  \end{displaymath}
  where $\T{B}^{(j)}$ is the order-$(m-1)$ symmetric tensor that
  is the $j^\text{th}$ subtensor of $\TA$, defined by $\TE{B}{i_1 \cdots
    i_{m-1}}^{(j)} = \TE{A}{j i_1 \cdots i_{m-1}}$. From \Lem{g}, we
  have
  \begin{displaymath}
    \nabla g_j(\Vx) = m(m-1) \T{B}^{(j)} \Vx^{m-2}.
  \end{displaymath}
  Consequently,
  \begin{displaymath}
    (H(\Vx))_{jk} = m(m-1) \sum_{i_3,\dots,i_m} \TE{A}{j k i_3 \cdots
    i_m} \VE{x}{i_3} \cdots \VE{x}{i_m},
  \end{displaymath}
  that is, $H(\Vx) = m(m-1)\TA\Vx^{m-2}$.
\end{proof}

From \Thm{nlp}, we know that the projected Hessian of the Lagrangian
plays a role in determining whether or not a fixed point is a local
maximum or minimum. In our case, since $\mu_* = -m
\lambda_*$, for any eigenpair $(\lambda_*,\Vx_*)$ (which must
correspond to a constrained stationary point by \Thm{equiv})
we have
\begin{displaymath}
  \nabla^2 \mathcal{L}(\Vx_*,\lambda_*) = 
  m(m-1)\TA\Vx_*^{m-2} - m \lambda_* \M{I}.
\end{displaymath}
Specifically, \Thm{nlp} is concerned with the behavior of the Hessian
of the Lagrangian in the subspace orthogonal to $\Vx_*$. 
Thus, we define the projected Hessian of the Lagrangian as
\begin{equation} \label{eq:C}
  C(\lambda_*,\Vx_*) \equiv 
  \M{U}_*\Tra \left((m-1) \TA\Vx_*^{m-2} - \lambda_* \M{I}\right) \M{U}_* \in \Real^{(n-1) \times (n-1)},
\end{equation}
where the columns of $\M{U}_*\in\Real^{n \times (n-1)}$ form an orthonormal basis for
$\Vx_*^{\bot}$. Note that we have removed a factor of $m$ for
convenience. We now classify eigenpairs according to the spectrum of
$C(\lambda_*,\Vx_*)$. 
The import of this classification will be made clear in \Sec{fp}.

\begin{definition}
  Let $\TA \in \RT{m}{n}$ be a symmetric tensor.  We say an eigenpair
  $(\lambda,\Vx)$ of $\TA \in \RT{m}{n}$ is \emph{positive stable} if
  $C(\lambda,\Vx)$ is positive definite, \emph{negative stable} if
  $C(\lambda,\Vx)$ is negative definite, and \emph{unstable} if
  $C(\lambda,\Vx)$ is indefinite.
\end{definition}

These labels are not exhaustive because we do not name the cases where
$C(\lambda,\Vx)$ is only semidefinite, with a zero eigenvalue. Such cases
do not occur for generic tensors.

If $m$ is odd, then $(\lambda,\Vx)$ is positive stable if and only if 
$(-\lambda,-\Vx)$ is negative stable, even though these eigenpairs are
in the same equivalence class. On the other hand, if $m$ is even, then
$(\lambda,\Vx)$ is a positive (negative) stable eigenpair if and only if
$(\lambda,-\Vx)$ is also positive (negative) stable.

%% --------------------------------------------------
%% S-HOPM Analysis
%% --------------------------------------------------
\subsection{S-HOPM convergence analysis}
\label{sec:shopm_analysis}

S-HOPM has been deemed unreliable \cite{DeDeVa00a} because convergence
is not guaranteed.
Kofidis and Regalia \cite{KoRe02} provide an analysis explaining that
S-HOPM will converge if certain conditions are met, as well as an example
where the method does not converge, which we reproduce here.

\begin{example2}{{Kofidis and Regalia \cite[Example 1]{KoRe02}}}\label{ex:KoRe02_ex1}
  Let $\TA \in \RT{4}{3}$ be the symmetric tensor defined by 
  \begin{align*}
a_{1111} &=  \phantom{-}0.2883, & 
a_{1112} &= -0.0031, & 
a_{1113} &=  \phantom{-}0.1973, & 
a_{1122} &= -0.2485,\\
a_{1123} &= -0.2939, & 
a_{1133} &=  \phantom{-}0.3847, & 
a_{1222} &=  \phantom{-}0.2972, & 
a_{1223} &=  \phantom{-}0.1862,\\
a_{1233} &=  \phantom{-}0.0919, & 
a_{1333} &= -0.3619, & 
a_{2222} &=  \phantom{-}0.1241, & 
a_{2223} &= -0.3420,\\
a_{2233} &=  \phantom{-}0.2127, & 
a_{2333} &=  \phantom{-}0.2727, & 
a_{3333} &= -0.3054.
  \end{align*}
  Kofidis and Regalia \cite{KoRe02} observed that \Alg{shopm} does not
  converge for this tensor. 
  Because this problem is small, all eigenpairs can be calculated by
  Mathematica as described in \Sec{tensors}.
  From \Thm{neigs}, this problem has at most 13 eigenpairs; we list the 11 real eigenpairs in 
  \Tab{KoRe02_ex1}.
  We ran 100 trials of S-HOPM using different random starting
  points $\Vx_0$ chosen from a uniform
    distribution on $[-1,1]^n$. For these experiments,
  we allow up to 1000 iterations and say that the algorithm has
  converged if $|\lambda_{k+1} - \lambda_k | < 10^{-16}$.  In every
  single trial for this tensor, the algorithm failed to converge. In \Fig{KoRe02_ex1},
  we show an example $\{\lambda_k\}$ sequence with $\Vx_0 =
  \begin{bmatrix} -0.2695 & 0.1972 & 0.3370 \end{bmatrix}\Tra$. This
  coincides with the results reported previously \cite{KoRe02}.
\end{example2}

\begin{table}[htbp]
  \centering
  \caption{Eigenpairs for $\T{A} \in \RT{4}{3}$ from \Ex{KoRe02_ex1}.}
  \label{tab:KoRe02_ex1}
  \footnotesize
  \begin{tabular}{|c|c|c|c|} \hline
$\lambda$ & $\Vx\Tra$ & Eigenvalues of $C(\lambda,\Vx)$ & Type \\ \hline
 $\phantom{-}0.8893$  & [ $\phantom{-}0.6672$  $\phantom{-}0.2471$  $-0.7027$ ] &  $\{$  $-0.8857$,  $-1.8459$  $\}$ & Neg.~stable \\  \hline
 $\phantom{-}0.8169$  & [ $\phantom{-}0.8412$  $-0.2635$  $\phantom{-}0.4722$ ] &  $\{$  $-0.9024$,  $-2.2580$  $\}$ & Neg.~stable \\  \hline
 $\phantom{-}0.5105$  & [ $\phantom{-}0.3598$  $-0.7780$  $\phantom{-}0.5150$ ] &  $\{$  $\phantom{-}0.5940$,  $-2.3398$  $\}$ & Unstable \\  \hline
 $\phantom{-}0.3633$  & [ $\phantom{-}0.2676$  $\phantom{-}0.6447$  $\phantom{-}0.7160$ ] &  $\{$  $-1.1765$,  $-0.5713$  $\}$ & Neg.~stable \\  \hline
 $\phantom{-}0.2682$  & [ $\phantom{-}0.6099$  $\phantom{-}0.4362$  $\phantom{-}0.6616$ ] &  $\{$  $\phantom{-}0.7852$,  $-1.1793$  $\}$ & Unstable\\  \hline
 $\phantom{-}0.2628$  & [ $\phantom{-}0.1318$  $-0.4425$  $-0.8870$ ] &  $\{$  $\phantom{-}0.6181$,  $-2.1744$  $\}$ & Unstable \\  \hline
 $\phantom{-}0.2433$  & [ $\phantom{-}0.9895$  $\phantom{-}0.0947$  $-0.1088$ ] &  $\{$  $-1.1942$,  $\phantom{-}1.4627$  $\}$ & Unstable \\  \hline
 $\phantom{-}0.1735$  & [ $\phantom{-}0.3357$  $\phantom{-}0.9073$  $\phantom{-}0.2531$ ] &  $\{$  $-1.0966$,  $\phantom{-}0.8629$  $\}$ & Unstable \\  \hline
 $-0.0451$  & [ $\phantom{-}0.7797$  $\phantom{-}0.6135$  $\phantom{-}0.1250$ ] &  $\{$  $\phantom{-}0.8209$,  $\phantom{-}1.2456$  $\}$ & Pos.~stable \\  \hline
 $-0.5629$  & [ $\phantom{-}0.1762$  $-0.1796$  $\phantom{-}0.9678$ ] &  $\{$  $\phantom{-}1.6287$,  $\phantom{-}2.3822$  $\}$ & Pos.~stable \\  \hline
 $-1.0954$  & [ $\phantom{-}0.5915$  $-0.7467$  $-0.3043$ ] &  $\{$  $\phantom{-}1.8628$,  $\phantom{-}2.7469$  $\}$ & Pos.~stable\\  \hline
  \end{tabular}
\end{table}

\begin{figure}[htbp]
  \centering
  \includegraphics{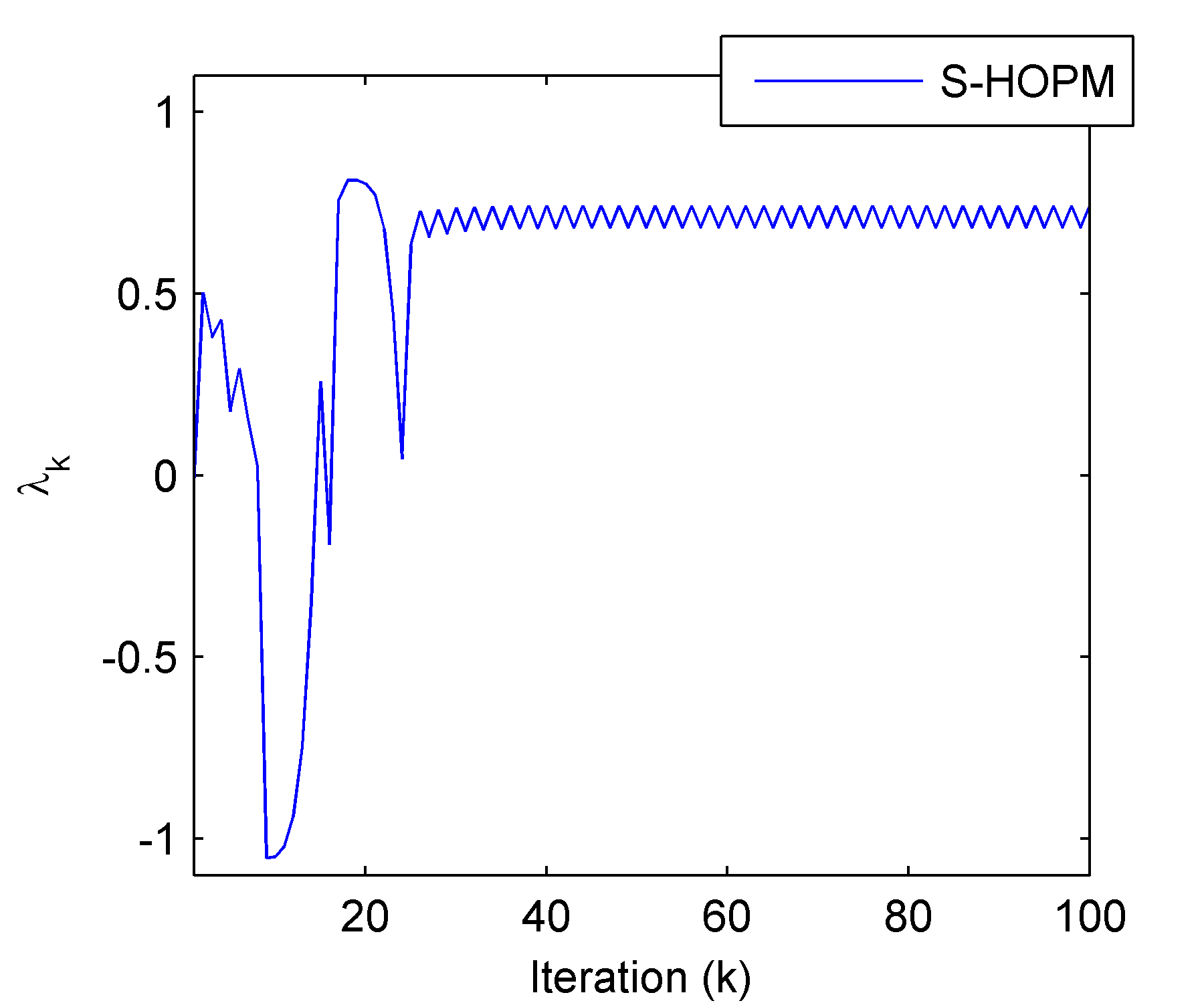}
  \caption{Example $\lambda_k$ values for S-HOPM on $\T{A} \in
  \RT{4}{3}$ from \Ex{KoRe02_ex1}.}
  \label{fig:KoRe02_ex1}
\end{figure}

\begin{example}
  \label{ex:odd}
  As a second illustrative example, we consider an odd-order tensor
  $\TA \in \RT{3}{3}$ defined by
  \begin{align*}
a_{111} &= -0.1281, & 
a_{112} &=  \phantom{-}0.0516, & 
a_{113} &= -0.0954, & 
a_{122} &= -0.1958,\\
a_{123} &= -0.1790, & 
a_{133} &= -0.2676, & 
a_{222} &=  \phantom{-}0.3251, & 
a_{223} &=  \phantom{-}0.2513,\\
a_{233} &=  \phantom{-}0.1773, & 
a_{333} &=  \phantom{-}0.0338.
  \end{align*}
  From \Thm{neigs}, $\TA$ has at most 7 eigenpairs; in this case we
  achieve that bound and the eigenpairs are  
  listed in \Tab{odd}. We ran 100 trials of S-HOPM as
  described for \Ex{KoRe02_ex1}. Every trial converged to either
  $\lambda=0.8730$ or $\lambda=0.4306$, as summarized in
  \Tab{odd-zero}. Therefore, S-HOPM finds 2 of the 7 possible
  eigenvalues. 
\end{example}

\begin{table}[htbp]
  \centering
  \caption{Eigenpairs for $\T{A} \in \RT{3}{3}$ from \Ex{odd}.}
  \label{tab:odd}
  \footnotesize
  \begin{tabular}{|c|c|c|c|}\hline
$\lambda$ & $\Vx\Tra$ & Eigenvalues of $C(\lambda,\Vx)$ & Type \\ \hline
 $0.8730$  & [ $-0.3922$  $\phantom{-}0.7249$  $\phantom{-}0.5664$ ] &  $\{$  $-1.1293$,  $-0.8807$  $\}$ & Neg.~stable \\  \hline
 $0.4306$  & [ $-0.7187$  $-0.1245$  $-0.6840$ ] &  $\{$  $-0.4420$,  $-0.8275$  $\}$ & Neg.~stable \\  \hline
 $0.2294$  & [ $-0.8446$  $\phantom{-}0.4386$  $-0.3070$ ] &  $\{$  $-0.2641$,  $\phantom{-}0.7151$  $\}$ & Unstable \\  \hline
 $0.0180$  & [ $\phantom{-}0.7132$  $\phantom{-}0.5093$  $-0.4817$ ] &  $\{$  $-0.4021$,  $-0.1320$  $\}$ & Neg.~stable \\  \hline
 $0.0033$  & [ $\phantom{-}0.4477$  $\phantom{-}0.7740$  $-0.4478$ ] &  $\{$  $-0.1011$,  $\phantom{-}0.2461$  $\}$ & Unstable \\  \hline
 $0.0018$  & [ $\phantom{-}0.3305$  $\phantom{-}0.6314$  $-0.7015$ ] &  $\{$  $\phantom{-}0.1592$,  $-0.1241$  $\}$ & Unstable \\  \hline
 $0.0006$  & [ $\phantom{-}0.2907$  $\phantom{-}0.7359$  $-0.6115$ ] &  $\{$  $\phantom{-}0.1405$,  $\phantom{-}0.0968$  $\}$ & Pos.~stable\\  \hline
  \end{tabular}
\end{table}

\begin{table}[htbp]
  \centering
  \caption{Eigenpairs for $\TA\in\RT{3}{3}$ from \Ex{odd} computed by S-HOPM with 100
    random starts.}
  \label{tab:odd-zero}
  \footnotesize
  \begin{tabular}{|c|c|c|c|}
    \hline
%% Number of trials = 100
%% Number of failures = 0
\# Occurrences & $\lambda$ & $\Vx$ & Median Its. \\ \hline
  62 &  $\phantom{-}0.8730$  & [ $-0.3922$  $\phantom{-}0.7249$  $\phantom{-}0.5664$ ] &   19 \\ \hline 
  38 &  $\phantom{-}0.4306$  & [ $-0.7187$  $-0.1245$  $-0.6840$ ] &  184 \\ \hline 
  \end{tabular}
\end{table}

In their analysis, Kofidis and Regalia \cite{KoRe02} proved that the
sequence $\{\lambda_k\}$ in   
\Alg{shopm} converges if $\TA \in \RT{m}{n}$ is even-order and the function
$f(\Vx)$ is convex or concave on $\Rn$.
Since $m=2\ell$ (because $m$ is even), $f$ can be expressed as
\begin{displaymath}
  f(\Vx) = 
  (\, \underbrace{\Vx \otimes \cdots \otimes \Vx}_{\text{$\ell$ times}} \, )\Tra
  \M{A}\,
  (\, \underbrace{\Vx \otimes \cdots \otimes \Vx}_{\text{$\ell$ times}} \, ),
\end{displaymath}
where $\M{A} \in \Real^{n^{\ell} \times n^{\ell}}$ is an unfolded version
of the tensor $\TA$.\footnote{Specifically, $\M{A} \equiv \M{A}_{(\mathcal{R}
  \times \mathcal{C})}$ with $\mathcal{R} = \{1,\dots,\ell\}$ and
$\mathcal{C} = \{\ell+1,\dots,m\}$ in matricization notation
\cite{Ko06}.} Since $\TA$ is symmetric, it follows that $\M{A}$ is
symmetric. The condition that $f$ is convex (concave) is satisfied
if the Hessian
\begin{displaymath}
  \nabla^2 f(\Vx) =
  (\, \M{I} \otimes \underbrace{\Vx \otimes \cdots \otimes \Vx}_{\text{$\ell-1$ times}} \, )\Tra
  \M{A}\,
  (\, \M{I} \otimes \underbrace{\Vx \otimes \cdots \otimes \Vx}_{\text{$\ell-1$ times}} \, )
\end{displaymath}
is positive (negative) semidefinite for all $\Vx \in \Rn$.

We make a few notes regarding these results. First, even though
$f$ is convex, its restriction to the nonconvex set $\US$ is not.
Second, $\{\lambda_k\}$ is increasing if $f$ is
convex and decreasing if $f$ is concave.
Third, only $\{\lambda_k\}$ is proved to converge for S-HOPM \cite[Theorem 4]{KoRe02};
the iterates $\{\Vx_k\}$ may not. In particular, it is easy to observe
that the sign of $\Vx_k$ may flip back and forth if the concave case
is not handled correctly.

%% ----------------------------------------------------------------------
%% Real-valued 
%% ----------------------------------------------------------------------
\section{Shifted symmetric higher-order power method (SS-HOPM)}
\label{sec:sshopm}
In this section, we show that S-HOPM can be modified by adding a
``shift'' that guarantees that the method will always converge to an
eigenpair. In the context of ICA, this idea has also been proposed by
Regalia and Kofidis \cite{ReKo03} and Erdogen \cite{Er09}.
% One difference in our analysis is that we make no assumptions that the 
% tensor order is even; therefore, our
% results generalize these previous results. 
Based on the observation that
S-HOPM is guaranteed to converge if the underlying
function is convex or concave on $\Rn$, our method works with a 
suitably modified function
\begin{equation}
  \label{eq:hatf}
  \hat f(\Vx) \equiv f(\Vx) + \alpha (\Vx\Tra\Vx)^{m/2}.
\end{equation}
Maximizing $\hat f$ on $\US$ is the same as maximizing $f$ plus a
constant, yet the properties of the modified function force convexity
or concavity and consequently guarantee
convergence to a KKT point (not necessary the \emph{global} maximum or minimum).
Note that previous papers \cite{ReKo03,Er09} have proposed 
similar shifted functions that are essentially of the form
\begin{inlinemath}
    \hat f(\Vx) \equiv f(\Vx) + \alpha \Vx\Tra\Vx,
\end{inlinemath}
differing only in the exponent.

An advantage of our choice of $\hat f$ in \Eqn{hatf} is that, for even $m$, it
can be interpreted as 
\begin{displaymath}
  \hat f(\Vx) = \hat\TA \Vx^m \equiv (\TA + \alpha \T{E})\Vx^m,
\end{displaymath}
where $\T{E}$ is the identity tensor as defined in \Eqn{identity}.
Thus, for even $m$, our proposed method can be interpreted as S-HOPM applied
to a modified tensor that directly satisfies the convexity properties
to guarantee convergence \cite{KoRe02}. 
Because $\T{E} \Vx^{m-1} = \Vx$ for $\Vx \in \US$, the eigenvectors of
$\hat\TA$ are the same as those of $\TA$ and the eigenvalues are
shifted by $\alpha$. 
Our results, however, are for both odd- and even-order tensors.

\Alg{sshopm} presents the shifted symmetric higher-order power
method (SS-HOPM). Without loss of generality, we assume that a positive
shift ($\alpha \geq 0$) is used to make the modified function in
\Eqn{hatf} convex and a 
negative shift ($\alpha < 0$) to make it concave. 
We have two key results.
\Thm{main} shows that for any starting point $\Vx_0 \in \US$, the
sequence $\{\lambda_k\}$ produced by 
\Alg{sshopm} is guaranteed to converge to an eigenvalue in the
convex case if 
\begin{equation}
  \label{eq:beta}
  \alpha > \beta(\TA) 
  \equiv (m-1) \cdot \max_{\Vx \in \US} \rho(\TA\Vx^{m-2}).
\end{equation}
\Cor{main} handles the concave case where we require $\alpha < -\beta(\TA)$.
\Thm{fp} further shows that \Alg{sshopm} in the convex case will
generically converge to an eigenpair $(\lambda, \Vx)$ that is
negative stable.
\Cor{fp} proves that
\Alg{sshopm} in the concave case will generically converge to an eigenpair
that is positive stable. Generally, neither version will
converge to an eigenpair that is unstable.

\begin{algorithm}
  \caption{Shifted Symmetric  Higher-Order Power Method (SS-HOPM)}
  \label{alg:sshopm}
    Given a tensor $\TA \in \RT{m}{n}$.
  \begin{algorithmic}[1]
    \Require $\Vx_0 \in \Rn$ with $\| \Vx_0 \| = 1$. Let
    $\lambda_0 = \TA \Vx_0^{m}$.
    \Require $\alpha \in \Real$
    \For{$k=0,1,\dots$}
    \If{$\alpha \geq 0$}
    \State $\hat \Vx_{k+1} \gets \TA \Vx_k^{m-1} + \alpha \Vx_k$ %
    \Comment{Assumed Convex}
    \Else
    \State $\hat \Vx_{k+1} \gets -(\TA \Vx_k^{m-1} + \alpha \Vx_k)$
    \Comment{Assumed Concave}
    \EndIf
    \State $\Vx_{k+1} \gets \hat \Vx_{k+1} / \| \hat \Vx_{k+1} \|$
    \State $\lambda_{k+1} \gets \TA \Vx_{k+1}^{m}$
    \EndFor
  \end{algorithmic}
\end{algorithm}

%% --------------------------------------------------
%% SS-HOPM Analysis
%% --------------------------------------------------
\subsection{SS-HOPM convergence analysis}
We first establish a few key lemmas that guide the choice of the shift
$\alpha > \beta(\TA)$ in SS-HOPM\@.

\begin{lemma}\label{lem:beta_bound}
  Let $\TA \in \RT{m}{n}$ be symmetric and let $\beta(\TA)$ be as
  defined in \Eqn{beta}. Then $\beta(\TA) \leq (m-1)
  \sum_{i_1,\dots,i_m} |\TE{a}{i_1\dots i_m}|$.
\end{lemma}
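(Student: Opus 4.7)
The plan is to bound the spectral radius of the matrix $\M{M}(\Vx) \equiv \TA\Vx^{m-2}$ by a crude entrywise bound that decouples from $\Vx$, which is easy because $\Vx\in\US$ forces $|\VE{x}{i}|\le 1$ for every component.

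First I would recall that for any matrix $\M{M}\in\Real^{n\times n}$, the spectral radius is dominated by any matrix norm, and in particular by the sum of absolute values of all entries: $\rho(\M{M})\le \sum_{i,j}|M_{ij}|$. (One can also appeal to $\rho(\M{M})\le\|\M{M}\|_\infty$ or $\|\M{M}\|_F$; the sum-of-entries bound is the laziest and entirely adequate here.) Applying this to $\M{M}(\Vx)=\TA\Vx^{m-2}$ gives
\begin{displaymath}
  \rho(\TA\Vx^{m-2})\;\le\;\sum_{i_1,i_2}\Bigl|\sum_{i_3,\dots,i_m}\TE{a}{i_1 i_2 i_3\cdots i_m}\VE{x}{i_3}\cdots\VE{x}{i_m}\Bigr|.
\end{displaymath}

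Next I would apply the triangle inequality inside the outer sum and factor out $|\VE{x}{i_3}|\cdots|\VE{x}{i_m}|$. Since $\|\Vx\|=1$ implies $|\VE{x}{i}|\le 1$ for every $i$, each monomial $|\VE{x}{i_3}|\cdots|\VE{x}{i_m}|\le 1$, and the inequality collapses to
\begin{displaymath}
  \rho(\TA\Vx^{m-2})\;\le\;\sum_{i_1,\dots,i_m}|\TE{a}{i_1\cdots i_m}|.
\end{displaymath}
Since the right-hand side no longer depends on $\Vx$, the same bound holds after taking the maximum over $\Vx\in\US$. Multiplying through by $(m-1)$ gives the claim.

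There is no real obstacle here; the only thing to be careful about is the choice of matrix-norm bound on $\rho$, and the observation that $|\VE{x}{i}|\le\|\Vx\|_2=1$ on $\US$ so that the vector dependence can simply be thrown away. The resulting bound is loose (as the authors probably note in context) but suffices for the purpose of choosing a safe shift $\alpha>\beta(\TA)$ in SS-HOPM without inspecting $\TA$ in detail.
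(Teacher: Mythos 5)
Your proof is correct and essentially the same as the paper's: both come down to a triangle-inequality bound on the $n^m$ terms using $|\VE{x}{i}|\le\|\Vx\|=1$. The only cosmetic difference is that the paper controls $\rho(\TA\Vx^{m-2})$ via the Rayleigh quotient $|\Vy\Tra(\TA\Vx^{m-2})\Vy|$ over $\Vy\in\US$ (exploiting the symmetry of that matrix), whereas you use an entrywise matrix-norm bound on $\rho$; both are valid and lead to the identical estimate.
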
%
\begin{proof}
For all $\Vx,\Vy \in \US$, we obtain
$|\Vy\Tra(\TA\Vx^{m-2})\Vy| \le \sum_{i_1,\dots,i_m} |\TE{a}{i_1\dots i_m}|$
by applying the triangle inequality to the sum of $n^m$ terms. Thus
$\rho(\TA\Vx^{m-2}) \le \sum_{i_1,\dots,i_m} |\TE{a}{i_1\dots i_m}|$
for all $\Vx \in \US$, and the result follows.
\end{proof}

\begin{lemma}\label{lem:f_bound}
  Let $\TA \in \RT{m}{n}$ be symmetric, let $f(\Vx)=\TA\Vx^m$, and let $\beta(\TA)$
  be as defined in \Eqn{beta}.
  Then $|f(\Vx)| \leq \beta(\TA)/(m-1)$ for all $\Vx \in \US$.
\end{lemma}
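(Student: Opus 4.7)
The plan is to realize $f(\Vx)$ as a quadratic form in $\Vx$ whose matrix is $\TA\Vx^{m-2}$, and then bound this quadratic form on the unit sphere by the spectral radius of that matrix.

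First, I would apply \Def{mult} twice to rewrite
\begin{equation*}
  f(\Vx) = \TA\Vx^m = \Vx\Tra(\TA\Vx^{m-2})\Vx
\end{equation*}
for any $\Vx \in \Rn$, where $\TA\Vx^{m-2} \in \Real^{n\times n}$. Because $\TA$ is symmetric, the matrix $\M{M}(\Vx) \equiv \TA\Vx^{m-2}$ is a symmetric $n \times n$ matrix (its $(i,j)$ entry is symmetric in $i,j$ by permutation invariance of the indices of $\TA$).

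Next, for any real symmetric matrix $\M{M}$ and any $\Vx \in \US$, one has the standard Rayleigh-quotient bound $|\Vx\Tra \M{M}\Vx| \leq \rho(\M{M})$, since $\M{M}$ is orthogonally diagonalizable and the diagonalized quadratic form is a convex combination of the eigenvalues of $\M{M}$. Applying this with $\M{M} = \M{M}(\Vx)$ gives
\begin{equation*}
  |f(\Vx)| = |\Vx\Tra \M{M}(\Vx) \Vx| \leq \rho(\M{M}(\Vx)) = \rho(\TA\Vx^{m-2}).
\end{equation*}

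Finally, taking the supremum over $\Vx \in \US$ on the right-hand side and using the definition \Eqn{beta} of $\beta(\TA)$, I obtain
\begin{equation*}
  |f(\Vx)| \leq \max_{\Vy \in \US} \rho(\TA\Vy^{m-2}) = \frac{\beta(\TA)}{m-1}
\end{equation*}
for all $\Vx \in \US$, completing the proof.

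There is no real obstacle here; the only subtlety is recognizing that the same $\Vx$ appears inside $\TA\Vx^{m-2}$ and in the outer quadratic form, but the spectral-radius bound applies for each fixed $\Vx$ separately before one takes the maximum over $\US$.
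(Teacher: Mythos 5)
Your proposal is correct and follows exactly the paper's argument: write $f(\Vx)=\Vx\Tra(\TA\Vx^{m-2})\Vx$, bound the quadratic form by $\rho(\TA\Vx^{m-2})$ via the Rayleigh quotient for the symmetric matrix $\TA\Vx^{m-2}$, and then bound that by $\beta(\TA)/(m-1)$ using \Eqn{beta}. You simply spell out the justification of the spectral-radius bound in more detail than the paper's one-line proof.
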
%
\begin{proof}
We have $|\TA\Vx^m| = |\Vx\Tra(\TA\Vx^{m-2})\Vx| \le \rho(\TA\Vx^{m-2}) \le \beta(\TA)/(m-1)$.
\end{proof}

The preceding lemma upper bounds the magnitude of any eigenvalue of $\TA$ by
$\beta(\TA)/(m-1)$ since any eigenpair $(\lambda,\Vx)$ satisfies
$\lambda = f(\Vx)$. Thus, choosing $\alpha > \beta(\TA)$ implies 
that $\alpha$ is greater than the magnitude of any eigenvalue of $\TA$.

\begin{lemma}\label{lem:H_bound}
  Let $\TA \in \RT{m}{n}$ be symmetric and let $H(\Vx)$ and $\beta(\TA)$
  be as defined in \Eqn{H} and \Eqn{beta}.  
  Then $\rho(H(\Vx)) \leq m \beta(\TA)$ for all $\Vx \in \US$.
\end{lemma}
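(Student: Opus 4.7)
The plan is to combine the Hessian formula from \Lem{H} with the definition of $\beta(\TA)$ in \Eqn{beta}, after which the bound is essentially immediate.

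First I would recall that \Lem{H} gives $H(\Vx) = m(m-1)\,\TA\Vx^{m-2}$, so $H(\Vx)$ is just a positive scalar multiple of the matrix $\TA\Vx^{m-2}$. Since the spectral radius is absolutely homogeneous, $\rho(H(\Vx)) = m(m-1)\,\rho(\TA\Vx^{m-2})$.

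Next I would invoke the definition $\beta(\TA) = (m-1)\max_{\Vy \in \US}\rho(\TA\Vy^{m-2})$, which immediately yields $\rho(\TA\Vx^{m-2}) \leq \beta(\TA)/(m-1)$ for every $\Vx \in \US$. Substituting this into the previous equality gives $\rho(H(\Vx)) \leq m(m-1)\cdot\beta(\TA)/(m-1) = m\,\beta(\TA)$, which is the claim.

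There is no real obstacle here: the lemma is a straightforward scaling observation, intended (together with \Lem{beta_bound} and \Lem{f_bound}) as a technical tool for later arguments about the shift $\alpha > \beta(\TA)$ making $\hat f$ convex through its Hessian. The only point worth stating explicitly in the written-up proof is the homogeneity $\rho(cM) = |c|\,\rho(M)$ for a scalar $c$, so that the reader sees why the factor $m(m-1)$ pulls out cleanly.
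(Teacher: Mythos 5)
Your proof is correct and is exactly the paper's argument; the paper simply states that the bound ``follows directly from \Eqn{H} and \Eqn{beta},'' and your write-up spells out the same one-line scaling computation. No issues.
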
%
\begin{proof}
This follows directly from \Eqn{H} and \Eqn{beta}.
\end{proof}

The following theorem proves that \Alg{sshopm} will always converge. Choosing $\alpha >
(m-1) \sum_{i_1,\dots,i_m} |\TE{A}{i_1\dots i_m}|$ is a conservative choice that is
guaranteed to work by \Lem{beta_bound}, but this may slow down
convergence considerably, as we show in subsequent analysis and examples.  

\begin{theorem}\label{thm:main}
  Let $\TA \in \RT{m}{n}$ be symmetric.  For $\alpha > \beta(\TA)$,
  where $\beta(\TA)$ is defined in \Eqn{beta},
  the iterates $\{\lambda_k,\Vx_k\}$ produced by \Alg{sshopm} satisfy the
  following properties. 
  \begin{inparaenum}[(a)]
  \item The sequence $\{\lambda_k\}$ is nondecreasing, and there exists
    $\lambda_*$ such that $\lambda_k \rightarrow \lambda_*$. 
  \item The sequence $\{\Vx_k\}$ has an accumulation point.
  \item For every such accumulation point $\Vx_*$, the pair $(\lambda_*,\Vx_*)$ is an eigenpair of $\TA$.
  \item \label{finite} If $\TA$ has finitely many real eigenvectors, then there exists $\Vx_*$ such that $\Vx_k \rightarrow \Vx_*$.
  \end{inparaenum}
\end{theorem}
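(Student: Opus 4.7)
The plan is to reinterpret each SS-HOPM step as normalized gradient ascent on the shifted objective $\hat f(\Vx) = f(\Vx) + \alpha(\Vx\Tra\Vx)^{m/2}$, and then to invoke the Kofidis--Regalia convexity step (\Thm{cvx}) for $\hat f$ in place of $f$. First I would verify that $\hat f$ is convex on $\UB$ when $\alpha > \beta(\TA)$. A direct Hessian computation gives, for $\Vx \ne \V{0}$,
\begin{displaymath}
\nabla^2 \hat f(\Vx) = H(\Vx) + \alpha m \|\Vx\|^{m-2}\M{I} + \alpha m(m-2)\|\Vx\|^{m-4}\Vx\Vx\Tra,
\end{displaymath}
and \Lem{H_bound} applied to $H(\Vx/\|\Vx\|)$ yields $\nabla^2 \hat f(\Vx) \succeq m\|\Vx\|^{m-2}(\alpha - \beta(\TA))\M{I} \succeq \V{0}$. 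Continuity handles $\V{0}$, so \Prop{convex_hessian} gives convexity of $\hat f$ on $\Rn$.

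Next, I would observe that on $\US$ the gradient $\nabla \hat f(\Vx) = m(\TA\Vx^{m-1} + \alpha\Vx)$ is a positive multiple of the unnormalized update $\hat\Vx_{k+1}$ from \Alg{sshopm}, so the iteration reads $\Vx_{k+1} = \nabla \hat f(\Vx_k)/\|\nabla \hat f(\Vx_k)\|$. This gradient never vanishes because $\Vx_k\Tra \nabla \hat f(\Vx_k)/m = f(\Vx_k) + \alpha \ge \alpha - \beta(\TA)/(m-1) > 0$ by \Lem{f_bound}. Applying \Thm{cvx} yields that either $\Vx_{k+1} = \Vx_k$ or $\hat f(\Vx_{k+1}) > \hat f(\Vx_k)$; since $\hat f = f + \alpha$ on $\US$, this is equivalent to $\lambda_{k+1} > \lambda_k$. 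Together with the bound $|\lambda_k| \le \beta(\TA)/(m-1)$ from \Lem{f_bound}, the sequence $\{\lambda_k\}$ is nondecreasing and bounded, hence convergent to some $\lambda_*$, which proves (a). Part (b) is immediate from compactness of $\US$.

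For (c), let $\Vx_*$ be an accumulation point and let $\phi$ denote the continuous iteration map (continuous on $\US$ because its denominator is bounded away from zero by the previous estimate). Extracting $\Vx_{k_j}\to \Vx_*$ gives $\Vx_{k_j+1} = \phi(\Vx_{k_j}) \to \phi(\Vx_*)$, while $\lambda_{k_j+1} \to \lambda_*$ and continuity of $f$ force $\hat f(\phi(\Vx_*)) = \hat f(\Vx_*)$. \Thm{cvx} then precludes $\phi(\Vx_*) \ne \Vx_*$, so $\phi(\Vx_*) = \Vx_*$, which rearranges to $\TA\Vx_*^{m-1} + \alpha\Vx_* = c\Vx_*$ for some $c > 0$, i.e., $\TA\Vx_*^{m-1} = (c - \alpha)\Vx_*$; contracting with $\Vx_*$ identifies $c - \alpha = f(\Vx_*) = \lambda_*$, so $(\lambda_*,\Vx_*)$ is an eigenpair.

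For (d), a short subsequence argument using (c) and continuity of $\phi$ shows $\|\Vx_{k+1} - \Vx_k\| \to 0$: a violation would yield, after passing to a convergent subsubsequence $\Vx_{k_j} \to \Vx_*$, the contradiction $\Vx_{k_j+1} \to \phi(\Vx_*) = \Vx_*$. A standard Ostrowski-type result then says that the accumulation set of a bounded sequence with vanishing successive differences is connected; combined with (c) and the assumption of finitely many real eigenvectors, this set is both finite and connected, hence a single point, which is the limit. The main obstacle I expect is the convexity verification for $\hat f$: although the argument is routine once the Hessian bound is identified, one must be careful that $(\Vx\Tra\Vx)^{m/2}$ is only $C^1$ at the origin when $m$ is odd, so a short continuity step is needed to lift semidefiniteness of the Hessian from $\Rn\setminus\{\V{0}\}$ to all of $\UB$ before \Thm{cvx} can be invoked uniformly along the iteration.
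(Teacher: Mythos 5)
Your proposal is correct and follows essentially the same strategy as the paper's proof: convexity of $\hat f$ via the Hessian bound of \Lem{H_bound}, monotonicity and boundedness of $\{\lambda_k\}$ via \Thm{cvx} and \Lem{f_bound}, compactness of $\US$ for accumulation points, and isolation of the finitely many eigenvectors plus $\|\Vx_{k+1}-\Vx_k\|\to 0$ for convergence of $\{\Vx_k\}$. The only tactical differences are in (c), where you use continuity of the iteration map together with the strictness in \Thm{cvx} to show every accumulation point is a fixed point, whereas the paper passes to the limit in $\|\hat g(\Vx_k)\| - \hat g(\Vx_k)\Tra\Vx_k \to 0$ and invokes the Cauchy--Schwarz equality case; and in (d), where you obtain the vanishing of successive differences by a subsequence/continuity argument and cite Ostrowski's connectedness lemma, whereas the paper reads the decay off the explicit gradient identity and carries out the neighborhood-removal argument by hand---both variants are sound. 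Your worry about $(\Vx\Tra\Vx)^{m/2}$ at the origin is slightly overstated (for $m\ge 3$ it is $C^2$ with vanishing Hessian there, which is exactly how the paper disposes of the point $\Vx=\V{0}$), but your continuity fix handles it in any case.
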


\begin{proof}
  Our analysis depends on the modified function $\hat f$ defined in
  \Eqn{hatf}. Its gradient and Hessian for $\Vx \ne \V{0}$ are 
  \begin{align}
    \hat g(\Vx) &\equiv \nabla \hat f(\Vx) = 
    g(\Vx) + m \alpha (\Vx\Tra\Vx)^{m/2-1} \Vx,\\
    \hat H(\Vx) &\equiv \nabla^2 \hat f(\Vx) = 
    H(\Vx) + m \alpha (\Vx\Tra\Vx)^{m/2-1} \M{I} + 
    m(m - 2) \alpha (\Vx\Tra\Vx)^{m/2-2} \Vx\Vx\Tra,
  \end{align}
  where $g$ and $H$ are the gradient and Hessian of $f$ from
  \Lem{g} and \Lem{H}, respectively. And because $\hat f(\Vx) = O(\|\Vx\|^m)$,
  as $\Vx \to \V{0}$, it follows that $\hat f(\Vx)$ is of third or higher order
  in $\Vx$ for $m \ge 3$; thus $\hat g(\V{0}) = \V{0}$ and
  $\hat H(\V{0}) = \M{0}$.
  
  Because it is important for the entire proof, we first show that $\hat f$
  is convex on $\Rn$ for
  $\alpha > \beta(\TA)$. As noted, if $\Vx = \V{0}$, we have $\hat H(\Vx) = \M{0}$ for $m \ge 3$.
  Consider nonzero $\Vx \in \Rn$ and 
  define $\bar\Vx = \Vx / \| \Vx \| \in \US$; then $\hat H(\Vx)$ is positive
  semidefinite (in fact, positive definite) by \Lem{H_bound} since
  \begin{align*}
    \Vy\Tra \hat H(\Vx)\Vy 
    & =  \|\Vx\|^{m-2} \left( \Vy \Tra H(\bar\Vx) \Vy
      + m \alpha + m(m-1) \alpha (\bar\Vx\Tra\Vy)^2 \right) \\
    & \geq \|\Vx\|^{m-2} \left( -m\beta(\TA) + m \alpha + 0 \right) > 0
   \qtext{for all} \Vy \in \US.
  \end{align*}
  By \Prop{convex_hessian}, $\hat f$ is convex on $\Rn$ because its
  Hessian is positive semidefinite.
    
  We also note that $-\alpha$ must be an eigenvalue of $\TA$ if $\hat g(\Vx) =
  \V{0}$ for some $\Vx \in \US$, since 
  \begin{displaymath}
    \hat g(\Vx) = \V{0} 
    \qtext{implies}
    \TA \Vx^{m-1} + \alpha \Vx = \V{0}.
  \end{displaymath}
  By \Lem{f_bound}, choosing $\alpha > \beta(\TA)$ ensures that $\alpha$
  is greater than the magnitude of any eigenvalue, and so $\hat g(\Vx)
  \neq \V{0}$ for all $\Vx \in \US$. This ensures that the update in
  \Alg{sshopm}, which reduces to
  \begin{equation}
    \label{eq:xkp1}
    \Vx_{k+1} = \frac{\hat g(\Vx_k)}{\| \hat g(\Vx_k) \|}
  \end{equation}
  in the convex case, is always well defined.

  \begin{asparaenum}[(a)]
  \item \label{main1} %(a)
  Since $\hat f$ is convex on $\UB$ and $\Vx_{k+1}, \Vx_k
    \in \US$ and $\Vx_{k+1} = \nabla \hat f(\Vx_k) / \| \nabla \hat
    f(\Vx_k) \|$, \Thm{cvx} yields
    \begin{displaymath}
      \lambda_{k+1} - \lambda_k 
      %= f(\Vx_{k+1}) - f(\Vx_k) 
      = \hat f(\Vx_{k+1}) - \hat f(\Vx_k)  \geq 0,
    \end{displaymath}
    where the nonstrict inequality covers the possibility that $\Vx_{k+1} = \Vx_k$.
    Thus, $\{\lambda_k\}$ is a nondecreasing sequence. 
    By \Lem{f_bound}, $\lambda_k = f(\Vx_k)$ is bounded, so the sequence
    must converge to a limit point $\lambda_*$.\footnote{Note that
      the similar approach proposed for ICA
      \cite[Theorem 2]{ReKo03} allows the shift $\alpha$ to vary at
      each iteration so long as the underlying function remains convex.}
  \item %(b) 
    Since $\{\Vx_k\}$ is an infinite sequence on a compact set $\US$, it
    must have an accumulation point $\Vx_* \in \US$ by the Bolzano-Weierstrass theorem.
    Note also that continuity of $f$ implies that $\lambda_* = \TA\Vx_*^m$.
  \item %(c)
    By part (\ref*{main1}) of the proof, convexity of $\hat f$,
    and \Prop{convex_gradient}, we have 
    \begin{displaymath}
      \hat f(\Vx_{k+1}) - \hat f(\Vx_k) \rightarrow 0
    \end{displaymath}
    and thus
    \begin{displaymath}
      \hat g(\Vx_k)\Tra (\Vx_{k+1} - \Vx_k) 
      \rightarrow 0.
    \end{displaymath}
    Using \Eqn{xkp1}, we can rewrite the above formula as
    \begin{equation}
      \label{eq:gxk}
      \|\hat g(\Vx_k)\| - 
      \hat g(\Vx_k)\Tra \Vx_k 
      \rightarrow 0.
    \end{equation}
    By continuity of $\hat g$, an accumulation point $\Vx_*$ must satisfy
    \begin{equation}
      \label{eq:key}
      \|\hat g(\Vx_*)\| - \hat g(\Vx_*)\Tra \Vx_* = 0,
    \end{equation}
    which implies
    \begin{displaymath}
      \|\hat g(\Vx_*)\| = \hat g(\Vx_*)\Tra \Vx_* = (m\, \TA\Vx_*^{m-1} + m\alpha\Vx_*)\Tra \Vx_* = m(\lambda_* + \alpha).
    \end{displaymath}
    Because $\Vx_* \in \US$, \Eqn{key} can hold only if
    \begin{displaymath}
      \Vx_* = \frac{\hat g(\Vx_*)}{\| \hat g(\Vx_*) \|} = \frac{m\, \TA\Vx_*^{m-1} + m\alpha\Vx_*}{m(\lambda_* + \alpha)},
    \end{displaymath}
    that is,
    \begin{displaymath}
      \TA\Vx_*^{m-1} = \lambda_* \Vx_*.
    \end{displaymath}
    Hence $(\lambda_*, \Vx_*)$ is an eigenpair of $\TA$.
  \item %(d)-------
    Equation \Eqn{gxk} gives
    \begin{displaymath}
      \|\hat g(\Vx_k)\| (1 - \Vx_{k+1}\Tra \Vx_k) \to 0.
    \end{displaymath}
    Because $\|\hat g(\Vx_k)\|$ is bounded away from 0 and because $\Vx_k,\Vx_{k+1} \in \US$,
    this requires that
    \begin{equation}
      \label{eq:xk0}
      \|\Vx_k - \Vx_{k+1}\| \to 0.
    \end{equation}
    Recall that every accumulation point of $\{\Vx_k\}$ must be a (real)
    eigenvector of $\TA$. If these eigenvectors are
    finite in number and thus isolated, consider removing an arbitrarily small open neighborhood of each
    from $\US$, leaving a closed and thus compact space $Y \subset
    \US$ containing no accumulation points of $\{\Vx_k\}$. If $\{\Vx_k\}$ had infinitely many iterates in $Y$,
    it would have an accumulation point in $Y$ by the Bolzano-Weierstrass theorem, creating a contradiction.
    Therefore at most finitely many iterates are in $Y$, and $\{\Vx_k\}$ is ultimately confined to
    arbitrarily small neighborhoods of the eigenvectors.
    By \Eqn{xk0}, however, $\|\Vx_k - \Vx_{k+1}\|$ eventually remains smaller than the minimum distance between
    any two of these neighborhoods.
    Consequently, the iteration ultimately cannot jump from one neighborhood to another, and so
    in the limit $\{\Vx_k\}$ is confined to an arbitrarily small neighborhood of a \emph{single} eigenvector
    $\Vx_*$, to which it therefore converges.
  \end{asparaenum}  
  Hence, the proof is complete.
\end{proof}

Note that the condition of finitely many real eigenvectors in part (\ref*{finite}) holds for generic tensors.
We conjecture that the convergence of $\{\Vx_k\}$ is guaranteed even without this condition.

\begin{example}
  Again consider $\TA \in \RT{4}{3}$ from \Ex{KoRe02_ex1}.
  We show
  results using a shift of $\alpha = 2$. 
  We ran 100 trials of SS-HOPM using
  the experimental conditions described in \Ex{KoRe02_ex1}. 
  We found 3 real eigenpairs; the results are
  summarized in \Tab{sshopm-convex}. Three example runs (one for each
  eigenvalue) are shown in \Fig{sshopm-convex}. 

  We also considered the ``conservative'' choice of $\alpha = (m-1)
  \sum_{i_1,\dots,i_m} |\TE{A}{i_1\dots i_m}| = 55.6620$. We ran 100
  trials of SS-HOPM using the experimental conditions described in
  \Ex{KoRe02_ex1}, except that we increased the maximum number of
  iterations to 10,000. Every trial converged to one of the same 3
  real eigenpairs, but the number of iterations was around
  1000 (versus around 60 for $\alpha =2$); in \Sec{fp}, we
  see that the rate of convergence asymptotically decreases as
  $\alpha$ increases.

  Analogous results are shown for $\TA \in \RT{3}{3}$ from \Ex{odd}
  with a shift of $\alpha=1$ in \Tab{sshopm-odd-convex} and
  \Fig{sshopm-odd-convex}. Here SS-HOPM finds 2 additional eigenpairs
  compared to S-HOPM\@. In this case, we also considered $\alpha = (m-1)
  \sum_{i_1,\dots,i_m} |\TE{A}{i_1\dots i_m}| = 9.3560$, but this
  again increased the number of iterations up to a factor of ten.
  
  For both tensors,
  $\{\lambda_k\}$ is always a nondecreasing sequence.
  Observe further that SS-HOPM converges only to eigenpairs
  that are negative stable. 
\end{example}

\begin{table}[htbp]
  \centering
  \subfloat[$\TA\in\RT{4}{3}$ from \Ex{KoRe02_ex1} with $\alpha=2$.]{
    \footnotesize
    \begin{tabular}{|c|c|c|c|} \hline
%% Number of trials = 100
%% Number of failures = 0
\# Occurrences & $\lambda$ & $\Vx$ & Median Its. \\ \hline
  46 &  $\phantom{-}0.8893$  & [ $\phantom{-}0.6672$  $\phantom{-}0.2471$  $-0.7027$ ] &   63 \\ \hline 
  24 &  $\phantom{-}0.8169$  & [ $\phantom{-}0.8412$  $-0.2635$  $\phantom{-}0.4722$ ] &   52 \\ \hline 
  30 &  $\phantom{-}0.3633$  & [ $\phantom{-}0.2676$  $\phantom{-}0.6447$  $\phantom{-}0.7160$ ] &   65 \\ \hline 
    \end{tabular}
    \label{tab:sshopm-convex}
  }

  \subfloat[$\TA\in\RT{3}{3}$ from \Ex{odd} with $\alpha=1$.]{
    \footnotesize
    \begin{tabular}{|c|c|c|c|} \hline
      %% Number of trials = 100
      %% Number of failures = 0
      \# Occurrences & $\lambda$ & $\Vx$ & Median Its. \\ \hline
      40 &  $\phantom{-}0.8730$  & [ $-0.3922$  $\phantom{-}0.7249$  $\phantom{-}0.5664$ ] &   32 \\ \hline 
      29 &  $\phantom{-}0.4306$  & [ $-0.7187$  $-0.1245$  $-0.6840$ ] &   48 \\ \hline 
      18 &  $\phantom{-}0.0180$  & [ $\phantom{-}0.7132$  $\phantom{-}0.5093$  $-0.4817$ ] &  116 \\ \hline 
      13 &  $-0.0006$  & [ $-0.2907$  $-0.7359$  $\phantom{-}0.6115$ ] &  145 \\ \hline 
    \end{tabular}
    \label{tab:sshopm-odd-convex}
  }
  \caption{Eigenpairs computed by SS-HOPM (convex) with 100 random starts.} 
\end{table}

\begin{figure}[htbp]
  \centering
  \subfloat[$\TA\in\RT{4}{3}$ from \Ex{KoRe02_ex1} with $\alpha=2$.]%
  {\label{fig:sshopm-convex}
    \includegraphics[width=.49\textwidth]{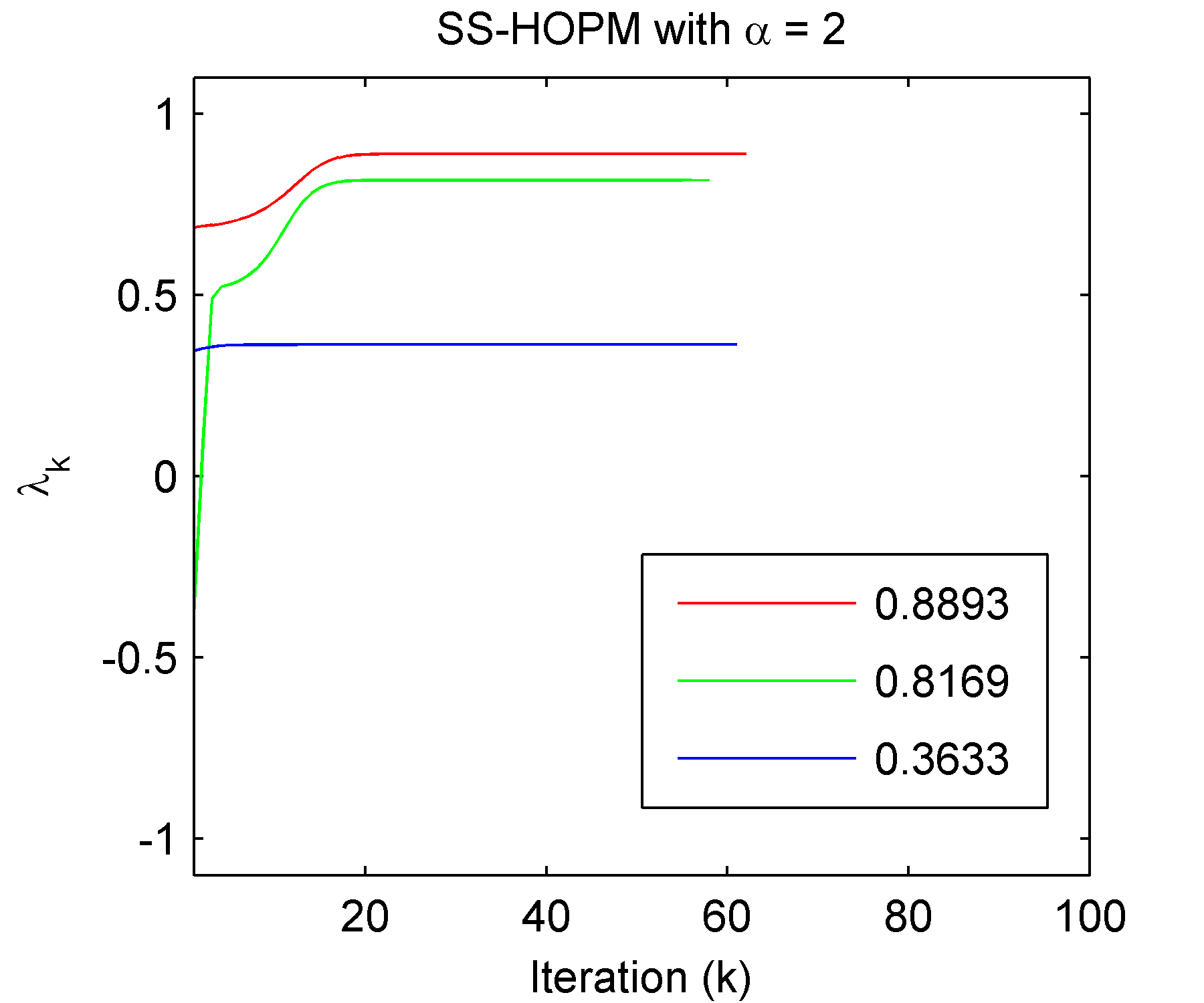}}
  \subfloat[$\TA\in\RT{3}{3}$ from \Ex{odd} with $\alpha=1$.]%
  {\label{fig:sshopm-odd-convex}
    \includegraphics[width=.49\textwidth]{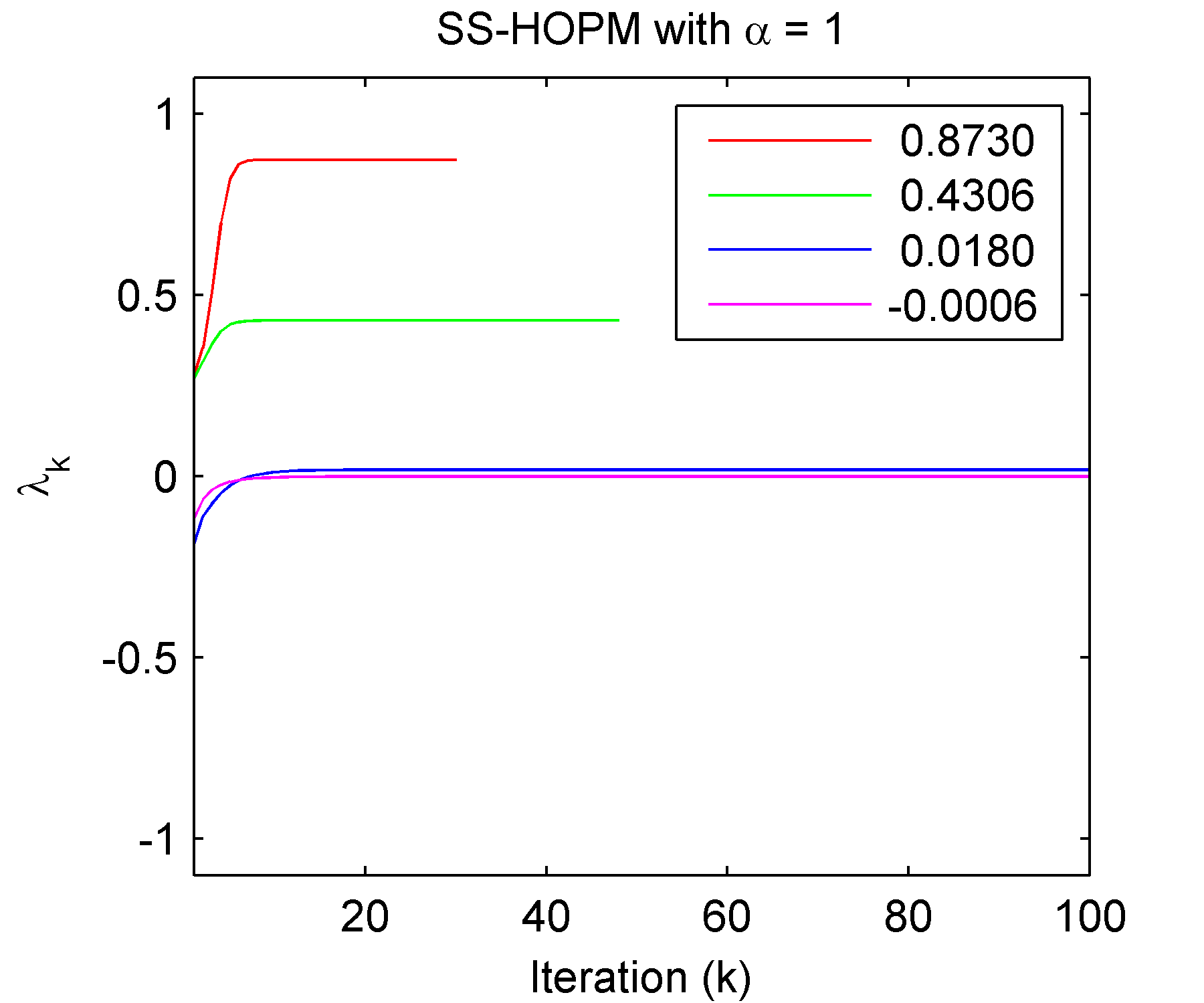}}
  \caption{Example $\lambda_k$ values for SS-HOPM (convex). 
      One sequence is shown for each distinct eigenvalue.}
\end{figure}

Using a large enough negative value of $\alpha$ makes $\hat f$
concave. It was observed \cite{KoRe02} that $f(\Vx) =
f(-\Vx)$ for even-order tensors and so the sequence $\{\lambda_k\}$
converges regardless of correctly handling the minus sign. The only
minor problem in the concave case
is that the sequence of iterates $\{\Vx_k\}$ does not converge.
This is easily fixed, however, by correctly handling the sign as we do in
\Alg{sshopm}.  The corresponding theory for the concave case is
presented in \Cor{main}.  In this case
we choose $\alpha$ to be negative, i.e., the theory suggests $\alpha <
-\beta(\TA)$.

\begin{corollary}
  \label{cor:main}
  Let $\TA \in \RT{m}{n}$ be symmetric.  For $\alpha <
  -\beta(\TA)$, where $\beta(\TA)$ is defined in \Eqn{beta},
  the iterates $\{\lambda_k,\Vx_k\}$ produced by \Alg{sshopm} satisfy the
  following properties. 
  \begin{inparaenum}[(a)]
  \item The sequence $\{\lambda_k\}$ is nonincreasing, and there exists
    $\lambda_*$ such that $\lambda_k \rightarrow \lambda_*$. 
  \item The sequence $\{\Vx_k\}$ has an accumulation point.
  \item For any such accumulation point $\Vx_*$, the pair $(\lambda_*,\Vx_*)$ is an eigenpair of $\TA$.
  \item If the eigenvalues of $\TA$ are isolated, then $\Vx_k \to \Vx_*$.
  \end{inparaenum}
\end{corollary}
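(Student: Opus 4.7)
The plan is to deduce this corollary directly from Theorem \ref{thm:main} by a sign change. Observe that the concave branch of Algorithm \ref{alg:sshopm} performs the update
\begin{displaymath}
\hat\Vx_{k+1} = -(\TA\Vx_k^{m-1} + \alpha \Vx_k) = \TA'\Vx_k^{m-1} + \alpha' \Vx_k,
\end{displaymath}
where $\TA' \equiv -\TA$ and $\alpha' \equiv -\alpha$. This is identical to the convex branch of Algorithm \ref{alg:sshopm} applied to $\TA'$ with shift $\alpha'$. The first step, then, is to verify that the hypothesis of Theorem \ref{thm:main} is met for this reduction, i.e., that $\alpha' > \beta(\TA')$. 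This follows from $\beta(\TA') = \beta(-\TA) = \beta(\TA)$, since $\rho((-\TA)\Vx^{m-2}) = \rho(-(\TA\Vx^{m-2})) = \rho(\TA\Vx^{m-2})$ for every $\Vx \in \US$; combined with the assumption $\alpha < -\beta(\TA)$, we get $\alpha' = -\alpha > \beta(\TA) = \beta(\TA')$.

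Next I would translate each conclusion of Theorem \ref{thm:main} back to $\TA$. Writing $\lambda'_k \equiv \TA'\Vx_k^m = -\lambda_k$, part (a) of that theorem gives that $\{\lambda'_k\}$ is nondecreasing and converges to some $\lambda'_*$; hence $\{\lambda_k\}$ is nonincreasing and converges to $\lambda_* \equiv -\lambda'_*$. Part (b) transfers verbatim since the sequence $\{\Vx_k\}$ is the same. For part (c), if $\Vx_*$ is an accumulation point then by Theorem \ref{thm:main}(c), $(\lambda'_*, \Vx_*)$ is an eigenpair of $\TA'$, that is, $\TA'\Vx_*^{m-1} = \lambda'_*\Vx_*$, which rearranges to $\TA\Vx_*^{m-1} = \lambda_*\Vx_*$ — an eigenpair of $\TA$. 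For part (d), the eigenvectors of $\TA$ and $\TA' = -\TA$ coincide, so the finiteness (or isolation) hypothesis transfers unchanged, and Theorem \ref{thm:main}(d) delivers convergence of $\{\Vx_k\}$ to a single $\Vx_*$.

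There is essentially no obstacle here; the only care needed is the bookkeeping of which quantities flip sign (the $\lambda_k$, the tensor, and the shift) and which do not (the iterates $\Vx_k$ and the quantity $\beta$). In particular, one should explicitly note that $\beta(\TA) = \beta(-\TA)$ so that the shift condition is genuinely symmetric, and that the eigenpairs of $\TA$ and $-\TA$ are in the sign-reversed correspondence $(\lambda,\Vx) \leftrightarrow (-\lambda,\Vx)$ so that accumulation points are correctly identified as eigenvectors of $\TA$ itself.
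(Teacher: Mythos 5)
Your proposal is correct and is essentially the paper's own argument: the paper's one-line proof (``apply the proof of Theorem~\ref{thm:main} with $f(\Vx) = -\TA\Vx^m$'') is exactly your sign-flip reduction, since replacing $f$ by $-f$ is the same as replacing $\TA$ by $-\TA$ and $\alpha$ by $-\alpha$. You simply make explicit the bookkeeping the paper leaves implicit, most usefully the check that $\beta(-\TA) = \beta(\TA)$ so the shift hypothesis transfers.
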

\begin{proof}
  Apply the proof of \Thm{main} with $f(\Vx) = - \TA\Vx^m$. 
\end{proof}

\begin{example}
  Revisiting $\TA\in\RT{4}{3}$ in \Ex{KoRe02_ex1} again, we run another 100 trials using
  $\alpha=-2$.  We find 3 (new) real eigenpairs; the results are
  summarized in \Tab{sshopm-concave}.  Three
  example runs (one for each eigenvalue) are shown in
  \Fig{sshopm-concave}.

  We also revisit $\TA \in \RT{3}{3}$ from \Ex{odd} and use
  $\alpha=-1$. In this case, we find the opposites, i.e.,
  $(-\lambda,-\Vx)$, of the eigenpairs found with $\alpha = 1$, as
  shown in \Tab{sshopm-odd-concave}. This is to be expected for
  odd-order tensors since there is symmetry, i.e., $f(\Vx) =
  -f(-\Vx)$, $C(\lambda,\Vx) = -C(-\lambda,-\Vx)$, etc.  Observe that
  the median number of iterations is nearly unchanged; this is
  explained in the subsequent subsection where we discuss the rate of convergence. Four example
  runs (one per eigenvalue) are shown in \Fig{sshopm-odd-concave}. 
  
  The sequence $\{\lambda_k\}$ is
  nonincreasing in every case.  Each of the eigenpairs found in the
  concave case is positive stable.
\end{example}

\begin{table}[htbp]
  \centering
  \subfloat[$\TA\in\RT{4}{3}$ from \Ex{KoRe02_ex1} with $\alpha=-2$.]{
    \footnotesize
    \begin{tabular}{|c|c|c|c|} \hline
%% Number of trials = 100
%% Number of failures = 0
\# Occurrences & $\lambda$ & $\Vx$ & Median Its. \\ \hline
  15 &  $-0.0451$  & [ $-0.7797$  $-0.6135$  $-0.1250$ ] &   35 \\ \hline 
  40 &  $-0.5629$  & [ $-0.1762$  $\phantom{-}0.1796$  $-0.9678$ ] &   23 \\ \hline 
  45 &  $-1.0954$  & [ $-0.5915$  $\phantom{-}0.7467$  $\phantom{-}0.3043$ ] &   23 \\ \hline 
    \end{tabular}
    \label{tab:sshopm-concave}
  }

  \subfloat[$\TA\in\RT{3}{3}$ from \Ex{odd} with $\alpha=-1$.]{
    \footnotesize
    \begin{tabular}{|c|c|c|c|} \hline
%% Number of trials = 100
%% Number of failures = 0
\# Occurrences & $\lambda$ & $\Vx$ & Median Its. \\ \hline
  19 &  $\phantom{-}0.0006$  & [ $\phantom{-}0.2907$  $\phantom{-}0.7359$  $-0.6115$ ] &  146 \\ \hline 
  18 &  $-0.0180$  & [ $-0.7132$  $-0.5093$  $\phantom{-}0.4817$ ] &  117 \\ \hline 
  29 &  $-0.4306$  & [ $\phantom{-}0.7187$  $\phantom{-}0.1245$  $\phantom{-}0.6840$ ] &   49 \\ \hline 
  34 &  $-0.8730$  & [ $\phantom{-}0.3922$  $-0.7249$  $-0.5664$ ] &   33 \\ \hline 
    \end{tabular}
    \label{tab:sshopm-odd-concave}
  }
  \caption{Eigenpairs computed by SS-HOPM (concave) with 100 random starts.} 
\end{table}

\begin{figure}[htbp]
  \centering
  \subfloat[$\TA\in\RT{4}{3}$ from \Ex{KoRe02_ex1} with $\alpha=2$.]%
  {\label{fig:sshopm-concave}
    \includegraphics[width=.49\textwidth]{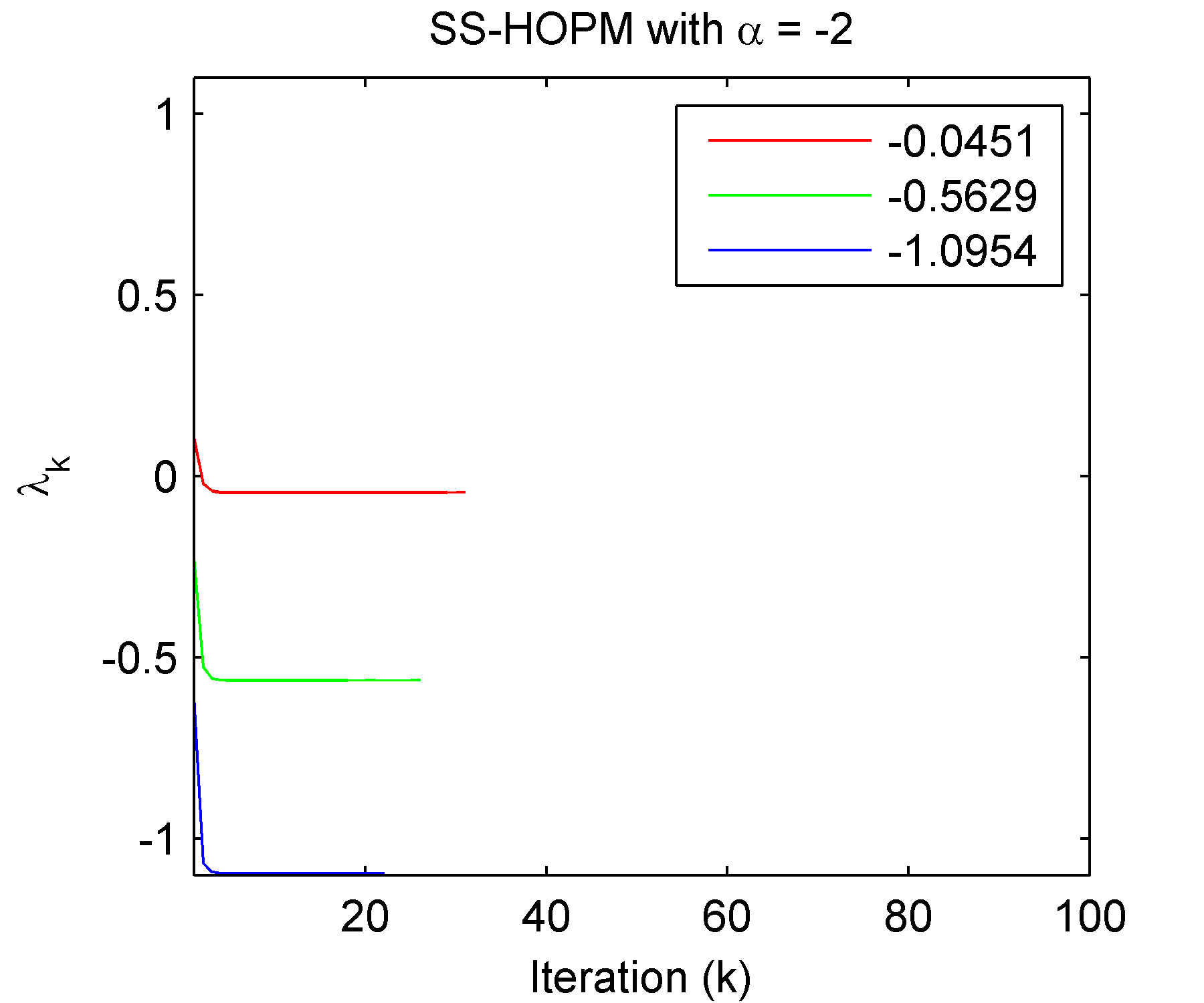}}
  \subfloat[$\TA\in\RT{3}{3}$ from \Ex{odd} with $\alpha=1$.]%
  {\label{fig:sshopm-odd-concave}
    \includegraphics[width=.49\textwidth]{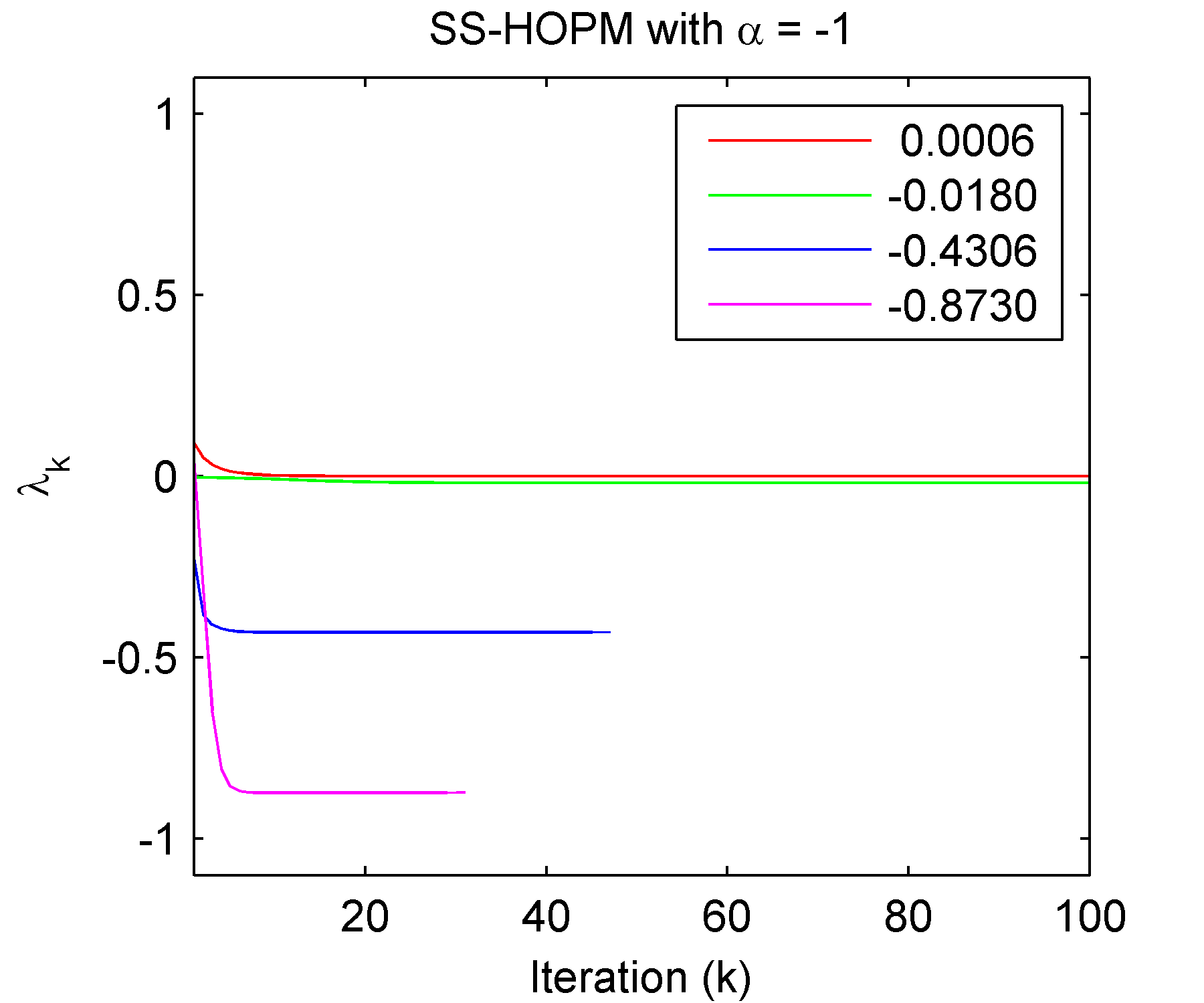}}
  \caption{Example $\lambda_k$ values for SS-HOPM (concave). 
      One sequence is shown for each distinct eigenvalue.}
\end{figure}

%% --------------------------------------------------
%% Fixed point analysis of SSHOPM
%% --------------------------------------------------
\subsection{SS-HOPM fixed point analysis}
\label{sec:fp}
In this section, we show that fixed point analysis allows us to easily
characterize convergence to eigenpairs according to whether they are positive stable,
negative stable, or unstable.
The convex version of SS-HOPM will generically converge to
eigenpairs that are negative stable;
the concave version will generically converge
to eigenpairs that are positive stable.

To justify these conclusions, we consider \Alg{sshopm} in the convex case
as a fixed point iteration $\Vx_{k+1} =
\phi(\Vx_k;\alpha)$, where $\phi$ is defined as
\begin{equation}\label{eq:phi}
  \phi(\Vx;\alpha) = \phi_1(\phi_2(\Vx;\alpha)) 
  \text{ with }
  \phi_1(\Vx) = \frac{\Vx}{(\Vx\Tra\Vx)^{\frac{1}{2}}} 
  \text{ and }
  \phi_2(\Vx;\alpha) = \TA \Vx^{m-1} + \alpha \Vx.
\end{equation}
Note that an eigenpair $(\lambda, \Vx)$ is a fixed point
if and only if $\lambda + \alpha > 0$, which is always true for $\alpha > \beta(\TA)$.

From \cite{Fa05}, the Jacobian of the operator $\phi$ is
\begin{displaymath}
  J(\Vx;\alpha) = \phi_1'(\phi_2(\Vx;\alpha)) \phi_2'(\Vx;\alpha),
\end{displaymath}
where derivatives are taken with respect to $\Vx$ and
\begin{displaymath}
  \phi_1'(\Vx) = \frac{(\Vx\Tra\Vx) \M{I} - \Vx
    \Vx\Tra}{(\Vx\Tra\Vx)^{\frac{3}{2}}}
  \qtext{and}
  \phi_2'(\Vx;\alpha) = (m-1)\TA\Vx^{m-2} + \alpha \M{I}.
\end{displaymath}
At any eigenpair $(\lambda, \Vx)$, we have
\begin{gather*}
  \phi_2(\Vx;\alpha) = (\lambda + \alpha)\Vx
  , \quad
  \phi_1'(\phi_2(\Vx;\alpha)) = \frac{(\M{I} - \Vx\Vx\Tra)}{\lambda + \alpha}
  , \\ 
  \qtext{and}
  \phi_2'(\Vx;\alpha) = (m-1) \TA \Vx^{m-2} + \alpha \M{I}.
\end{gather*}
Thus, the Jacobian at $\Vx$ is 
\begin{equation}
  \label{eq:J}
  J(\Vx;\alpha) = \frac{(m-1)(\TA\Vx^{m-2} - \lambda \Vx\Vx\Tra) +
    \alpha(\M{I} - \Vx\Vx\Tra)}{\lambda + \alpha}.
\end{equation}
Observe that the Jacobian is symmetric.

\begin{theorem}
  \label{thm:fp}
  Let $(\lambda,\Vx)$ be an eigenpair of a symmetric tensor $\TA \in \RT{m}{n}$. 
  Assume $\alpha \in \Real$ such that $\alpha > \beta(\TA)$, where
  $\beta(\TA)$ is as defined in \Eqn{beta}.
  Let $\phi(\Vx)$ be given by \Eqn{phi}.
  Then $(\lambda,\Vx)$ is negative stable if and only if
  $\Vx$ is a linearly attracting fixed point of $\phi$.
\end{theorem}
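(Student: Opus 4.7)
The plan is to apply Theorem~\ref{thm:fixed_point}, which reduces ``linearly attracting'' to the spectral condition $\rho(J(\Vx;\alpha)) < 1$, and to diagonalize $J(\Vx;\alpha)$ relative to the decomposition $\Rn = \mathrm{span}(\Vx) \oplus \Vx^{\bot}$. First, I would verify directly from \Eqn{J} that $J(\Vx;\alpha)\Vx = \V{0}$, using $\TA\Vx^{m-1} = \lambda\Vx$ and $\Vx\Tra\Vx = 1$, so $\Vx$ is an eigenvector with eigenvalue $0$. Next, a short computation shows that $J(\Vx;\alpha)$ maps $\Vx^{\bot}$ into itself (in fact $J$ is symmetric), and on this subspace it coincides with the operator
\begin{displaymath}
  \frac{(m-1)\TA\Vx^{m-2} + \alpha\M{I}}{\lambda + \alpha}
\end{displaymath}
restricted to $\Vx^{\bot}$. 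Writing this restriction in the orthonormal basis $\M{U}_*$ and comparing with \Eqn{C} gives the identity
\begin{displaymath}
  \M{U}_*\Tra J(\Vx;\alpha) \M{U}_* \;=\; \M{I}_{n-1} + \frac{C(\lambda,\Vx)}{\lambda + \alpha}.
\end{displaymath}

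From here the argument is algebraic. By \Lem{f_bound}, $|\lambda| \le \beta(\TA)/(m-1) < \beta(\TA) < \alpha$, so $\lambda + \alpha > 0$ and the expression above is well defined. If $\mu_1,\dots,\mu_{n-1}$ denote the eigenvalues of $C(\lambda,\Vx)$, then the eigenvalues of $J(\Vx;\alpha)$ are $0$ together with the values $1 + \mu_i/(\lambda + \alpha)$. Since $\lambda + \alpha > 0$, the condition $\rho(J(\Vx;\alpha)) < 1$ is equivalent to
\begin{displaymath}
  -2(\lambda + \alpha) \;<\; \mu_i \;<\; 0 \qtext{for all} i = 1,\dots,n-1.
\end{displaymath}
The upper bound $\mu_i < 0$ is precisely the statement that $C(\lambda,\Vx)$ is negative definite, i.e., that $(\lambda,\Vx)$ is negative stable. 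Thus the theorem reduces to showing that the lower bound $\mu_i > -2(\lambda + \alpha)$ is automatic under the hypothesis $\alpha > \beta(\TA)$.

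The main obstacle is this automatic lower bound, and here is how I would dispatch it. By Cauchy interlacing, the eigenvalues of $\M{U}_*\Tra (m-1)\TA\Vx^{m-2} \M{U}_*$ lie in $[-\beta(\TA), \beta(\TA)]$, so every eigenvalue of $C(\lambda,\Vx) = \M{U}_*\Tra (m-1)\TA\Vx^{m-2} \M{U}_* - \lambda\M{I}_{n-1}$ satisfies $\mu_i \ge -\beta(\TA) - \lambda$. Combined with $\lambda \ge -\beta(\TA)/(m-1)$ from \Lem{f_bound} and $\alpha > \beta(\TA)$, we get
\begin{displaymath}
  2(\lambda + \alpha) \;>\; 2\lambda + 2\beta(\TA) \;\ge\; \beta(\TA) + \lambda + \bigl(\beta(\TA) + \lambda\bigr) \;\ge\; \beta(\TA) + \lambda \;\ge\; -\mu_i,
\end{displaymath}
where the middle inequality uses $\beta(\TA) + \lambda \ge \beta(\TA) - \beta(\TA)/(m-1) > 0$ for $m \ge 3$. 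Hence $\mu_i > -2(\lambda + \alpha)$ holds automatically, and the spectral-radius condition $\rho(J(\Vx;\alpha)) < 1$ is equivalent to $C(\lambda,\Vx) \prec 0$, which is exactly the definition of negative stability. Combining with \Thm{fixed_point} (for the ``if'' direction) and \Thm{unstable_fixed_point} together with the strictness of all inequalities (for the ``only if'' direction) completes the proof.
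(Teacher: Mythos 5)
Your proof is correct and follows essentially the same route as the paper's: both exploit the symmetry of $J(\Vx;\alpha)$, discard the zero eigenvalue along $\Vx$, and show that on $\Vx^{\bot}$ the spectrum lies in $(-1,1)$ exactly when $C(\lambda,\Vx) \prec 0$, with the lower bound automatic from $\alpha > \beta(\TA)$ via \Lem{f_bound}. The only cosmetic difference is that you phrase the argument through the eigenvalues of $\M{U}_*\Tra J \M{U}_* = \M{I} + C(\lambda,\Vx)/(\lambda+\alpha)$ while the paper bounds the quadratic forms $\Vy\Tra J(\Vx;\alpha)\Vy$ directly; these are equivalent.
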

\begin{proof}
  Assume that $(\lambda, \Vx)$ is negative stable.
  The Jacobian $J(\Vx;\alpha)$ is given by \Eqn{J}.
  By \Thm{fixed_point},
  we need to show that  $\rho(J(\Vx;\alpha)) < 1$ or, equivalently since
  $J(\Vx;\alpha)$ is symmetric,
  $| \Vy\Tra J(\Vx;\alpha) \Vy | < 1$ for all $\Vy \in \US$. We restrict
  our attention to $\Vy \bot \Vx$ since 
  $J(\Vx;\alpha) \Vx = \V{0}$. 

  Let $\Vy \in \US$ with $\Vy \bot \Vx$. Then
  \begin{displaymath}
    |\Vy\Tra J(\Vx;\alpha) \Vy| = 
    \left| 
    \frac%
    { \Vy\Tra \left( (m-1) \TA\Vx^{m-2} \right) \Vy + \alpha  }%
    {\lambda + \alpha}
    \right|
  \end{displaymath}
  The assumption that $(\lambda,\Vx)$ is negative stable means that
  $C(\lambda,\Vx)$ is negative definite; therefore,
  \begin{inlinemath}
    \Vy\Tra \left((m-1)\TA\Vx^{m-2}\right) \Vy < \lambda.
  \end{inlinemath}
  On the other hand, by the definition of $\beta$, 
  \begin{displaymath}
    \rho\!\left((m-1)\TA\Vx^{m-2}\right) \leq \beta(\TA).
  \end{displaymath}
  Thus, using the fact that $\lambda + \alpha$ is positive, we have
  \begin{displaymath}
    0 <
    \frac{-\beta(\TA) + \alpha}{\lambda + \alpha}
    \leq \frac
    { \Vy\Tra \left( (m-1) \TA\Vx^{m-2} \right) \Vy + \alpha  }
    {\lambda + \alpha}
    <
    \frac{\lambda + \alpha}{\lambda + \alpha} = 1
  \end{displaymath}
  Hence, $\rho(J(\Vx;\alpha)) < 1$, and $\Vx$ is a linearly attracting fixed point.

  On the other hand, if $(\lambda,\Vx)$ is not negative stable, then there exists
  $\V{w} \in \US$ such that $\V{w} \bot \Vx$ and $\V{w}\Tra \left( (m-1) \TA\Vx^{m-2} \right)
  \V{w} \geq \lambda$. Thus,
  \begin{displaymath}
    \V{w}\Tra J(\Vx;\alpha) \V{w} = 
    \frac
    { \V{w}\Tra \left( (m-1) \TA\Vx^{m-2} \right) \V{w} + \alpha  }
    {\lambda + \alpha}
    \geq
    \frac{\lambda + \alpha}{\lambda + \alpha} = 1.
  \end{displaymath}
  Consequently, $\rho(J(\Vx;\alpha)) \geq 1$, and $\Vx$ is not a linearly attracting fixed point
  by \Thm{fixed_point} and \Thm{unstable_fixed_point}.
\end{proof}

In fact, we can see from the proof of \Thm{fp} that if the eigenpair $(\lambda,\Vx)$
is not negative stable, there is no choice of
$\alpha \in \Real$ that will make $\rho(J(\Vx;\alpha))<1$. For $\Vx$
to be a fixed point at all, we must have $\lambda + \alpha > 0$, and this is
sufficient to obtain $\rho(J(\Vx;\alpha)) \ge 1$ if $(\lambda,\Vx)$ is not negative stable.
In other words, smaller values of $\alpha$ do not induce ``accidental'' convergence
to any additional eigenpairs.

An alternative argument establishes, for $\alpha > \beta(\TA)$,
the slightly broader result that any
attracting fixed point, regardless of order of convergence, must be a strict constrained
local maximum of $f(\Vx) = \TA \Vx^m$ on $\US$. That is, the marginally attracting case corresponds
to a stationary point that has degenerate $C(\lambda,\Vx)$ but is still a maximum.
This follows from \Thm{cvx}, where the needed convexity holds for $\alpha > \beta(\TA)$, so that any
vector $\Vx' \in \US$ in the neighborhood of convergence of $\Vx$ must satisfy $f(\Vx') < f(\Vx)$.
One can convince oneself that the converse also holds for $\alpha > \beta(\TA)$, i.e., any strict
local maximum corresponds to an attracting fixed point. This is because the strict monotonicity of $f$
under iteration (other than at a fixed point) implies that the iteration acts as a contraction on
the region of closed contours of $f$ around the maximum.

The counterpart of \Thm{fp} for the concave case is as follows.

\begin{corollary}
  \label{cor:fp}
  Let $(\lambda,\Vx)$ be an eigenpair of a symmetric tensor $\TA \in \RT{m}{n}$. 
  Assume $\alpha \in \Real$ such that $\alpha < -\beta(\TA)$, where
  $\beta(\TA)$ is as defined in \Eqn{beta}.
  Let $\phi(\Vx)$ be given by \Eqn{phi}.
  Then $(\lambda,\Vx)$ is positive stable if and only if
  $\Vx$ is a linearly attracting fixed point of $-\phi$.
\end{corollary}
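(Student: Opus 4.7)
The natural plan is to reduce \Cor{fp} to \Thm{fp} by substituting $\TA \to \tilde{\TA} \equiv -\TA$ and $\alpha \to \tilde\alpha \equiv -\alpha$, paralleling the short proof of \Cor{main}. The first step is to check that this substitution preserves all the hypotheses and transforms the quantities in the expected way. Since $\rho((-\TA)\Vx^{m-2}) = \rho(\TA\Vx^{m-2})$, we have $\beta(\tilde\TA) = \beta(\TA)$; consequently the hypothesis $\alpha < -\beta(\TA)$ becomes $\tilde\alpha > \beta(\tilde\TA)$, which is exactly the hypothesis of \Thm{fp} applied to $\tilde\TA$.

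Next I would verify the correspondence of eigenpairs and of stability type. If $(\lambda,\Vx)$ is an eigenpair of $\TA$, then $\tilde\TA\Vx^{m-1} = -\lambda\Vx$, so $(\tilde\lambda,\Vx) \equiv (-\lambda,\Vx)$ is an eigenpair of $\tilde\TA$. The projected Hessian satisfies
\begin{displaymath}
C(\tilde\lambda,\Vx) = \M{U}\Tra\!\left((m-1)\tilde\TA\Vx^{m-2} - \tilde\lambda\M{I}\right)\!\M{U} = -C(\lambda,\Vx),
\end{displaymath}
so $(\lambda,\Vx)$ is positive stable with respect to $\TA$ if and only if $(\tilde\lambda,\Vx)$ is negative stable with respect to $\tilde\TA$.

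The third step is to identify the iteration maps. Writing $\tilde\phi$ for the map \Eqn{phi} built from $(\tilde\TA,\tilde\alpha)$, we have $\tilde\phi_2(\Vx;\tilde\alpha) = -\TA\Vx^{m-1} - \alpha\Vx = -\phi_2(\Vx;\alpha)$, and since $\phi_1$ is odd, $\tilde\phi(\Vx) = -\phi(\Vx;\alpha)$. Hence $\Vx$ is a linearly attracting fixed point of $-\phi$ for $(\TA,\alpha)$ if and only if it is a linearly attracting fixed point of $\tilde\phi$ for $(\tilde\TA,\tilde\alpha)$. Combining these three observations and invoking \Thm{fp} on $\tilde\TA$ with shift $\tilde\alpha > \beta(\tilde\TA)$ yields the desired equivalence.

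There is no real obstacle here; the only thing to be careful about is the bookkeeping of signs in translating between the $(\TA,\alpha)$ and $(-\TA,-\alpha)$ formulations, in particular checking that the Jacobian identity \Eqn{J} transforms consistently and that the $C$-matrix flips sign as claimed. If one preferred, one could instead re-derive the result from scratch by recomputing the Jacobian of $-\phi$ at $\Vx$ and observing that its spectral radius is less than $1$ on $\Vx^\bot$ precisely when $\Vy\Tra((m-1)\TA\Vx^{m-2})\Vy > \lambda$ for all $\Vy \in \US \cap \Vx^\bot$, i.e., when $C(\lambda,\Vx)$ is positive definite; but the reduction via \Thm{fp} avoids duplicating that calculation.
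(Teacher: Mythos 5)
Your reduction is correct and is precisely the argument the paper intends: \Cor{fp} is left without an explicit proof, but by analogy with the proof of \Cor{main} (``apply the proof with $f(\Vx) = -\TA\Vx^m$'') the intended route is exactly the substitution $(\TA,\alpha)\to(-\TA,-\alpha)$, and your checks that $\beta(-\TA)=\beta(\TA)$, that $C(-\lambda,\Vx)$ for $-\TA$ equals $-C(\lambda,\Vx)$, and that the resulting iteration map is $-\phi$ supply all the bookkeeping the paper omits. No gaps.
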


\begin{example}
  We return again to $\TA \in \RT{4}{3}$ as defined in \Ex{KoRe02_ex1}.
  \Fig{jac_KoRe02_ex1} shows the spectral radius of the Jacobian of
  the fixed point iteration for varying values of $\alpha$ for all
  eigenpairs that are positive or negative stable.
  At $\alpha=0$, the spectral radius is greater
  than 1 for every eigenvalue, and this is why S-HOPM never
  converges. At $\alpha=2$, on the other hand, we see that the
  spectral radius is less than 1 for all of the negative stable
  eigenpairs. Furthermore, the spectral radius stays less than 1 as
  $\alpha$ increases. Conversely, at $\alpha=-2$, the spectral radius
  is less than 1 for all the eigenpairs that are positive stable.

  In \Fig{rate_KoRe02_ex1}, we plot example iteration sequences for $\|\Vx_{k+1}-\Vx_*\|/\|\Vx_k - \Vx_*\|$ for each eigenpair, using $\alpha = 2$ for the negative stable eigenpairs and $\alpha = -2$ for the positive stable eigenpairs. 
We expect $\|\Vx_{k+1}-\Vx_*\| = \sigma \|\Vx_k - \Vx_*\|$ where
$\sigma$ is the spectral radius of the Jacobian $J(\Vx;\alpha)$.
For example, for $\lambda = -1.0954$, we have $\sigma=0.4$ (shown as a dashed line) and this precisely matched the observed rate of convergence (shown as a solid line).

  \Fig{sphere_f} plots  $f(\Vx)$ on the unit sphere using color to
  indicate function value. 
  We show the front and back of the sphere. Notice that the
  horizontal axis is from $1$ to $-1$ in the left plot and from $-1$ to $1$ in the
  right plot, as if walking around the sphere. In this image, the
  horizontal axis corresponds to $x_2$ and the vertical axis to $x_3$;
  the left image is centered at $x_1 = 1$ and the right image at
  $x_1=-1$. Since $m$ is even, the function is symmetric, i.e.,
  $f(\Vx) = f(-\Vx)$.
  The eigenvectors are shown as white, gray, and black circles
  corresponding to their classification as negative stable, positive
  stable, and unstable, respectively;
  in turn, these correspond to maxima, minima, and saddle
  points of  $f(\Vx)$. 

  \Fig{sphere_convex} shows the basins of attraction for
  SS-HOPM with $\alpha = 2$. 
  Every grid point on the sphere was used as a starting point for
  SS-HOPM, and it is colored\footnote{Specifically, each block on the
  sphere is colored according to the convergence of its lower left
  point.} according to which eigenvalue it converged
  to. 
  In this case, every run converges to a
  negative stable eigenpair (labeled
  with a white circle). Recall that SS-HOPM
  must converge to some eigenpair per \Thm{main}, and \Thm{fp} says
  that it is generically a negative stable eigenpair. 
  Thus, the
  non-attracting points lie on the boundaries of the domains of
  attraction.

  \Fig{sphere_concave} shows the basins of attraction for SS-HOPM with
  $\alpha = -2$. In this case, every starting point converges to an
  eigenpair that is positive stable (shown
  as gray circles).
\end{example}

\begin{figure}[htbp]
  \centering
  %TRIM LEFT BOTTOM RIGHT TOP
  \subfloat[$\TA\in\RT{4}{3}$ from \Ex{KoRe02_ex1}.]{
    \label{fig:jac_KoRe02_ex1}
    \includegraphics[width=2.4in,trim=0 0 0 0,clip]{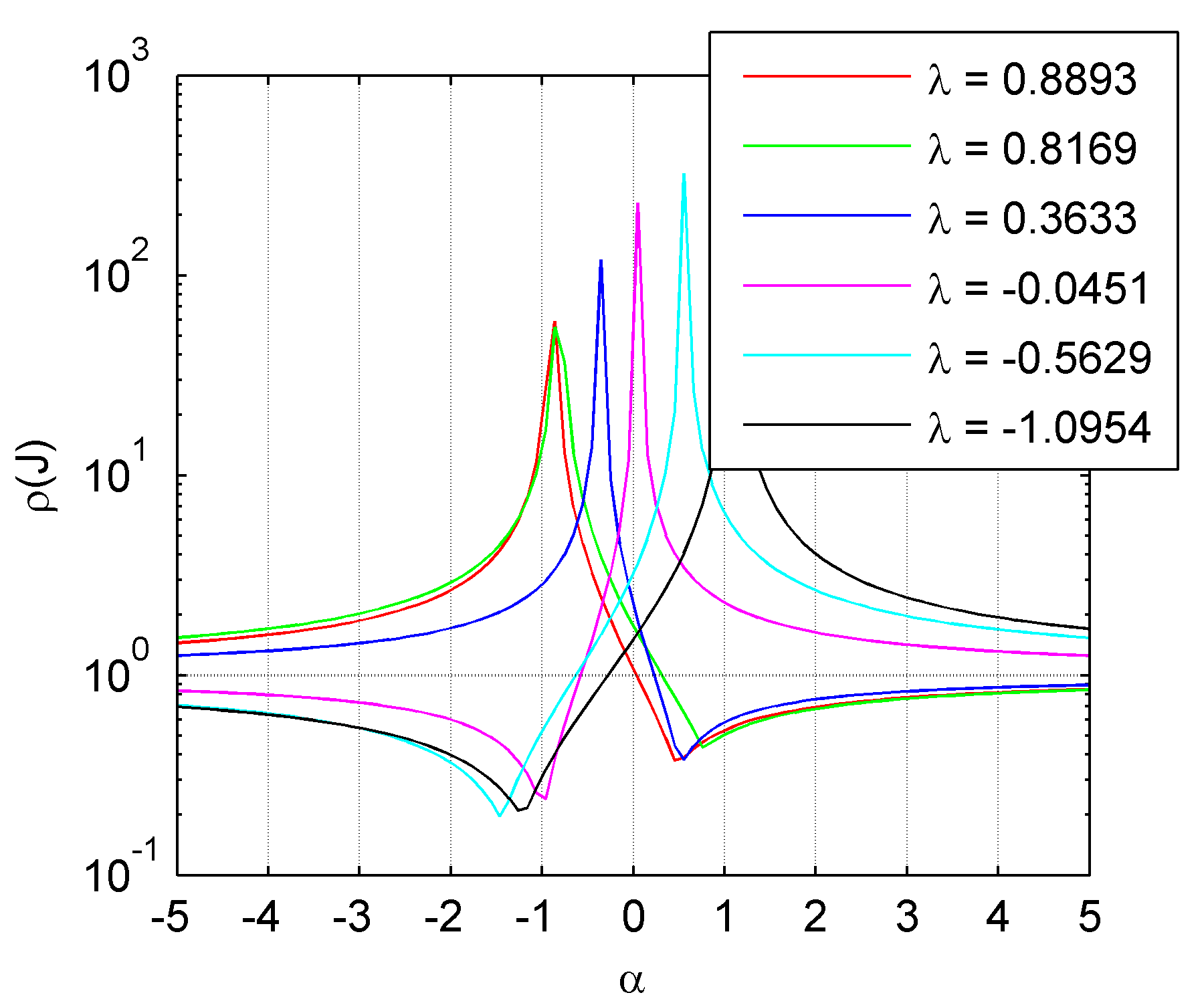}
  }
  \subfloat[$\TA\in\RT{3}{3}$ from \Ex{odd}.]{
    \label{fig:jac_odd}
    \includegraphics[width=2.4in,trim=0 0 0 0,clip]{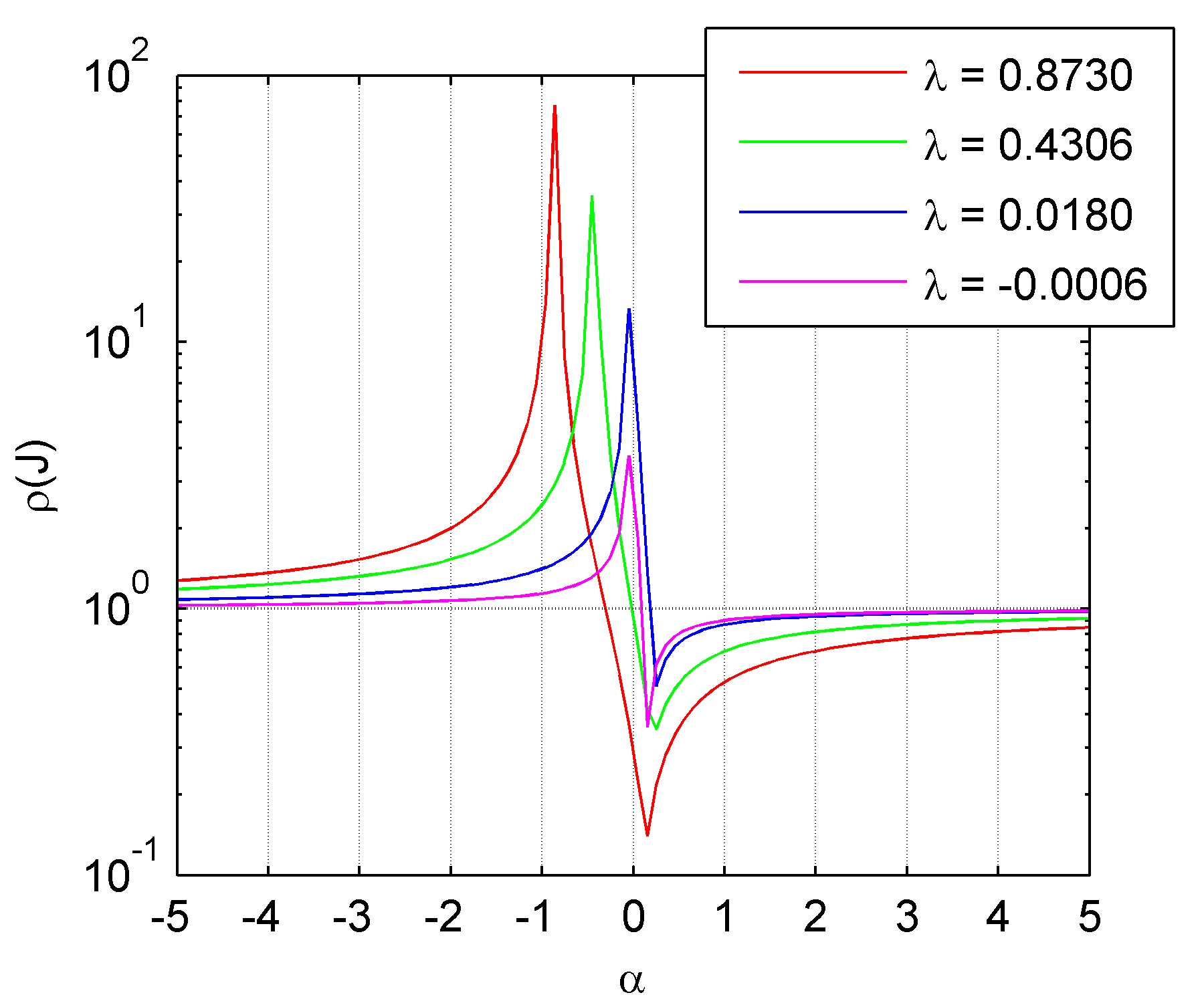}
  }
  \caption{Spectral radii of the Jacobian $J(\Vx;\alpha)$ for
  different eigenpairs as $\alpha$ varies.}
\end{figure}

\begin{figure}[htbp]
  \centering
  \subfloat[$\TA\in\RT{4}{3}$ from \Ex{KoRe02_ex1} using $\alpha = \pm 2$, as appropriate.]{
    \label{fig:rate_KoRe02_ex1}
    \includegraphics[width=2.4in,trim=0 0 0 5,clip]{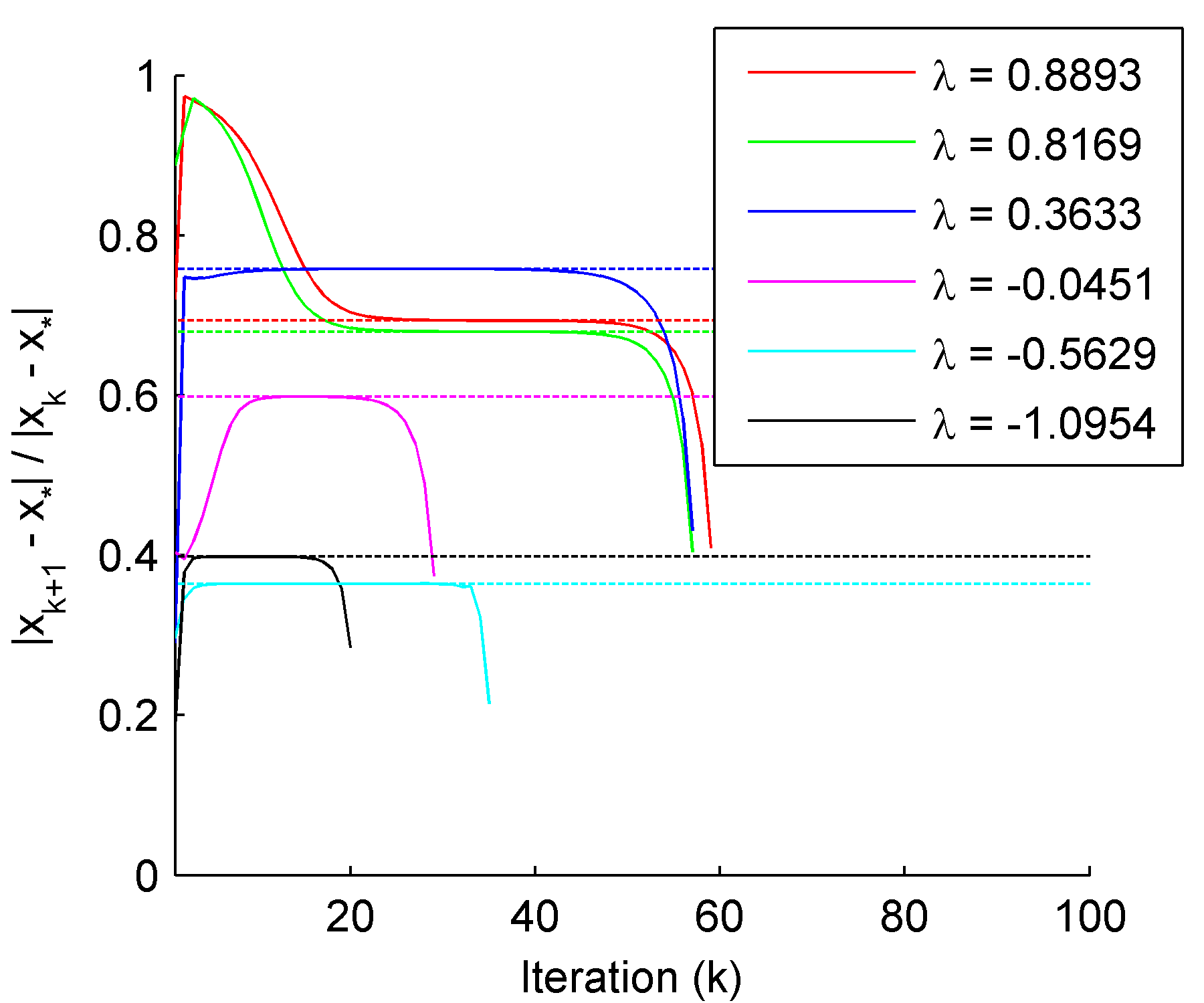}
  }
  \subfloat[$\TA\in\RT{3}{3}$ from \Ex{odd} using $\alpha = 1$.]{
    \label{fig:rate_odd}
    \includegraphics[width=2.4in,trim=0 0 0 5,clip]{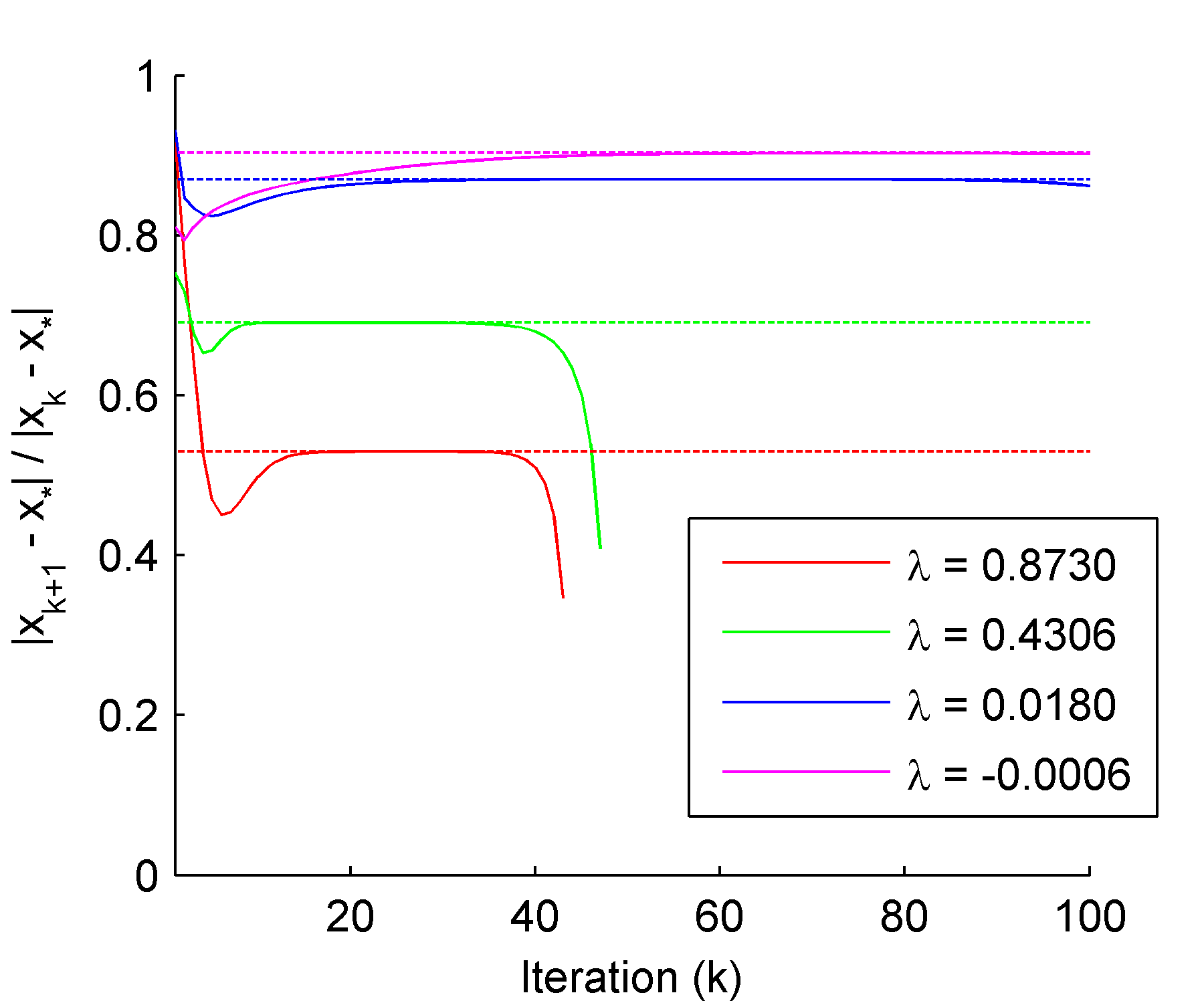}
  }
  \caption{Example plots of $\|\Vx_{k+1}-\Vx_*\|/\|\Vx_k - \Vx_*\|$. 
    The expected rate of convergence from $J(\Vx_*;\alpha)$ is shown
    as a dashed line.} 
  \label{fig:rate}
\end{figure}

\begin{figure}[htbp]
  \centering
  %TRIM LEFT BOTTOM RIGHT TOP
  \subfloat[Function values for $f(\Vx)=\TA\Vx^m$.]{
    \label{fig:sphere_f}
    \includegraphics[trim=35 0 35 25,clip,height=0.27\textheight]{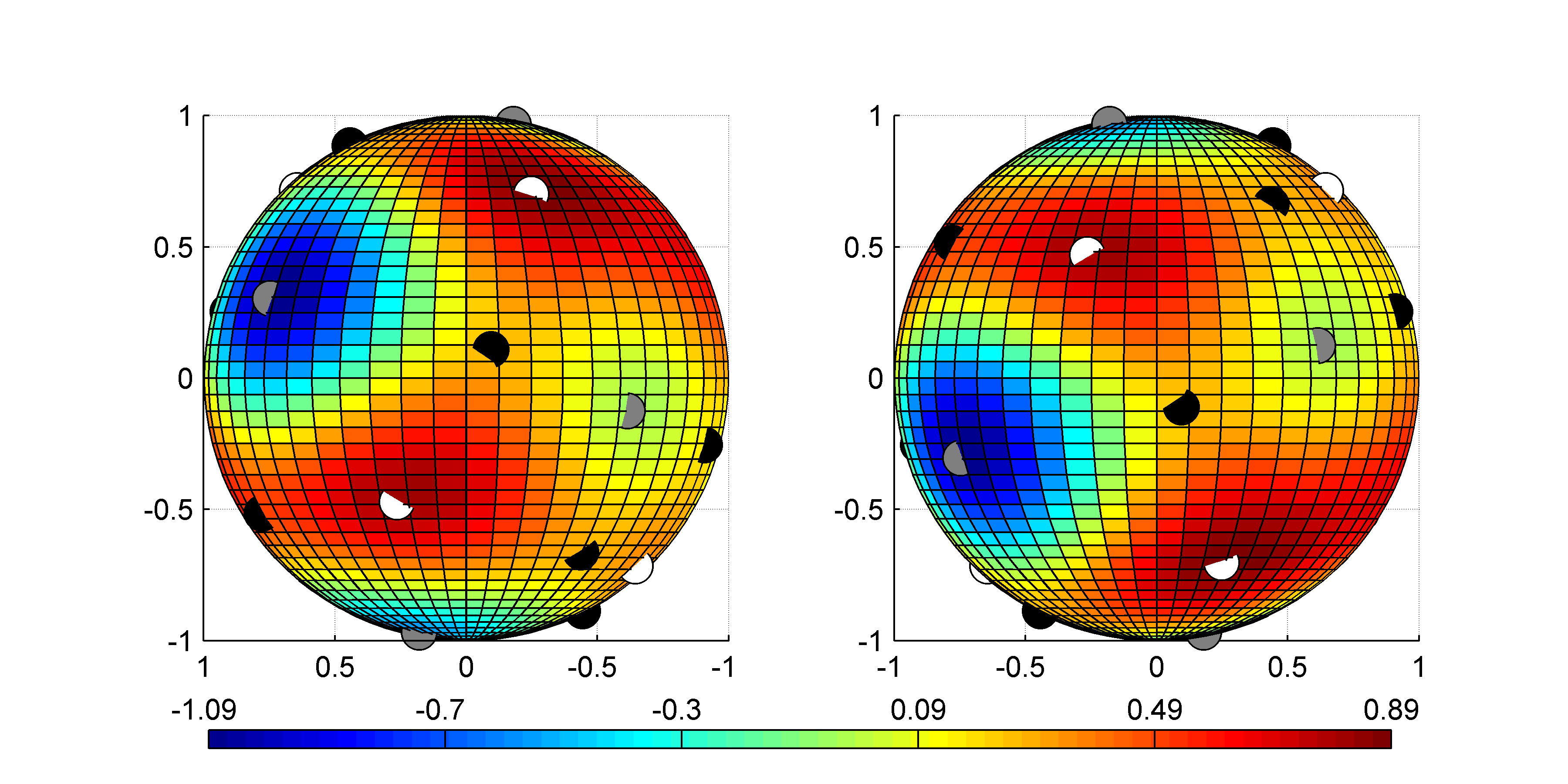}
  }

  \subfloat[SS-HOPM basins of attraction using $\alpha = 2$.]
  {
    \label{fig:sphere_convex}
    \includegraphics[trim=35 0 35 25,clip,height=0.27\textheight]{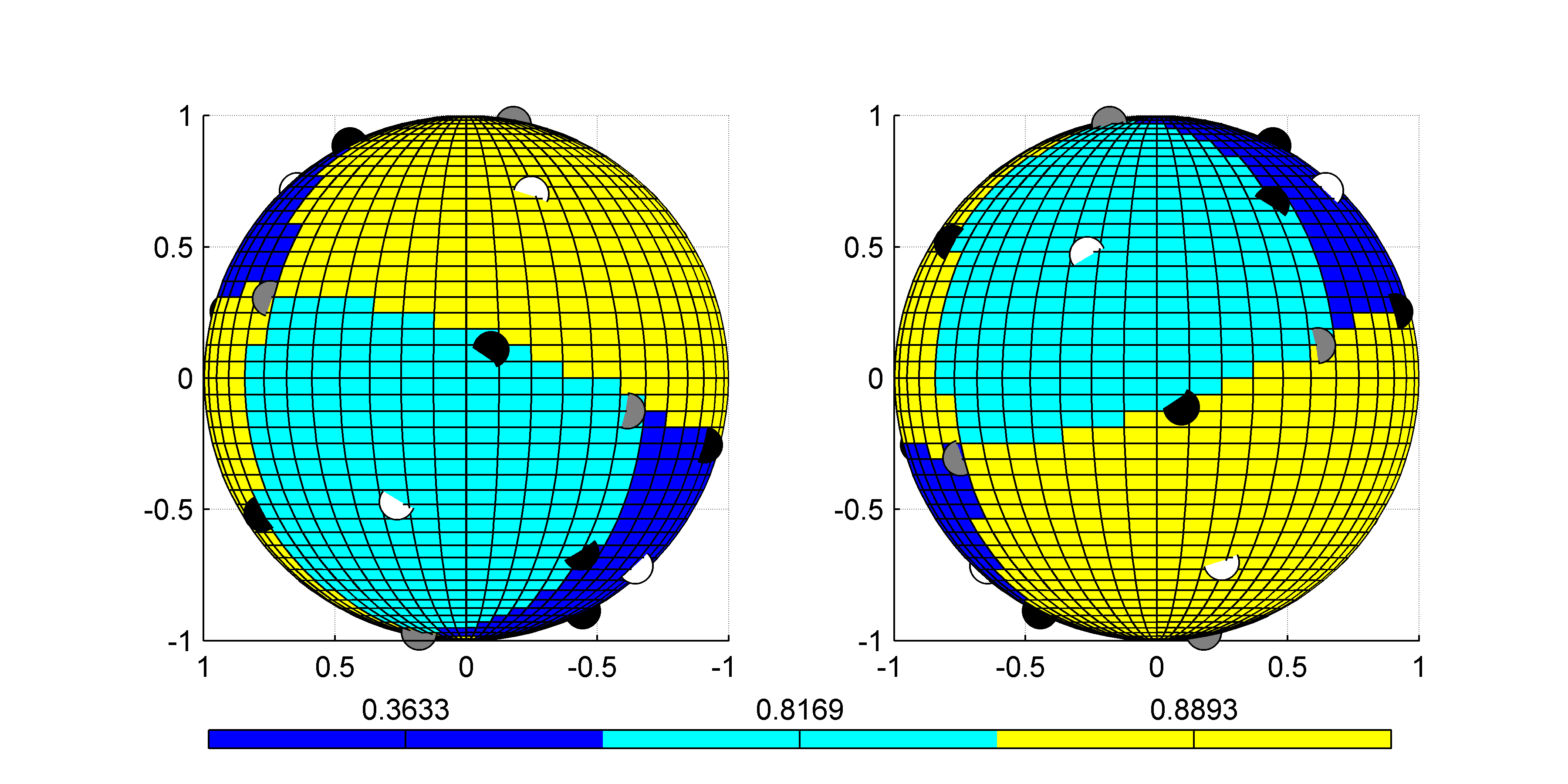}
  }

  \subfloat[SS-HOPM basins of attraction using $\alpha = -2$.]
  {
    \label{fig:sphere_concave}
    \includegraphics[trim=35 0 35 25,clip,height=0.27\textheight]{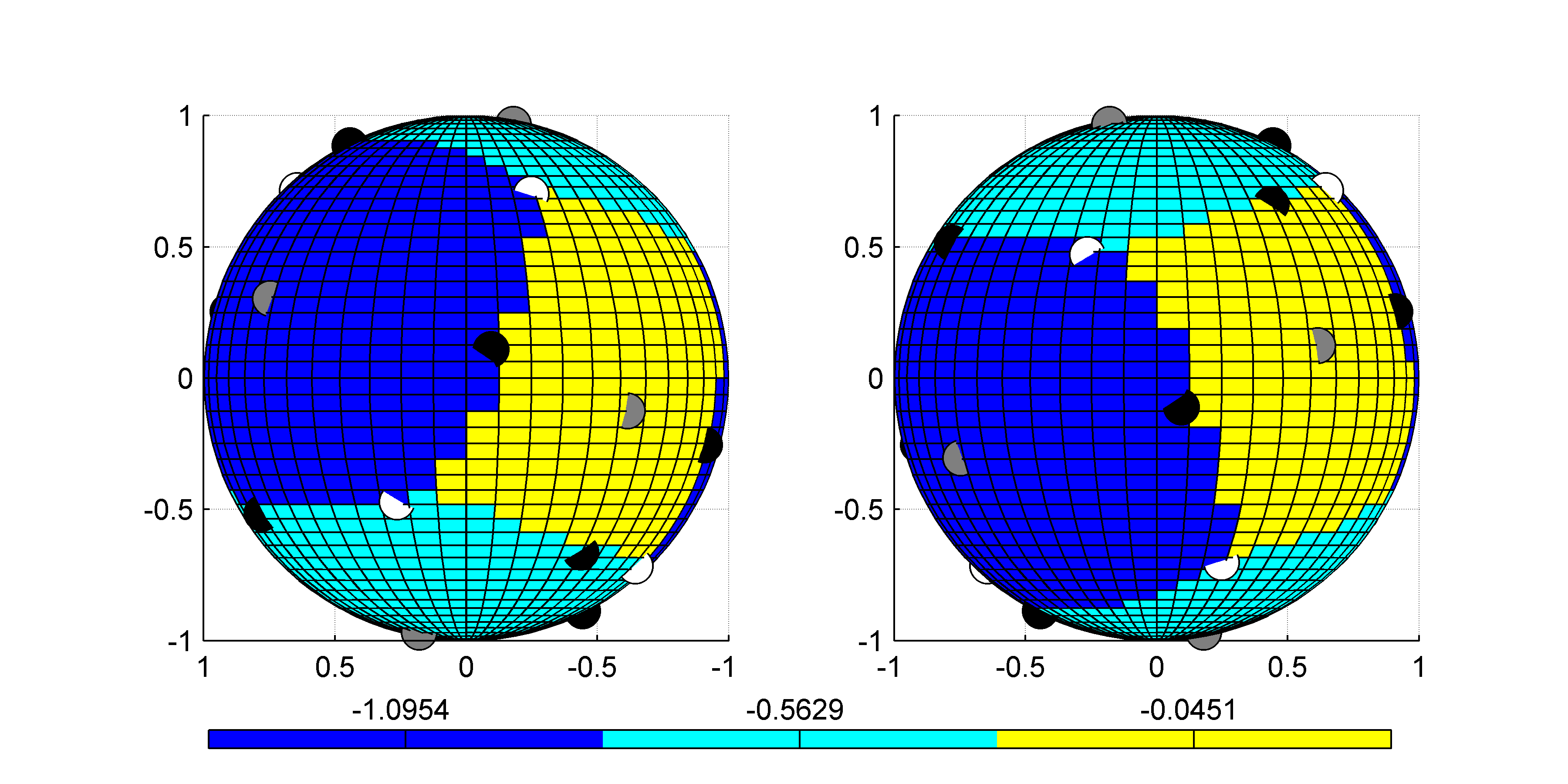}
  }

  \caption{Illustrations for $\TA \in \RT{4}{3}$ from
    \Ex{KoRe02_ex1}. The horizontal axis corresponds 
    to $\VE{x}{2}$ and the vertical axis to $\VE{x}{3}$; the left
    image is centered at $\VE{x}{1}=1$ and the right at
    $\VE{x}{1}=-1$.  White, gray, and black dots indicate eigenvectors
    that are negative stable, positive stable, and unstable,
    respectively.}
\end{figure}

\begin{figure}[htbp]
  \centering
  %TRIM LEFT BOTTOM RIGHT TOP
  \subfloat[Function values for $f(\Vx)=\TA\Vx^m$.]{
    \label{fig:sphere_odd_f}
    \includegraphics[trim=35 0 35 25,clip,height=0.27\textheight]{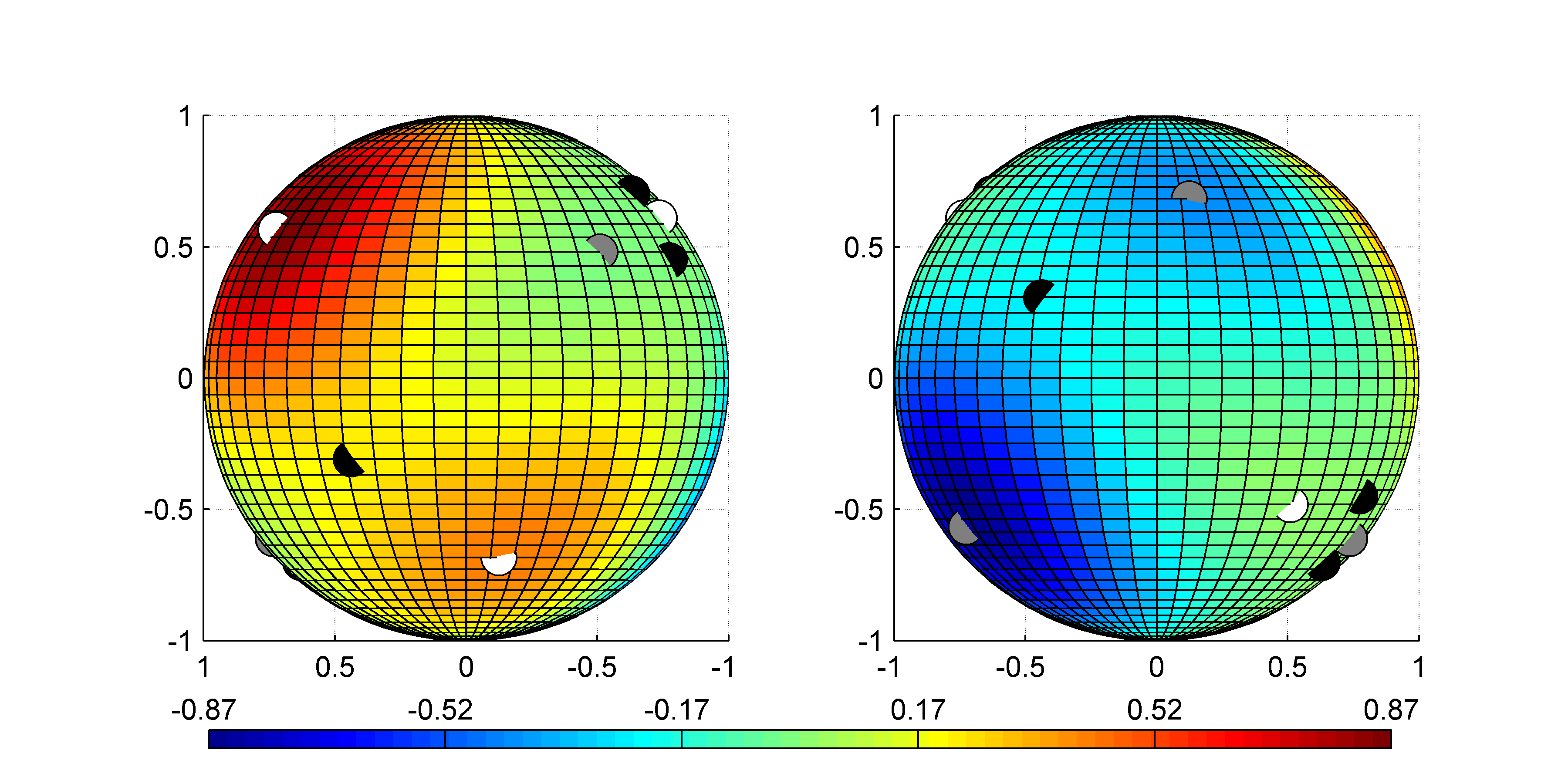}
  }

  \subfloat[SS-HOPM basins of attraction using $\alpha = 0$.]
  {
    \label{fig:sphere_odd_zero}
    \includegraphics[trim=35 0 35 25,clip,height=0.27\textheight]{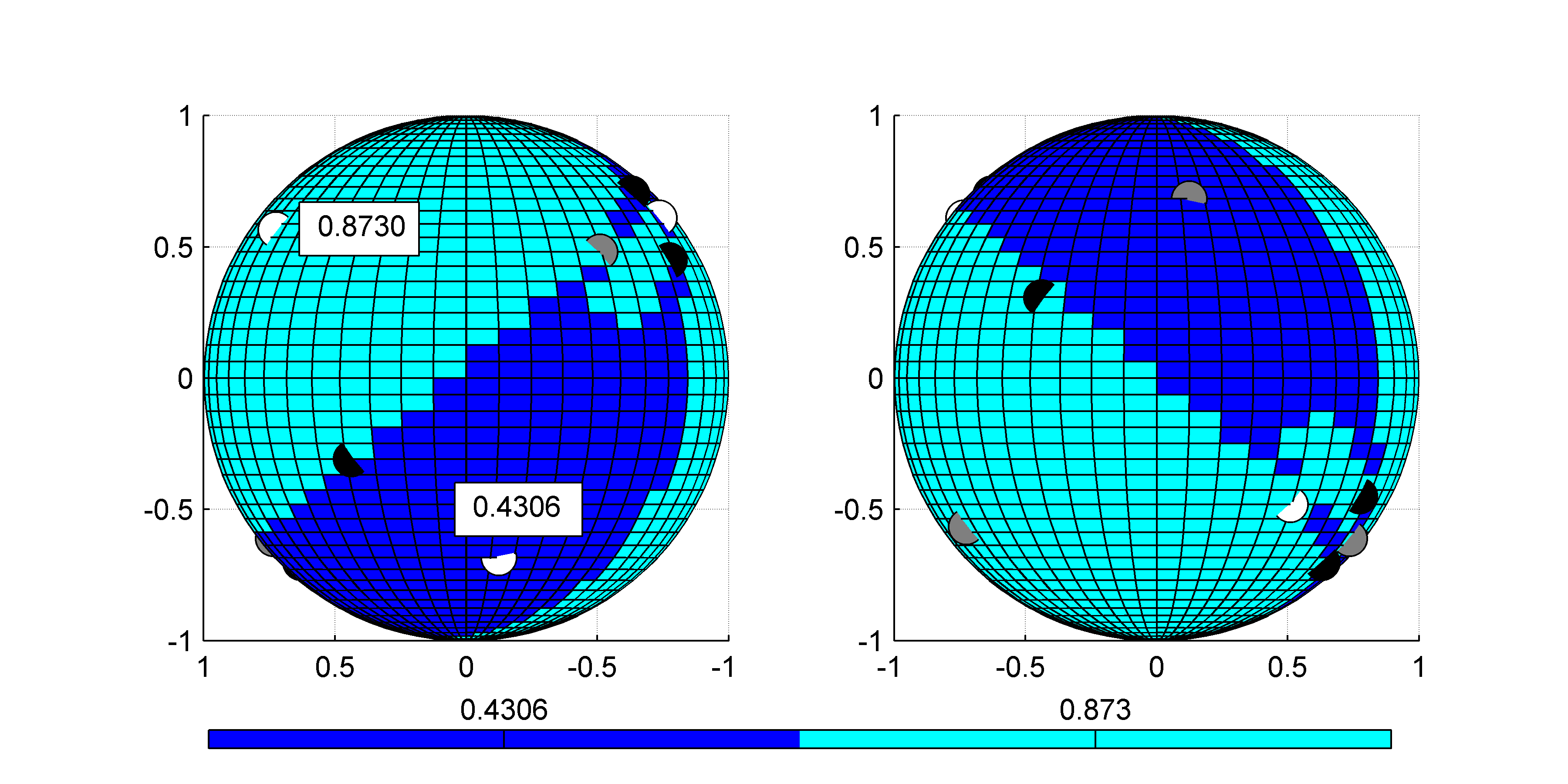}
  }

  \subfloat[SS-HOPM basins of attraction using $\alpha = 1$.]
  {
    \label{fig:sphere_odd_convex}
    \includegraphics[trim=35 0 35 25,clip,height=0.27\textheight]{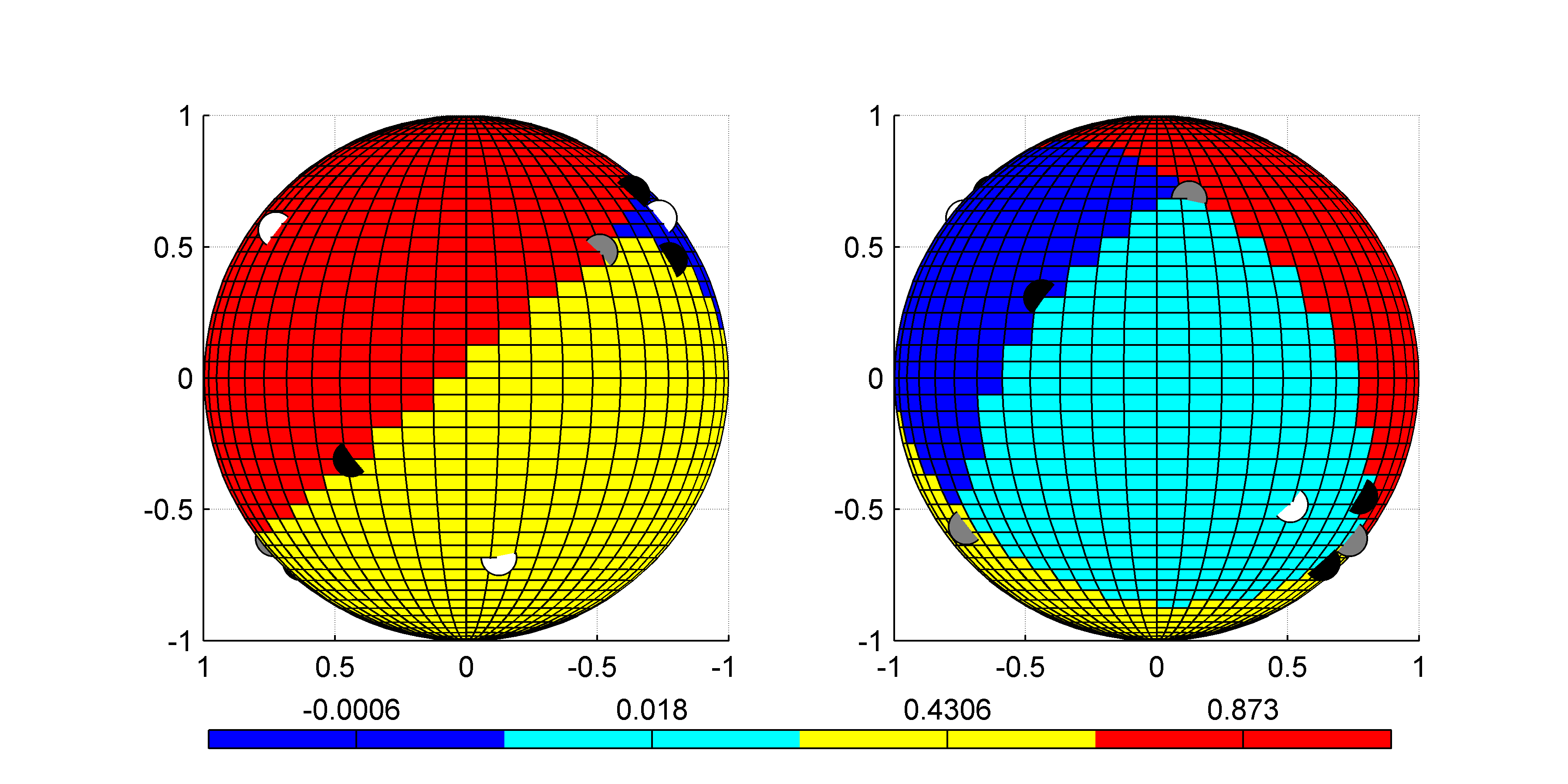}
  }

  \caption{Illustrations for $\TA \in \RT{3}{3}$ from \Ex{odd}. 
    The horizontal axis corresponds 
    to $\VE{x}{2}$ and the vertical axis to $\VE{x}{3}$; the left
    image is centered at $\VE{x}{1}=1$ and the right at
    $\VE{x}{1}=-1$.  White, gray, and black dots indicate eigenvectors
    that are negative stable, positive stable, and unstable,
    respectively. }
\end{figure}

\begin{example}
  We return again to $\TA \in \RT{3}{3}$ from \Ex{odd}, which is
  interesting because S-HOPM was able to find 2 of its eigenpairs
  without any shift.
  In \Fig{sphere_odd_f},
  $f(\Vx)$ is plotted on the unit sphere, along with each
  eigenvector, colored white, gray, or black based on whether it is
  negative stable, positive stable, or unstable, respectively. Observe that
  the function is antisymmetric, i.e., $f(\Vx) = -f(-\Vx)$.
  \Fig{sphere_odd_zero} shows the basins of attraction for S-HOPM
  (i.e., SS-HOPM with $\alpha = 0$). Every starting point converges to
  one of the 2 labeled eigenpairs. 
  This is not surprising because \Fig{jac_odd} shows that there are
  2 eigenvalues for which the spectral radius of the Jacobian is
  less than 1 ($\lambda = 0.8730$ and $0.4306$). The other 2
  eigenvalues are non-attracting for $\alpha=0$. \Fig{rate_odd} shows
  the observed rates of convergence.  
  
  \Fig{sphere_odd_convex} shows the basins of attraction for SS-HOPM
  with $\alpha = 1$; each negative stable eigenpair 
  (shown as a white circle) is an attracting
  eigenpair.  
  % From \Fig{jac_odd}, it may be surprising that we find 4 rather
  % than 3 eigenvalues because it appears that there are only 3
  % eigenvalues with a spectral radius less then 1. However, since $m$
  % is odd, we must also consider that we will pick up the opposite of
  % any eigenvalues for which the spectral radius is less than 1 for
  % $-\alpha$. In this case, it means that we find $\lambda = -0.0006$.
  The concave case is just a mirror image and is not
  shown. 
\end{example}

As the previous example reminds us, for odd order, there is no need to
try both positive and negative $\alpha$ because the definiteness of
$C$ flips for eigenvectors of opposite sign.

Two additional examples of SS-HOPM are presented in \App{more}.

\subsection{Relationship to power method for matrix eigenpairs} 

The power method for matrix eigenpairs
is a technique for finding the largest-magnitude
eigenvalue (and corresponding eigenvector) of a diagonalizable symmetric matrix
\cite{GoVa96}.
Let $\M{A}$ be a symmetric real-valued $n \times n$ matrix. Then the matrix power
method is defined by
\begin{displaymath}
  \Vx_{k+1} = \frac{\M{A}\Vx_k}{\|\M{A}{\Vx_k}\|}.
\end{displaymath}
Assume that $\M{V} \M{\Lambda} \M{V}\Tra$ is the Schur
decomposition of $\M{A}$ with
eigenvalues satisfying $|\lambda_1| >
|\lambda_2| \geq \cdots \geq |\lambda_n|$ (note the strict difference
in the first 2 eigenvalues). The sequence $\{\Vx_k\}$ produced by
the matrix power method always converges (up to sign) to the
eigenvector associated with $\lambda_1$. Shifting the matrix by
$\M{A} \leftarrow \M{A} + \alpha \M{I}$ shifts the eigenvalues by
$\lambda_j \leftarrow \lambda_j + \alpha$, potentially altering which eigenvalue
has the largest magnitude.

In the matrix case, the eigenvalues of the Jacobian defined by \Eqn{J}
for an eigenpair
$(\lambda_j,\Vx_j)$ are given by
\begin{displaymath}
  \{0\} \cup \left\{ \frac{\lambda_i + \alpha}{\lambda_j + \alpha} : 1 \le i \le n \text{ with } i \ne j \right\}.
\end{displaymath}
Thus, the Jacobian at $\Vx_1$ is the only one such that $\rho(J(\Vx;\alpha))
< 1$; no other eigenvectors are stable fixed points of the iteration.
This corresponds to \Thm{fp} (or \Cor{fp}), since the most positive eigenvalue is negative stable,
the most negative eigenvalue is positive stable, and every other eigenvalue is unstable. The eigenpair
$(\lambda_1,\Vx_1)$ is an attractor for ordinary (convex) power iteration if $\lambda_1 > 0$ or for
flipped (concave) power iteration if $\lambda_1 < 0$.

In contrast to the matrix power method, SS-HOPM can find multiple
eigenpairs since there may be multiple positive and negative stable
eigenpairs. But, as for matrices, since the most positive and most negative
eigenvalues correspond to the global maximum
and minimum of $f(\Vx)$, they must be negative stable and positive stable
respectively. Thus, choosing $\alpha$
positive is necessary for finding the most positive tensor eigenvalue;
conversely, $\alpha$ negative is necessary for finding the most negative
tensor eigenvalue. Unfortunately, the ability to find multiple
eigenpairs means that there is no guarantee that the iterates will
converge to an extremal eigenpair from every starting
point. In fact, multiple starting points may be needed.

\subsection{Comparison to other methods}
SS-HOPM is useful for its guaranteed convergence properties and its
simple implementation based on tensor-vector multiplication.
For fixed $m$ and large $n$, the computational complexity of each iteration of
SS-HOPM is $O(n^m)$, which is the number of individual terms to be computed in $\TA\Vx^{m-1}$.
This is analogous to the $O(n^2)$ complexity of matrix-vector multiplication as used
in the matrix power method.
We do not yet know how the number of iterations needed for numerical convergence of SS-HOPM
depends on $m$ and $n$.

The convergence of SS-HOPM to only a subset of eigenvalues, which tend to be among
the largest in magnitude, is beneficial when the large eigenvalues are
of primary interest, as in the rank-1 approximation problem \cite{KoRe02}.
In particular, the most positive eigenvalue and most negative eigenvalue
always have a region of stable convergence for a suitable choice of shift.
However, the lack of stable convergence to certain other eigenvalues
is a disadvantage if those eigenvalues are of interest.

One evident computational approach for finding tensor eigenpairs should be compared with
SS-HOPM\@. This is to apply a numerical solver for nonlinear equation systems, such as Newton's
method, directly to the eigenvalue equations \Eqn{EVP}. The computational complexity
of each iteration of Newton's method for this system is that of SS-HOPM plus the construction
and inversion of the $(n + 1) \times (n + 1)$ Jacobian for $(\lambda, \Vx)$. The Jacobian construction
is effectively included in SS-HOPM, since it is dominated by computing $\TA\Vx^{m-2}$, which is
a precursor of $\TA\Vx^{m-1}$. The additional work for inversion is
$O(n^3)$, and for $m \ge 3$ it does not affect the complexity scaling, which remains
$O(n^m)$.

Two advantages of an approach such as Newton's method are generic locally stable convergence,
which enables finding eigenpairs not found by SS-HOPM, and the quadratic order of
convergence, which can be expected to require fewer iterations than the linearly
convergent SS-HOPM\@. On the other hand, there is no known guarantee of global
convergence as there is for SS-HOPM, and it is possible that many starting
points fail to converge. Even those that do converge may lead to eigenpairs of less
interest for a particular application. Furthermore, certain tensor structures can be more efficiently
handled with SS-HOPM than with Newton's method. For example, consider a higher-order symmetric
tensor expressed as a sum of terms, each of which is an outer product of matrices. The
computation of $\TA\Vx^{m-1}$ then reduces to a series of matrix-vector multiplications, which
are $O(n^2)$. This compares favorably to the $O(n^3)$ of Newton's method for the same tensor.
Further investigation of general nonlinear solver approaches to the tensor eigenvalue problem will
be beneficial.

Finally, we consider a polynomial solver approach, such as we implemented in Mathematica.
This can find all eigenpairs (subject to numerical conditioning issues) but becomes computationally
expensive for large $m$ and $n$. In part this is simply because, from \Thm{neigs}, the number of
eigenpairs grows exponentially with $n$. The solver in Mathematica is designed to find all solutions;
it is not clear whether a substantial improvement in efficiency would be possible if only one or a few
solutions were required.

Nevertheless, for comparison with the iterative approaches discussed above,
we have measured the computational time per eigenpair on a desktop computer for various values
of $m$ and $n$, as shown in \Fig{mathematica-timing}.
The complexity of the polynomial solution, even measured per eigenpair, is seen to increase extremely rapidly (faster than
exponentially) with $n$. Thus the polynomial solver approach is not expected to be practical for large $n$.

\begin{figure}[htbp]
  \centering
  \includegraphics[width=3in,trim=0 0 0 0,clip]{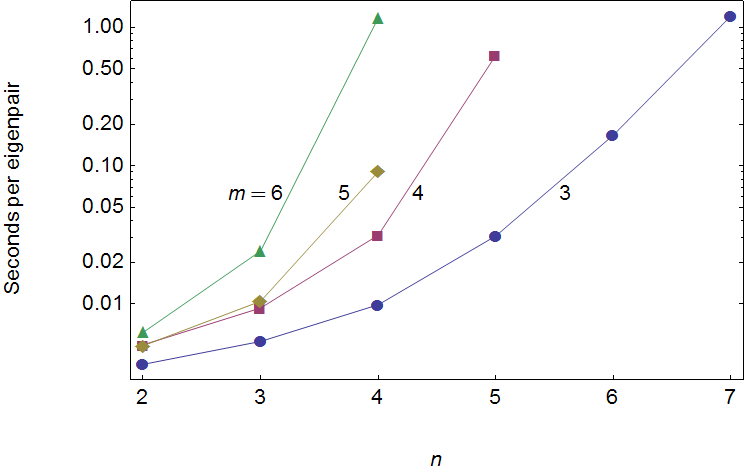}
  \caption{Average time (over 10 trials) required to compute all eigenpairs, divided by the number of eigenpairs,
  for random symmetric tensors in $\RT{m}{n}$. Note logarithmic vertical scale. Measured using \texttt{NSolve} in
  Mathematica on a 4 GHz Intel Core i7.}
  \label{fig:mathematica-timing}
\end{figure}

%% ----------------------------------------------------------------------
%% Complex
%% ----------------------------------------------------------------------
\section{Complex case}
\label{sec:complex}

We present the more general definition of complex eigenpairs and some related results, and propose an extension of the SS-HOPM algorithm to this case.

\subsection{Eigenrings}

Some of the solutions of the polynomial system that results from the eigenvalue equations may be complex; thus, the  definition can be extended to the complex case as follows, where $\Cct$ denotes the conjugate transpose.

\begin{definition}
  \label{def:EVP-COMPLEX}
  Assume that $\TA$ is a symmetric $m^\text{th}$-order $n$-dimensional
  real-valued tensor. 
  Then $\lambda \in \Complex$ is an
  \emph{eigenvalue} of $\TA$ if there exists $\Vx \in \Complex^n$ such that
  \begin{equation}\label{eq:EVP-COMPLEX}
    \TA\Vx^{m-1} = \lambda \Vx  \qtext{and} \Vx\Cct\Vx=1.
  \end{equation}%
  The vector $\Vx$ is a corresponding \emph{eigenvector}, and
  $(\lambda,\Vx)$ is called an \emph{eigenpair}. 
\end{definition}

\Def{EVP-COMPLEX} is closely related to the E-eigenpairs defined by Qi
\cite{Qi05,Qi07} but differs in the constraint on $\Vx$.\footnote{Qi
  \cite{Qi05,Qi07} requires $\Vx\Tra\Vx=1$ rather than
  $\Vx\Cct\Vx=1$.} It can also be considered as the obvious extension
of ($l^2$-)eigenpairs to $\Complex$.

It has been observed \cite{Qi07,CaSt10} that the complex eigenpairs of a tensor
form equivalence classes under a multiplicative transformation.
Specifically, if
$(\lambda,\Vx)$ is an eigenpair of $\TA \in \RT{m}{n}$ and $\Vy = e^{i\varphi}\Vx$
with $\varphi \in \Real$, 
then $\Vy\Cct\Vy = \Vx\Cct\Vx = 1$ and
\begin{displaymath}
  \TA\Vy^{m-1} 
  = e^{i(m-1)\varphi} \TA\Vx^{m-1}
  = e^{i(m-1)\varphi} \lambda \Vx
  = e^{i(m-2)\varphi} \lambda \Vy.
\end{displaymath}
Therefore $(e^{i(m-2)\varphi}\lambda,e^{i\varphi}\Vx)$ is also an
eigenpair of $\TA$ for any $\varphi \in \Real$. 
Consequently, if $\lambda$
is an eigenvalue, then any other $\lambda' \in \Complex$ with
$|\lambda'| = |\lambda|$ is also an 
eigenvalue. This leads to the notion of an eigenring.

\begin{definition}[Eigenring]
  For any $(\lambda,\Vx) \in \Complex \times \Complex^n$ that is an
  eigenpair of $\TA \in \RT{m}{n}$, we define a corresponding 
  equivalence class of (vector-normalized)  eigenpairs
  \begin{displaymath}
    \mathcal{P}(\lambda,\Vx) = 
    \{ (\lambda',\Vx') : 
    \lambda' = e^{i(m-2)\varphi} \lambda,
    \Vx' = e^{i\varphi} \Vx,
    \varphi \in \Real
    \},
  \end{displaymath}
  as well as a corresponding \emph{eigenring}
  \begin{displaymath}
      \mathcal{R}(\lambda) = \{ \lambda' \in \Complex : |\lambda'| =
      |\lambda| \}.
  \end{displaymath}
\end{definition}%
Thus, any eigenring contains either 1 or 2 real eigenvalues. The special case of real eigenpairs occurs whenever the corresponding eigenvector for one of those real eigenvalues is also real.

Since we assume that $\TA$ is real-valued, any nonreal eigenpairs
must come in sets of 2 related by complex conjugation, because taking the conjugate of
the eigenvalue equation does not change it. Such conjugate eigenpairs
are not members of the same equivalence class unless they are
equivalent to a real eigenpair.

An elegant result has recently been derived for the number of distinct
(non-equivalent) eigenvalues of a symmetric tensor.
The result was first derived for even-order tensors by Ni et al.\@ \cite{NiQiWaWa07} and later generalized by Cartwright and Sturmfels \cite{CaSt10} for all cases.
The case of $m=2$ requires
application of l'H\^opital's rule to see that there are $n$ eigenvalues.

\begin{theorem}[{Cartwright and Sturmfels \cite{CaSt10}}]
  \label{thm:neigs}
  A generic symmetric tensor $\TA\in\RT{m}{n}$ has
  $((m-1)^n-1)/(m-2)$ distinct eigenvalue equivalence classes.
\end{theorem}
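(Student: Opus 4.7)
The plan is to reinterpret the eigenvalue equation as a fixed-point equation for a self-map of projective space and count fixed points by classical intersection theory. On nonzero vectors, $\Vx \mapsto \TA\Vx^{m-1}$ is homogeneous of degree $m-1$ in $\Vx$, so it descends to a rational self-map $\phi_{\TA} : \mathbb{P}^{n-1} \to \mathbb{P}^{n-1}$, $[\Vx] \mapsto [\TA\Vx^{m-1}]$. The condition $\TA\Vx^{m-1} = \lambda \Vx$ with $\Vx \ne \V{0}$ is precisely the statement that $[\Vx]$ is a fixed point of $\phi_{\TA}$. Moreover, each equivalence class $\mathcal{P}(\lambda,\Vx)$ of unit-norm eigenpairs corresponds to exactly one point of $\mathbb{P}^{n-1}$: given $[\Vx]$, the normalization $\Vx\Cct\Vx = 1$ pins $\Vx$ down up to an overall phase $e^{i\varphi}$, and $\lambda$ is then determined up to the matching $e^{i(m-2)\varphi}$ ambiguity built into $\mathcal{P}$. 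Thus the theorem reduces to counting fixed points of $\phi_{\TA}$ on $\mathbb{P}^{n-1}$ for generic $\TA$.

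I would next count these fixed points cohomologically. Generically the base locus $\{\Vx : \TA\Vx^{m-1} = \V{0}\}$ is empty (this is an open condition in tensor parameter space and a single explicit witness suffices), so $\phi_{\TA}$ is a genuine morphism of degree $m-1$; thus $\phi_{\TA}^* h = (m-1)h$, where $h$ is the hyperplane class on $\mathbb{P}^{n-1}$. The graph $\Gamma_{\phi_{\TA}} \subset \mathbb{P}^{n-1} \times \mathbb{P}^{n-1}$ then has cohomology class $\sum_{k=0}^{n-1} (m-1)^k\, (p_1^* h)^k (p_2^* h)^{n-1-k}$, while the diagonal $\Delta$ has the standard K\"unneth class $\sum_{k=0}^{n-1} (p_1^* h)^k (p_2^* h)^{n-1-k}$. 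Their intersection number, which counts fixed points with multiplicity, is
\begin{equation*}
  [\Gamma_{\phi_{\TA}}] \cdot [\Delta] \;=\; \sum_{k=0}^{n-1} (m-1)^k \;=\; \frac{(m-1)^n - 1}{m-2}.
\end{equation*}

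The hard part is justifying transversality: for generic $\TA$, every fixed point of $\phi_{\TA}$ must be simple, so the cohomological number above coincides with the count of distinct fixed points rather than overcounting because of multiplicities. The cleanest route is to exhibit one transverse witness and invoke upper semicontinuity of intersection multiplicity to propagate the conclusion to a Zariski open subset of tensors. A convenient witness is the diagonal tensor with generic distinct entries $d_1,\dots,d_n$, for which $(\TA\Vx^{m-1})_i = d_i\, \VE{x}{i}^{m-1}$; eigenvectors are then enumerated by a nonempty support $S \subseteq \{1,\dots,n\}$ together with a choice of $(m-2)$-th roots for the nonzero coordinates, producing exactly
\begin{equation*}
  \sum_{k=1}^{n} \binom{n}{k}(m-2)^{k-1} \;=\; \frac{(m-1)^n - 1}{m - 2}
\end{equation*}
distinct projective fixed points, and a direct chart-by-chart computation shows the derivative of $\phi_{\TA}$ at each one has no eigenvalue equal to $1$. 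Matching the explicit count against the cohomological upper bound pins the generic number down to $((m-1)^n - 1)/(m-2)$.
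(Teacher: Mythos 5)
The paper itself does not prove this statement: \Thm{neigs} is imported by citation from Cartwright and Sturmfels \cite{CaSt10} (with the even-order case due to Ni et al.\ \cite{NiQiWaWa07}), so there is no internal proof to compare against. Your argument is a sound reconstruction of the cited proof's structure, phrased as a graph-meets-diagonal (Lefschetz-style) count in $\mathbb{P}^{n-1}\times\mathbb{P}^{n-1}$ rather than as the equivalent Chern-class computation in \cite{CaSt10}, where the eigenpoints are cut out by a section of a rank-$(n-1)$ bundle on $\mathbb{P}^{n-1}$; both bookkeepings give $\sum_{k=0}^{n-1}(m-1)^k = ((m-1)^n-1)/(m-2)$, and your verification of the graph class and of the diagonal witness (including the count $\sum_k\binom{n}{k}(m-2)^{k-1}$ and the chart Jacobian being diagonal with entries $m-1$ on the support and $0$ off it, hence never $1$ for $m\ge 3$) is correct. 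Your identification of eigenpair equivalence classes with projective fixed points is clean precisely under this paper's normalization $\Vx\Cct\Vx=1$, since every point of $\mathbb{P}^{n-1}(\Complex)$ carries exactly one phase-orbit of unit representatives; this sidesteps the isotropic-vector caveat that arises for the normalization $\Vx\Tra\Vx=1$ used in \cite{CaSt10} and discussed in the paper after the theorem. Two points would need one more sentence each in a full write-up: the fixed locus must be known to be finite before the intersection number decomposes into local multiplicities (this follows from the witness together with semicontinuity of fiber dimension for the universal incidence variety over tensor space, and is what licenses your ``upper semicontinuity'' step), and the theorem's phrase ``eigenvalue equivalence classes'' must be read as eigen\emph{pair} classes --- the paper itself counts the two conjugate eigenvectors sharing $|\lambda|=0.6694$ in \Ex{KoRe02_ex1} as two of the thirteen --- which is exactly what your projective fixed points enumerate. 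Neither is a gap in substance; the proposal is essentially the standard proof of the cited result.
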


These papers \cite{NiQiWaWa07,CaSt10} use the condition $\Vx\Tra\Vx = 1$ to
normalize eigenpairs, but in the generic case the result is the same for our
condition $\Vx\Cct\Vx = 1$.
This is because the eigenpairs with $\Vx\Cct\Vx = 1$ generically consist
of isolated equivalence classes that have
$\Vx\Tra\Vx \ne 0$ and thus can be rescaled to satisfy $\Vx\Tra\Vx = 1$,
giving a one-to-one correspondence between the distinct eigenvalues
in the two normalizations.
In special cases, however, the condition $\Vx\Cct\Vx = 1$ admits additional
eigenpairs with $\Vx\Tra\Vx = 0$.
Furthermore, tensors can be constructed with a continuous family of
non-equivalent eigenvectors that correspond to the same eigenvalue when
normalized by $\Vx\Tra\Vx$ but to infinitely many distinct eigenvalues
when normalized by $\Vx\Cct\Vx$ \cite[Example 5.7]{CaSt10}.

The polynomial system solver using Gr\"obner bases mentioned earlier
can also be used to find complex solutions.
A complication is that
our normalization condition $\Vx\Cct\Vx = 1$ is nonpolynomial due to the complex conjugation.
The system, however, becomes polynomial if the alternate normalization condition
$\Vx\Tra\Vx = 1$ is temporarily adopted.
Any such $\Vx$ can be rescaled to satisfy $\Vx\Cct\Vx = 1$.
Complex eigenvectors with $\Vx\Tra\Vx = 0$ will not be found,
but, as just noted, these do not occur generically.
Any nonreal solutions are transformed to a representative of the eigenring
with positive real $\lambda$ by setting
\begin{displaymath}
 (\lambda, \Vx) \leftarrow \left(\frac{|\lambda|}{(\Vx\Cct\Vx)^{m/2-1}},\, \left(\frac{|\lambda|}{\lambda}\right)^{1/(m-2)}\! \frac{\Vx}{(\Vx\Cct\Vx)^{1/2}}\right).
\end{displaymath}
This polynomial system solution becomes prohibitively expensive for large $m$ or $n$; however, for \Ex{KoRe02_ex1}, the nonreal eigenpairs can be computed this way and are presented in \Tab{KoRe02_ex1_nonreal}.
Thus, from this and \Tab{KoRe02_ex1}, we have found the 13 eigenvalue equivalence classes (real and nonreal) guaranteed by \Thm{neigs}.
\begin{table}
  \centering
  \footnotesize
  \begin{tabular}{|c|c|} \hline
$\lambda$ & $\Vx\Tra$ \\ \hline
 $\phantom{-}0.6694$ & [ $\phantom{-}0.2930 + 0.0571 i$ $\phantom{-}0.8171 - 0.0365 i$ $-0.4912 - 0.0245 i$ ] \\  \hline
 $\phantom{-}0.6694$ & [ $\phantom{-}0.2930 - 0.0571 i$ $\phantom{-}0.8171 + 0.0365 i$ $-0.4912 + 0.0245 i$ ] \\  \hline
  \end{tabular}
  \caption{Nonreal eigenpairs for $\T{A} \in \RT{4}{3}$ from \Ex{KoRe02_ex1}.}
  \label{tab:KoRe02_ex1_nonreal}
\end{table}

\subsection{SS-HOPM for Complex Eigenpairs}

We propose an extension of the SS-HOPM algorithm to the
case of complex vectors in \Alg{csshopm}.
Observe that the division by $\lambda_k + \alpha$ keeps the
phase of $\Vx_k$ from changing unintentionally.  It is akin to taking
the negative in the concave case in \Alg{sshopm}. It is important to
note that even if an eigenpair
is real, there is no guarantee that the complex SS-HOPM will converge
to the real eigenpair; instead, it will converge to some random
rotation in the complex plane. We have no
convergence theory in the convex case, but we present several
promising numerical examples.

\begin{algorithm}
  \caption{Complex SS-HOPM}
  \label{alg:csshopm}
    Given a tensor $\TA \in \RT{m}{n}$.
  \begin{algorithmic}[1]
    \Require $\Vx_0 \in \Complex^n$ with $\| \Vx_0 \| = 1$. Let
    $\lambda_0 = \TA \Vx_0^{m}$.
    \Require $\alpha \in \Complex$
    \For{$k=0,1,\dots$}
    \State $\hat \Vx_{k+1} \gets (\TA \Vx_k^{m-1} + \alpha \Vx_k) / 
    (\lambda_k + \alpha)$
    \State $\Vx_{k+1} \gets \hat \Vx_{k+1} / \| \hat \Vx_{k+1} \|$
    \State $\lambda_{k+1} \gets  \Vx_{k+1}\Cct\TA\Vx_{k+1}^{m-1}$
    \EndFor
  \end{algorithmic}
\end{algorithm}

\begin{example}
  We once again revisit $\TA \in \RT{4}{3}$ from \Ex{KoRe02_ex1} and test the complex version
  of SS-HOPM in \Alg{csshopm}.  \Tab{eigenrings_pos} shows the
  results of 100 runs using the same experimental conditions as in
  \Ex{KoRe02_ex1} except with complex random starting vectors.
  We find 7 distinct eigenrings --- the 6 stable real
  eigenpairs as well as a ring corresponding to the 2 complex
  eigenpairs. 
  \Fig{eigenrings_pos} shows the individual $\lambda_*$ values plotted
  on the complex plane. As mentioned above, it may converge anywhere
  on the eigenring, though there is clear bias toward the
  value of $\alpha$.

  To investigate this phenomenon further, we do another experiment with
  $\alpha = -(1+i)/\sqrt{2}$. It finds the same eigenrings as before
  as shown in \Tab{eigenrings_imag}, but this time the
  $\lambda_*$ values are distributed mostly in the lower left quadrant of the
  complex plane as shown in \Fig{eigenrings_imag}, again close to the value
  of $\alpha$.  In the case of
  the 2 complex eigenpairs with the same eigenring, the method
  finds the 2 distinct eigenvectors (i.e., defining 2
  different equivalence classes) in the 4 different times it
  converges to that eigenvalue; this is not
  surprising since the complex eigenvalue has 2 different
  eigenvectors as shown in \Tab{KoRe02_ex1}.

  We also ran an experiment with $\alpha=0$. In this case, 95 trials
  converged, but to non-eigenpairs (all with $|\lambda| = 0.3656$). In
  each case, even though $\{\lambda_k\}$ converged, we had
  $\|\Vx_{k+1} - \Vx_k\| \rightarrow 1.1993$, indicating that the
  sequence $\{\Vx_k\}$ had not converged and hence we did not obtain an $\Vx_*$
  satisfying the eigenvalue equation \Eqn{EVP-COMPLEX}. Although it is
  not shown, in all the convergent examples with the shifts mentioned
  above, the $\{\Vx_k\}$ sequence converged.
\end{example}

\begin{table}[htbp]
  \centering
  \caption{Eigenrings computed for $\TA \in \RT{4}{3}$ from \Ex{KoRe02_ex1} by complex SS-HOPM with 100 random starts.}
  \subfloat[$\alpha = 2$.]{
    \label{tab:eigenrings_pos}
    \footnotesize
    \begin{tabular}{|c|c|}\hline
      %% Number of trials = 100
      %% Number of failures = 0
%% Number of trials = 100
%% Number of failures = 0
\# Occurrences & $|\lambda|$ \\ \hline
  18 & 1.0954  \\ \hline 
  18 & 0.8893  \\ \hline 
  21 & 0.8169  \\ \hline 
   1 & 0.6694  \\ \hline 
  22 & 0.5629  \\ \hline 
   8 & 0.3633  \\ \hline 
  12 & 0.0451  \\ \hline
    \end{tabular}
  }
  ~~~
  \subfloat[$\alpha = \sqrt{2}(1+i)$ (2 failures).]{
    \label{tab:eigenrings_imag}
    \footnotesize
    \begin{tabular}{|c|c|}\hline
%% Number of trials = 100
%% Number of failures = 2
\# Occurrences & $|\lambda|$ \\ \hline
  22 & 1.0954  \\ \hline 
  15 & 0.8893  \\ \hline 
  12 & 0.8169  \\ \hline 
   4 & 0.6694  \\ \hline 
  16 & 0.5629  \\ \hline 
   9 & 0.3633  \\ \hline 
  20 & 0.0451  \\ \hline 
    \end{tabular}
  }
\end{table}

\begin{figure}[htbp]
  \centering
  %TRIM LEFT BOTTOM RIGHT TOP
  \subfloat[$\alpha = 2$.]{
    \label{fig:eigenrings_pos}
    \includegraphics[width=0.33\textwidth,trim=20 5 20 0,clip]{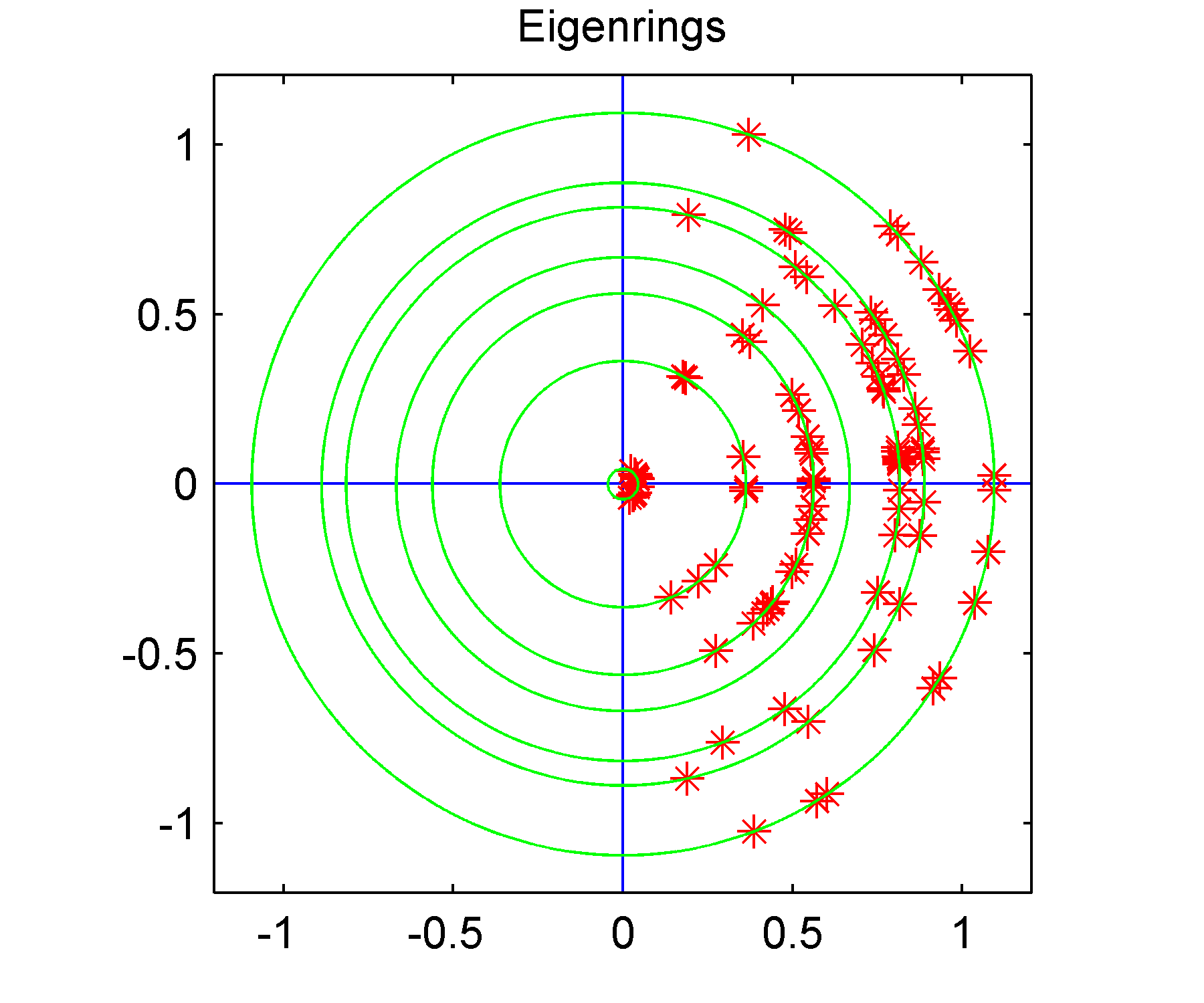}
  }
  \subfloat[$\alpha = -\sqrt{2}(1+i)$ (2 failures).]{
    \label{fig:eigenrings_imag}
    \includegraphics[width=0.33\textwidth,trim=20 5 20 0,clip]{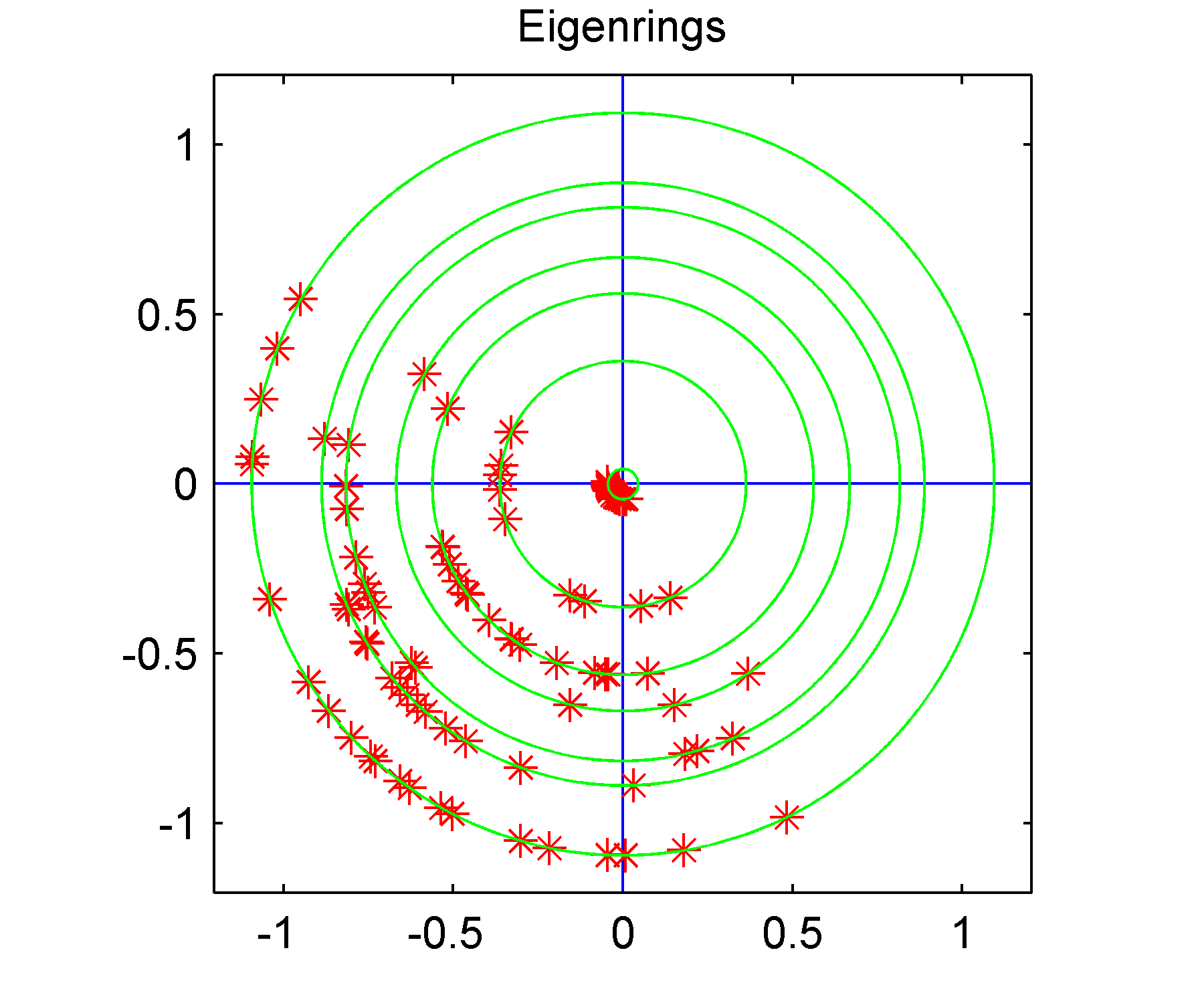}
  }
  \caption{For $\TA \in \RT{4}{3}$ from \Ex{KoRe02_ex1}, final $\lambda$ values (indicated by
  red asterisks) for 100 runs of complex SS-HOPM\@. The green lines
  denote the eigenrings.}
  \label{fig:complex}
\end{figure}

%% ----------------------------------------------------------------------
%% Conclusions
%% ----------------------------------------------------------------------
\section{Conclusions}
\label{sec:conclusions}
We have developed the shifted symmetric higher-order power method
(SS-HOPM) for finding tensor
eigenvalues. The method can be considered as a higher-order analogue
to the power method for matrices. Just as in the matrix case, it
cannot find all possible eigenvalues, but it is guaranteed to be able
to find the largest-magnitude eigenvalue. Unlike the matrix case, it
can find multiple eigenvalues; multiple starting points are typically needed to
find the largest eigenvalue.
A GPU-based implementation of SS-HOPM has been reported \cite{GPU}.

Building on previous work \cite{KoRe02,ReKo03,Er09}, we show that SS-HOPM will always converge to
a real eigenpair for an appropriate choice of $\alpha$. Moreover, using
fixed point analysis, we
characterize exactly which real eigenpairs can be found by the
method, i.e., those that are positive or negative stable. Alternative methods will need to be developed for finding the
unstable real eigenpairs, i.e., eigenpairs for which $C(\lambda,\Vx)$
is indefinite. A topic for future investigation is that the boundaries of the basins of
attraction for SS-HOPM seem to be defined by the non-attracting eigenvectors.

We present a complex version of SS-HOPM and limited experimental results
indicating that it finds eigenpairs, including complex
eigenpairs. Analysis of the complex version is a topic for future study.

Much is still unknown about tensor eigenpairs. For example, how do the
eigenpairs change with small perturbations of the tensor entries? 
Is there an eigendecomposition of a tensor? 
Can the convergence rate of the current method be accelerated? How
does one numerically compute unstable eigenpairs?
For computing efficiency, what is the optimal storage for
symmetric tensors? 
These are all potential topics of
future research.

%% ----------------------------------------------------------------------
%% Acknowledgments
%% ----------------------------------------------------------------------
\section*{Acknowledgments}
We thank Fangxiang Jiao and Chris Johnson (U. Utah), David Rogers
(Sandia), and Dennis Bernstein (U. Michigan) for contacting us with
interesting applications and providing test data. We thank Arindam
Banergee (U. Minnesota) for providing yet another motivating
application.
We thank Dustin Cartwright and Bernd Sturmfels (UC Berkeley) for
helpful discussions, 
especially about the number of eigenpairs for a problem of a given size.
We also thank our colleagues at Sandia for numerous helpful conversations
in the course of this work, especially Grey Ballad (intern from UC Berkeley).
% Jon Berry, Paul Boggs, Danny Dunlavy, Tim Trucano
%
We are grateful to the three anonymous referees for identifying
important references that we missed and for constructive
feedback that greatly improved the manuscript.

%% ----------------------------------------------------------------------
%% Bibliography
%% ----------------------------------------------------------------------
\opt{draft,siam}{
\bibliographystyle{siammod}
\bibliography{paper,allrefs}
}
\opt{arXiv}{

}

%% ----------------------------------------------------------------------
%% Appendix
%% ----------------------------------------------------------------------
\clearpage
\appendix
\setlength{\floatsep}{0pt}
\section{Further examples}
\label{sec:more}
For additional insight, we consider two analytical examples.  In the
experiments in this section, each random trial used a point $\Vx_0$
chosen from a uniform distribution on $[-1,1]^n$. We allow up to 1000
iterations and say that the algorithm has converged if $|\lambda_{k+1}
- \lambda_k | < 10^{-15}$.

Consider the tensor $\TA\in\RT{3}{3}$ whose entries are 0 except where 
the indices are all unequal, in which case the entries are 1, i.e., 
\begin{equation}\label{eq:simple1}
  \TE{a}{ijk} =
  \begin{cases}
    1 & \text{if $(i,j,k)$ is some permutation of $(1,2,3)$}, \\
    0 & \text{otherwise}.
  \end{cases}
\end{equation}
Any eigenpair $(\lambda,\Vx)$ must satisfy the following system of equations:
\begin{align*}
2x_2 x_2 &= \lambda x_1, &
2x_1x_3 &= \lambda x_2, &
2 x_1 x_2 &= \lambda x_3, &
x_1^2 + x_2^2 + x_3^2 & = 1.
\end{align*}
The 7 real eigenpairs can be computed analytically and are listed
in \Tab{simple1}, from which we can see that there are 4 negative 
stable eigenpairs that should be identifiable
by SS-HOPM\@.  \Fig{jac_simple1} shows the spectral radius of the
Jacobian as $\alpha$ varies; the curve is identical for all 4
negative stable eigenpairs.

Another example is given as follows. Define the tensor $\T{A} \in \RT{4}{2}$ by
\begin{equation}
  \label{eq:simple2}
  % a_{ijkl} = 
  % \begin{cases}
  %   1 & \text{if $i=j=k=l=1$}, \\
  %   -1 & \text{if $i=j=k=l=2$}, \\
  %   0 & \text{otherwise}.
  % \end{cases}
  a_{ijkl} = 0 \qtext{for all} i,j,k,l \qtext{except} a_{1111} = 1
  \qtext{and} a_{2222} = -1.
\end{equation}
The eigenpairs can be computed analytically as solutions to the
following system:
\begin{align*}
x_1^3 &= \lambda x_1, &
-x_2^3 &= \lambda x_2, &
x_1^2 + x_2^2 & = 1.
\end{align*}
The 2 real eigenpairs are listed in \Tab{simple2}, from which we can
see that one is negative stable and the other is positive stable.
\Fig{jac_simple1} shows the spectral radius of the Jacobian as
$\alpha$ varies. In this case, the spectral radius of the Jacobian can
be computed analytically; for $\lambda=1$, it is
$\frac{\alpha}{1+\alpha}$ and hence there is a singularity for
$\alpha=-1$.

\begin{table}[htbp]
  \centering
  \caption{Eigenpairs for two analytical problems.}
  \subfloat[Eigenpairs for $\T{A} \in \RT{3}{3}$ from \Eqn{simple1}.]{  
    \label{tab:simple1}
  \footnotesize
  \begin{tabular}{|c|c|c|c|c|} \hline
 $\lambda$ & $\Vx$ & eigenvalues of $C(\lambda,\Vx)$ & Type \\ \hline
 $0$  & [ $1$  $0$  $0$ ] &  $\{$  $-2$ ,  $2$  $\}$ & Unstable \\  \hline
 $0$  & [ $0$  $1$  $0$ ] &  $\{$  $-2$ ,  $2$  $\}$ & Unstable \\  \hline
 $0$  & [ $0$  $0$  $1$ ] &  $\{$  $-2$ ,  $2$  $\}$ & Unstable \\  \hline
 $2/\sqrt{3}$  & [ $\phantom{-}1/\sqrt{3}$  $\phantom{-}1/\sqrt{3}$ $\phantom{-}1/\sqrt{3}$ ] &  $\{$  $-2.3094$ ,  $-2.3094$  $\}$ & Neg. Stable \\  \hline
 $2/\sqrt{3}$  & [ $\phantom{-}1/\sqrt{3}$  $-1/\sqrt{3}$  $-1/\sqrt{3}$ ] &  $\{$  $-2.3094$ ,  $-2.3094$  $\}$  & Neg. Stable \\  \hline
 $2/\sqrt{3}$  & [ $-1/\sqrt{3}$  $\phantom{-}1/\sqrt{3}$  $-1/\sqrt{3}$ ] &  $\{$  $-2.3094$ ,  $-2.3094$  $\}$  & Neg. Stable \\  \hline
 $2/\sqrt{3}$  & [ $-1/\sqrt{3}$  $-1/\sqrt{3}$  $\phantom{-}1/\sqrt{3}$ ] &  $\{$  $-2.3094$ ,  $-2.3094$  $\}$  & Neg. Stable \\  \hline
  \end{tabular}
  }\\
  \subfloat[Eigenpairs for $\T{A} \in \RT{4}{2}$ from \Eqn{simple2}.]{
  \label{tab:simple2}
  \footnotesize
  \begin{tabular}{|c|c|c|c|c|} \hline
 $\lambda$ & $\Vx$ & eigenvalues of $C(\lambda,\Vx)$ & Type \\ \hline
 $1$  & [ $1$  $0$ ] &  $\{$  $-1$ $\}$ & Neg. Stable \\  \hline
 $-1$  & [ $0$  $1$ ] &  $\{$  $1$ $\}$ & Pos. Stable \\  \hline
  \end{tabular}
  }
\end{table}

\begin{figure}[htbp]
  \centering
  \subfloat[\Eqn{simple1} (All four are identical.) ]{
    \label{fig:jac_simple1}
  %TRIM LEFT BOTTOM RIGHT TOP
  \includegraphics[width=2.4in,trim=0 0 0 0,clip]{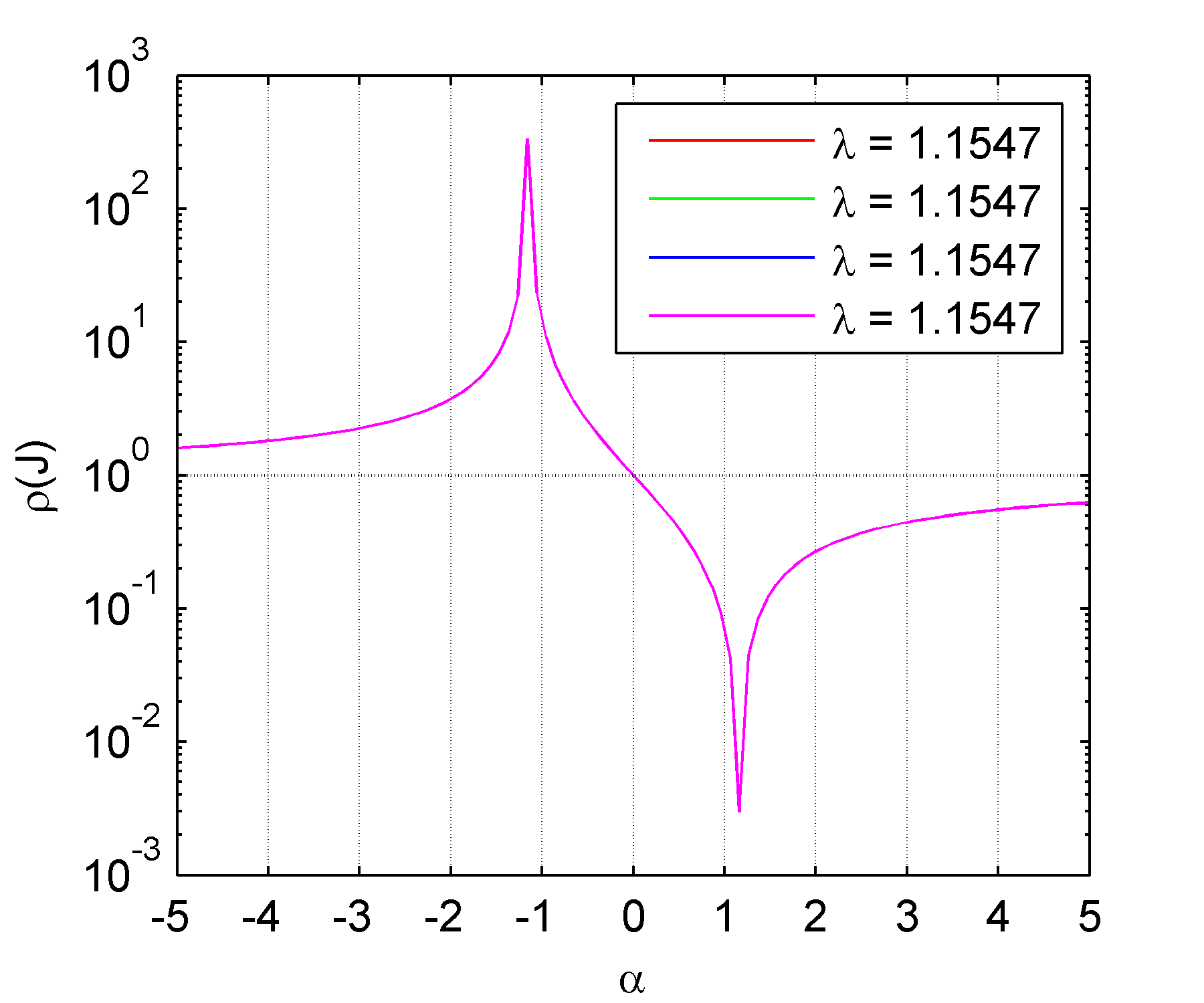}
  }
  \subfloat[\Eqn{simple2}]{  
    \label{fig:jac_simple2}
    \includegraphics[width=2.4in,trim=0 0 0 0,clip]{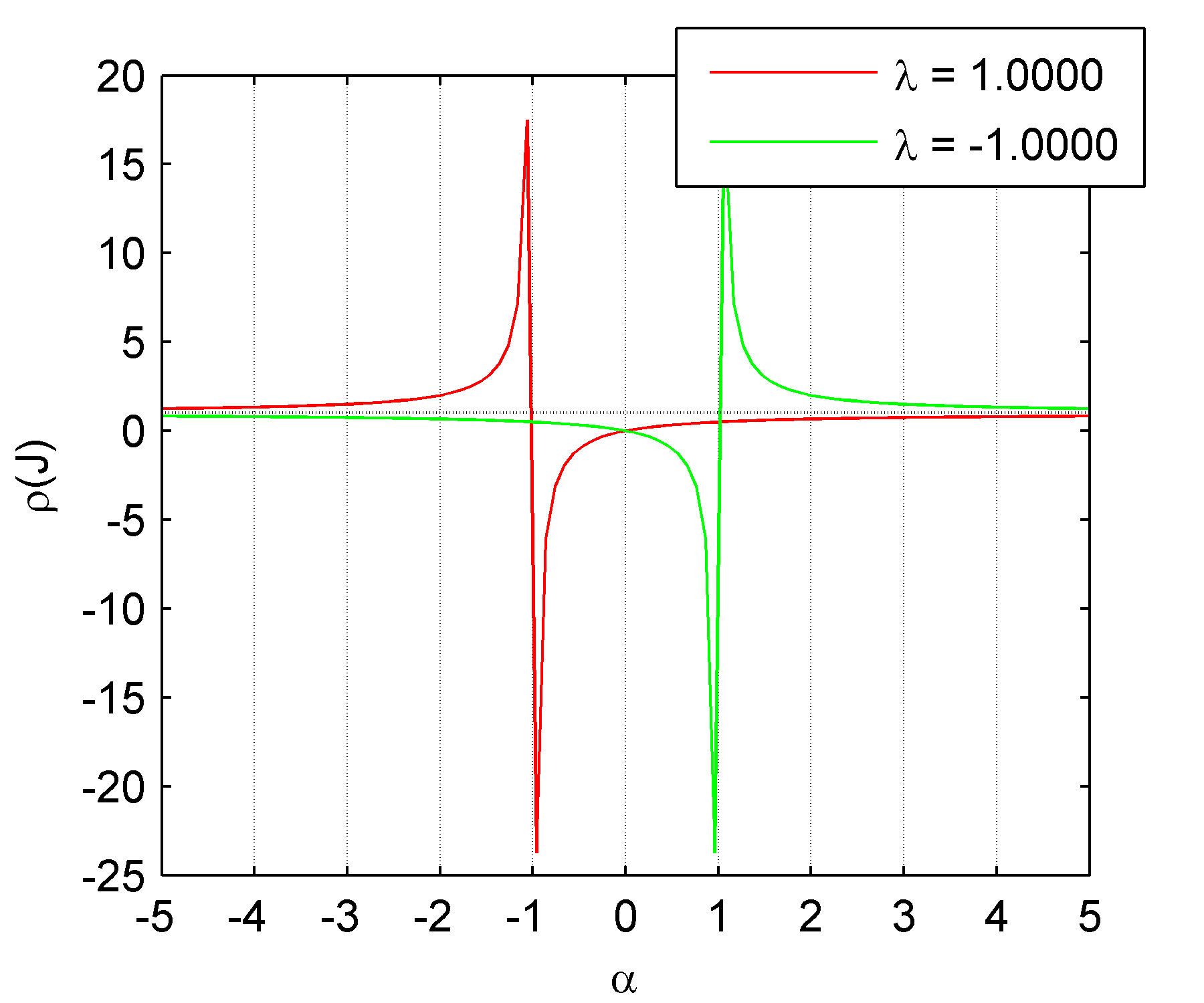} 
  }

  \caption{Spectral radii of the Jacobian $J(\Vx;\alpha)$ for
    different eigenpairs as $\alpha$ varies. }
\end{figure}

For \Eqn{simple1}, we ran 100 trials with $\alpha=0$, and none
converged, as expected per \Fig{jac_simple1}.  The results of
100 random trials with $\alpha=12$ (the ``conservative choice'') is
shown in \Tab{simple1-12}, in which case every trial converged to one
of the 4 negative stable eigenpairs. (Note that $2/\sqrt{3} \approx
1.1547$ and $1/\sqrt{3} \approx 0.5774$.)
\Tab{simple1-1} shows the results of 100 random trials with
$\alpha=1$. As expected (per \Fig{jac_simple1}), the convergence is
much faster.
\begin{table}[htbp]
  \centering
  \caption{Eigenpairs for $\TA\in\RT{3}{3}$  from \Eqn{simple1} 
    computed by SS-HOPM\@.}
  \label{tab:simple1-results}
  \subfloat[$\alpha=12$]{
    \label{tab:simple1-12}
    \footnotesize
    \begin{tabular}{|c|c|c|c|}
      \hline
      %% Number of trials = 100
      %% Number of failures = 0
      \# Occurrences & $\lambda$ & $\Vx$ & Median Its. \\ \hline
      22 &  $\phantom{-}1.1547$  & [ $-0.5774$  $\phantom{-}0.5774$  $-0.5774$ ] &   92 \\ \hline 
      18 &  $\phantom{-}1.1547$  & [ $\phantom{-}0.5774$  $\phantom{-}0.5774$  $\phantom{-}0.5774$ ] &   90 \\ \hline 
      29 &  $\phantom{-}1.1547$  & [ $-0.5774$  $-0.5774$  $\phantom{-}0.5774$ ] &   91 \\ \hline 
      31 &  $\phantom{-}1.1547$  & [ $\phantom{-}0.5774$  $-0.5774$  $-0.5774$ ] &   94 \\ \hline 
    \end{tabular}
  }\\
  \subfloat[$\alpha=1$]{
    \label{tab:simple1-1}
    \footnotesize
    \begin{tabular}{|c|c|c|c|}
      \hline
      %% Number of trials = 100
      %% Number of failures = 0
      \# Occurrences & $\lambda$ & $\Vx$ & Median Its. \\ \hline
      22 &  $\phantom{-}1.1547$  & [ $\phantom{-}0.5774$  $-0.5774$  $-0.5774$ ] &    9 \\ \hline 
      25 &  $\phantom{-}1.1547$  & [ $-0.5774$  $\phantom{-}0.5774$  $-0.5774$ ] &    9 \\ \hline 
      26 &  $\phantom{-}1.1547$  & [ $\phantom{-}0.5774$  $\phantom{-}0.5774$  $\phantom{-}0.5774$ ] &    9 \\ \hline 
      27 &  $\phantom{-}1.1547$  & [ $-0.5774$  $-0.5774$  $\phantom{-}0.5774$ ] &    9 \\ \hline 
    \end{tabular}
  }  
\end{table}
For \Eqn{simple2}, we ran 100 trials with $\alpha=0.5$
(\Tab{simple2-p}) and 100 trials with $\alpha=-0.5$
(\Tab{simple2-n}). We find the negative stable and positive stable
eigenvalues as expected.

\begin{table}[h]
  \centering
  \caption{Eigenpairs for $\TA\in\RT{4}{2}$  from \Eqn{simple2} 
    computed by SS-HOPM\@.}
  \label{tab:simple2-results}
  \subfloat[$\alpha=0.5$]{
    \label{tab:simple2-p}
    \footnotesize
    \begin{tabular}{|c|c|c|c|}
      \hline
%% Number of trials = 100
%% Number of failures = 0
\# Occurrences & $\lambda$ & $\Vx$ & Median Its. \\ \hline
 100 &  $\phantom{-}1.0000$  & [ $-1.0000$  $\phantom{-}0.0000$ ] &   16 \\ \hline 
    \end{tabular}
  }\\
  \subfloat[$\alpha=-0.5$]{
    \label{tab:simple2-n}
    \footnotesize
    \begin{tabular}{|c|c|c|c|}
      \hline
%% Number of trials = 100
%% Number of failures = 0
\# Occurrences & $\lambda$ & $\Vx$ & Median Its. \\ \hline
 100 &  $-1.0000$  & [ $\phantom{-}-0.0000$  $\phantom{-}1.0000$ ] &   16 \\ \hline 
    \end{tabular}
  }  
\end{table}

\end{document}